\newtheorem*{remark 1}{Remark}
\newtheorem{lemma}{Lemma}
\newtheorem{theorem}{Theorem}
\newtheorem{corollary}{Corollary}
\newtheorem{proposition}{Proposition}
\newtheorem{remark}{Remark}
\newcommand{\abs}[1]{\left|#1\right|}
\DeclarePairedDelimiter{\floor}{\lfloor}{\rfloor}
\newcommand{\res}{\text{Res}}
\numberwithin{equation}{section}
\newcommand{\RR}{\mathbb{R}}
\newcommand{\R}{\mathbb{R}}
\newcommand{\Z}{\mathbb{Z}}
\newcommand\<{\langle}\renewcommand\>{\rangle}
\newcommand\nc{\newcommand}
\nc\hd{\widehat{D}}
\nc\be{\begin{equation}}
\nc\ee{\end{equation}}
\nc\kp{\kappa}
\DeclareMathSymbol{\Gamma}{\mathalpha}{letters}{"00}
\DeclareMathSymbol{\Theta}{\mathalpha}{letters}{"02}
\DeclareMathSymbol{\Lambda}{\mathalpha}{letters}{"03}
\DeclareMathSymbol{\Omega}{\mathalpha}{letters}{"0A}
\definecolor{cr}{rgb}{1,0,0}
\newcommand{\supp}{\operatorname{supp}} 
\title{Effective dynamics of the nonlinear Schr\"odinger equation on large domains}
\author{T. Buckmaster, P. Germain, Z. Hani, J. Shatah}
\begin{document}

\begin{abstract}
We consider the nonlinear Schr\"odinger (NLS) equation posed on the box $[0,L]^d$ with periodic boundary conditions. The aim is to describe the long-time dynamics by deriving \emph{effective equations} for it when $L$ is large and the characteristic size $\epsilon$ of the data is small. Such questions arise naturally when studying dispersive equations that are posed on large domains (like water waves in the ocean), and also in theory of statistical physics of dispersive waves, that goes by the name of ``wave turbulence". Our main result is deriving a new equation, the \emph{continuous resonant} (CR) equation, that describes the effective dynamics for large $L$ and small $\epsilon$ over very large time-scales. Such time-scales are well beyond the (a) nonlinear time-scale of the equation, and (b) the Euclidean time-scale at which the effective dynamics are given by (NLS) on $\RR^d$. The proof relies heavily on tools from analytic number theory, such as  a relatively modern version of the  Hardy-Littlewood circle method, which are modified and extended to be applicable in a PDE setting.
\end{abstract}

\maketitle


\section{Introduction}

\subsection{The nonlinear Schr\"odinger equation on the torus}  The first fundamental question, beyond well-posedness, in the study of nonlinear dispersive equations is the long time dynamics of small amplitude solutions. There is a large body of work when the domain is $\mathbb{R}^n$,  where for most equations, the asymptotic behavior of small solutions is well-understood \cite{TaoBook, CazenaveBook}. However, the situation is markedly different on bounded domains, where solutions, even for small initial data, exhibit rich and complicated  dynamical behaviors. These can range from quasi-periodic motion \cite{ProcesiProcesi} to solutions that exhibit a very vigorous departure from linear behavior, like energy cascades between different length-scales.

The  purpose of this manuscript is to study  the long time dynamics of the nonlinear Schr\"odinger equation
\begin{equation}\label{pNLS}
\left\{ \begin{array}{l}
- i \partial_t v + \frac{1}{2\pi} \Delta v =\pm |v|^{2p} v, \qquad   x \in  \mathbb{T}^n_L ,\;   p\in \mathbb{N} \\
v(t=0) = \epsilon v_0,
\end{array} \right.
\end{equation}
where  $\mathbb{T}^n_L$ is the torus of size $L$, i.e., the box $[0,L]^n$ with periodic boundary conditions.

Setting for the moment $p= \epsilon = L = 1$, local well-posedness for smooth data trivially holds.  However, Bourgain~\cite{bourgain_fourier_1993} in a foundational work,  obtained local well-posedness  for
data in  $H^{\frac{n}{2}-\frac{1}{p}+}$ (slightly smoother than the scale invariant space).  Combining this local result with the conserved quantities of the equation leads directly to global solutions for $n=1,2,3$ in the defocusing case, and for small data in the focusing case. Similar global results for were later obtained in the energy-critical case (see \cite{HerrTatTzv, Bourgain2013, IoPa}). This raises the question: what are the qualitative features of these global solutions? It is expected that for generic initial data, solutions will exhibit a behavior that is markedly different than the linear one (no matter how small the size of the data is!). An example of such behavior is the energy cascade phenomenon, in which solutions transfer their energy to higher and higher Fourier modes. This can be measured by inspecting the behavior of high Sobolev norms, in which case one expects to find an abundance of solutions which satisfy for $s>1$,  $\limsup_{t \to \infty} \|u(t)\|_{H^s} = \infty$. Although this has not been proven for any solution on the torus $\mathbb{T}^n$ (though see \cite{HPTV}), there are solutions which exhibit arbitrarily large growth factor~\cite{CKSTT,GK,Hani,HausProc, GuaHauProc}.

It should be mentioned that the intuition for energy cascades is highly motivated by the theory of statistical physics of nonlinear dispersive waves, namely \emph{``wave or weak turbulence"} \cite{ZLF,Nazarenko},\footnote{The name \emph{weak turbulence} stems from the focus of this theory on small solutions, and hence weak nonlinearities.}
Unfortunately, this theory is, so far, lacking mathematical foundations (see~\cite{LS} for the best result in that spirit). The most striking element of the theory of wave turbulence, is the derivation of a kinetic model which should describe large time dynamics. The derivation of this kinetic equation is performed under a randomness assumption, and in the limit where $\epsilon \to 0$, $L \to \infty$.  This was one motivation for the central aim of this article: describe the long time dynamics of $u$, on a time-scale $T\to \infty$,  as $\epsilon \rightarrow 0$ (weak nonlinearity) and as $L \rightarrow \infty$ (big box limit, or, up to rescaling, high frequency limit).

\subsection{Effective dynamics on large domains} Another motivation to study the long-time behavior in the large box limit, is to understand the various regimes and effective dynamics that are featured by dispersive systems on large domains (e.g. water waves equation posed on the ocean surface). Indeed, for such systems, one is often tempted, for modeling and mathematical reasons, to find simplified equations \emph{posed on $\RR^n$} that approximate the real dynamics when $L$ is very large. The first and most intuitive such simplification, is to study the same equation on $\RR^n$. We call this the \emph{Euclidean approximation}. However, one soon notices that this approximation has its limitations, namely it is only valid in situations and over time-scales for which the solution does not feel the effect of the boundary of the domain. 

To explain this more precisely, let us consider again our equation \eqref{pNLS}. We first identify two important time-scales for the dynamcis:

\underline{The nonlinear time-scale $T_{nl}$}: this is the time needed for the nonlinearity to have an effect of size $O(1)$. It is easy to see that for initial data of size $\epsilon$ and a nonlinearity of degree $2p+1$, the nonlinear time scale is $T_{nl}\sim \epsilon^{-2p}$.  Therefore keeping $L$ fixed and letting $\epsilon \to 0$,  we have: {\it 1)}  for  $t\ll T_{nl}$  the dynamics of $u$ is given (to leading order) by the linear Schr\"odinger equation on $\mathbb T^n_L$;  and  {\it 2)} for  $t= O(T_{nl})$  the dynamics is given by the nonlinear Schr\"odinger equation on $\mathbb T^n_L$.

\underline{The Euclidean time-scale $T_{\mathcal E}$}: this is the time needed for the solution to be affected by the geometry of the domain $\mathbb{T}^n_L$. Since at the linear level, wave packets at frequency scale~$\sim 1$ move at speed~$\sim 1$, then one can heuristically argue that a scale~$\sim 1$ wave packet, initially localized, would take time $O(L)$ to wrap up the torus; therefore $T_{\mathcal E}\sim L$.  Thus for $\epsilon$ fixed and $L$  large, the dynamics of $u$ is well approximated for $t\ll T_{\mathcal E}$ by the  nonlinear Schr\"odinger equation on $\mathbb R^n$.

This leads to the main question of the present work:

\smallskip 
{\it  What happens after the Euclidean approximation breaks, i.e., $T\gg L$ and after the nonlinear effects take place, i.e.,  $T \gg \epsilon^{-2p}$? Is there an equation that describes the  dynamics of solutions when $\epsilon$ is small and  $L$ is large?}

\subsection{The resonant time-scale and rough statement of results} The purpose of this paper is to contribute to the answer to the above open-ended question. Roughly speaking, our result can be stated as follows: There exists another time-scale $T_R$, that we call the \emph{resonant time-scale}, that is much longer than both $ T_{nl}$ and $T_{\mathcal E}$, and over which we can still describe the effective dynamics by an equation on $\RR^n$. We call this equation the \emph{Continuous Resonant} (CR) equation. The rigorous derivation of this equation, along with proving how it approximates the (NLS) dynamics in the limit of large $L$, constitutes the bulk of this paper. In a companion paper \cite{BGHS}, we will analyze this equation and study more of its properties.

Let us start by giving a formal derivation of this equation. First, for simplicity of the presentation, we set $p=1$  and choose the defocusing $+$ sign of the nonlinearity in \eqref{NLS} (both of which have little role in what follows). We start with an ansatz $v =\epsilon u$ to emphasize the size of the initial data under consideration. The equation satisfied by $u$ is given by 
\begin{equation}
\label{NLS}
\boxed{ \tag{NLS} 
-i \partial_t u + \frac{1}{2\pi} \Delta u = \epsilon^{2} |u|^2 u, \qquad \mbox{$x \in \mathbb{T}^n_L$}.
}
\end{equation}
Expand $u$ in its Fourier coefficients, $u(t,x) = \frac{1}{L^n} \sum_{K \in \mathbb{Z}^n_L} \widehat{u}_K(t) e(K \cdot x),
$
where $K \in \mathbb{Z}^n_L = \left( \frac{\mathbb{Z}}{L} \right)^n$ and $e(\alpha) = \exp(2\pi i \alpha)$, and  define $a_K(t) = e(-|K|^2 t) \widehat{u}_K(t)$. The equation satisfied by $a_K$ reads
\begin{equation}\label{eq:profile}
- i \partial_t a_K = \frac{\epsilon^2}{L^{2n}}\sum_{\mathcal{S}_3(K) =0}  
a_{K_1} \overline{a_{K_2}} a_{K_3} e(\Omega_3(K)t),
\end{equation}
where $K_i\in  \mathbb{Z}^n_L$, for $i=1, 2, 3$,  and
\begin{align*}
& \mathcal{S}_3(K) = K_1 - K_2 + K_3 - K,\\
& \Omega_3(K) = |K_1|^2 - |K_2|^2 + |K_3|^2 - |K|^2. 
\end{align*}
We  split the nonlinear terms into resonant and non-resonant interactions:
\begin{equation}
\label{resornonres}
- i \partial_t a_K = \underbrace{\frac{\epsilon^2}{L^{2n}}\sum_{\substack{\mathcal{S}_3(K) =0\\ \Omega_3(K)=0}} a_{K_1} \overline{a_{K_2}} a_{K_3}}_{\mbox{resonant interactions}} + \underbrace{\frac{\epsilon^2}{L^{2n}}\sum_{\substack{\mathcal{S}_3(K) =0 \\ \Omega_3(K)\neq 0}} a_{K_1} \overline{a_{K_2}} a_{K_3} e(\Omega_3(K)t)}_{\mbox{non-resonant interactions}},
\end{equation}
and prove the following:
%

{\it 1)}  For $\epsilon$ sufficiently small, the non-resonant interactions become dynamically irrelevant, and   the dynamics of small solutions are  well-approximated by the resonant terms only. The proof of this requires the use of normal form transformations. Eliminating the cubic non-resonant terms by a normal forms transformation justifies this approximation under the restrictive condition $\epsilon< L^{-1-}$  (cf. \cite{FGH}).  In this paper, we eliminate non-resonant terms up to an arbitrary large (but finite) degree. This allows us to justify this resonant approximation under the mild condition $\epsilon< L^{-\gamma}$ for arbitrary small $\gamma>0$. This is done in Section \ref{sec:NF-section}.  The upshot is that effectively, the dynamics of $a_K(t)$ are given by
\[
- i \partial_t a_K = \frac{\epsilon^2}{L^{2n}}\mathcal{T}_L(a,a,a) \qquad \mbox{where  }\,\, \mathcal{T}_L(a,a,a) = \sum_{\substack{\mathcal{S}_3(K) =0\\ \Omega_3(K)=0}} a_{K_1} \overline{a_{K_2}} a_{K_3}~.
\]

{\it 2)}  For $L$ sufficiently large, one can approximate the resonant sum above by an integral in a manner  similar to how Riemann sums are approximated by integrals. However, since the sum is over a   set on integers  $(K_1, K_2, K_3)\in (\Z^n_L)^3$  which are the zeros of the quadratic form $\Omega_3$,  this leads to deep problems in analytic number theory. 

\underline{The Circle Method.} Our main tool to rigorously perform the approximation mentioned above and to estimate the resonant sum is based on recent improvements of the Hardy-Littlewood circle method (e.g.\ smooth versions of Kloosterman's leveling method as presented by Hooley \cite{Hooley} and by  Duke, Friedlander, and Iwaniec  \cite{DFI}).  Our treatment of  the circle method,   which is presented in Section \ref{sec:circle},   follows closely the work of Heath-Brown \cite{HB}. However, the results in \cite{HB} are not applicable to the PDE problem we are considering here.  Namely we need to estimate functions defined as weighted lattice sums.  More specifically, we consider,
\[
F(x) = \sum_{Q(K)=0} W\left(\mathcal{L}(x,K)\right),
\]
where $Q$ is a quadratic form and $\mathcal L$ is a linear function, and ask for {\it 1)} boundedness of the map $W\to F$, in certain function spaces;  and {\it 2)} the convergence of $F$ (in  function space) as $L\to \infty$.  In addition the function $W$ is neither compactly supported nor does it vanish when $Q(K)=0$.

Using the circle method, we prove that  there exists a normalization factor $Z_n(L)$, such that the resonant sum converges to an explicit integral operator $\mathcal{T}$ as follows: If $f$ is sufficiently smooth and decaying, then 
$$
\frac{1}{Z(L)} \mathcal{T}_L(f,f,f) \overset{L \to \infty}{\longrightarrow} \mathcal{T}(f,f,f) \quad \text{with}\quad Z_n(L) = \left\{
\begin{array}{ll}
\frac{1}{\zeta(2)} L^2 \log L & \mbox{if $n=2$} \\
\frac{\zeta(n-1)}{\zeta(n)} L^{2n-2} & \mbox{if $n \geq 3$}
\end{array} \right.
$$
where $\zeta$ is the Riemann zeta function.  The operator $\mathcal T$ is an integral over the set  $\mathcal S(K) =0$ and $\Omega(K)=0$,
\[
\mathcal{T}(f,f,f)(K) = \int f(K_1) \bar f(K_2)  f(K_3)\delta_{\R^n}(\mathcal{S}_3(K))\delta_{\R}(\Omega_3(K))\, dK_1dK_2dK_3.
\]
 These results are presented in Sections  \ref{sec:res-bound}  and \ref{sec:asymp}.

Consequently the limiting dynamics of $a_K$ (up to rescaling time by a factor $\frac{L^{2n}}{Z_n(L) \epsilon^2}$) is given by the ``Continuous Resonant" equation
\begin{equation}
\label{CR}
\boxed{
\tag{CR}
- i \partial_t g(t, \xi) = \mathcal{T}\left(g(t, \cdot),g(t, \cdot),g(t, \cdot)\right) (t, \xi) \qquad \xi \in \mathbb R^n,
}
\end{equation}
provided   $\epsilon< L^{-\gamma}$ for arbitrary small $\gamma>0$. This equation is studied in the companion paper~\cite{BGHS}.

For general $p\in \mathbb{N}, n \geq 1$ with $np\geq 2$, and again by using the circle method, we can show that the resonant sum converges to an integral operator with a normalization factor 
\[
Z_{n,p}(L) = \left\{
\begin{array}{ll}
\frac{1}{\zeta(2)} L^2 \log L & \mbox{if $np=2$} \\
\frac{\zeta(np-1)}{\zeta(np)} L^{2pn-2} & \mbox{if $np \geq 3$}
\end{array} \right.
\]
Thus the term $\log L$ appears when $n=1$ and $p=2$, or when $n=2$ and $p=1$.  The case of general $p\in \mathbb N$ will be presented in Section \ref{section_general_p}.

\subsection{Main results}  Again, for ease of notation and presentation,  we first present our results for $p=1$. For the general case, i.e.,  $p\in\mathbb{N}$, the results and sketch of the modifications of the proofs are presented in Section \ref{section_general_p}. 

The notations needed  for the statements of the theorems  below are given at the end of the introduction.   The first theorem gives the time-scale and rate of convergence of solutions  of the~\eqref{NLS} to those of the~\eqref{CR} equation.


\begin{theorem}\label{approx thm1}
Fix $\ell> 2n$ and $0<\gamma<1$. Let $g_0\in X^{\ell+n+2, 3n+3}(\RR^n)$, and suppose that $g(t,\xi)$ is a solution of~\eqref{CR} over a time interval $[0, M]$ with initial data $g_0=g(t=0)$. Denote by
$$
B\overset{def}{=}\sup_{t\in [0, M]}\|g(t)\|_{X^{\ell+n+2, 3n+3}(\RR^n)}.
$$
Let $u$ be the solution of~\eqref{NLS} with initial data $u_0 = \frac{1}{L^n} \sum_{\mathbb{Z}^n_L} g_0(K) e(K\cdot x)$, and set for $K \in \mathbb{Z}^n_L$
$$
a_K(t) = \widehat{u}_K(t) e(-|K|^2 t).
$$
Then for $L$ sufficiently large, and  $\epsilon^2L^\gamma$ sufficiently small, there exists  a  constant
$C_{\gamma, M, B}$ such that or all $t\in [0, MT_R]$,
\begin{equation} \label{tildeak} 
\left\| a_K(t)-g\left(\frac{t}{T_R},K\right)\right\|_{X^\ell(\mathbb{Z}_L^n)}\lesssim C_{\gamma, M, B} \left( \delta(L) +\epsilon^2 L^\gamma \right) ,
\end{equation}
where 
\begin{equation}\label{eq: Z_n}
T_R=\frac{L^{2n}}{\epsilon^2 Z_n(L)}, \qquad
Z_n(L)= \begin{cases} L^{2n-2} \frac{\zeta\left(n-1\right)}{\zeta\left(n\right)} \qquad &\text{ if }n \geq 3,\\
					L^{2n-2} \frac{\log L}{\zeta(2)}\qquad &\text{ if }n =2.
		\end{cases}
\end{equation}
and
\begin{equation}
\delta(L)=
\begin{cases} L^{-1 +\gamma} \qquad &\text{ if }n\geq 3\\
(\log L )^{-1} \qquad &\text{ if } n=2.
\end{cases}
\end{equation}
\end{theorem}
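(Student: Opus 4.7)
\emph{Normal form reduction and time rescaling.} I would start from equation~\eqref{eq:profile} for the profile $a_K$ and invoke the normal form construction of Section~\ref{sec:NF-section}, which eliminates non-resonant interactions up to arbitrarily high order provided $\epsilon^2 L^\gamma\ll 1$. This produces a new variable $\tilde a$, close to $a$ in the sense that $\|\tilde a-a\|_{X^\ell(\Z^n_L)}\lesssim \epsilon^2 L^\gamma$, satisfying
\begin{equation*}
-i\partial_t \tilde a_K=\frac{\epsilon^2}{L^{2n}}\,\mathcal{T}_L(\tilde a,\tilde a,\tilde a)(K)+\mathcal{R}_K(\tilde a),
\end{equation*}
where the remainder $\mathcal{R}_K$ has arbitrarily high polynomial degree in $\tilde a$ and, after integration over $[0,MT_R]$, contributes at most $(\epsilon^2 L^\gamma)^N$ times a power of $B$, for any fixed $N$. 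Rescaling via $\tau=t/T_R$ with $T_R$ as in~\eqref{eq: Z_n} turns the principal term into $\frac{1}{Z_n(L)}\mathcal{T}_L(\tilde a,\tilde a,\tilde a)$, the discrete lattice analogue of the (CR) nonlinearity.

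\emph{Splitting the error via the circle method.} Set $E_K(\tau):=\tilde a_K(\tau T_R)-g(\tau,K)$ for $K\in\Z^n_L$; by the choice of $u_0$, $E(0)$ is negligible. Subtracting the rescaled equation from (CR) yields
\begin{equation*}
-i\partial_\tau E_K=\frac{1}{Z_n(L)}\bigl[\mathcal{T}_L(\tilde a,\tilde a,\tilde a)-\mathcal{T}_L(g,g,g)\bigr](K)+\mathcal{C}(\tau,K)+\widetilde{\mathcal{R}}_K,
\end{equation*}
where $\mathcal{C}(\tau,K):=\frac{1}{Z_n(L)}\mathcal{T}_L(g,g,g)(K)-\mathcal{T}(g,g,g)(K)$ and $\widetilde{\mathcal{R}}_K$ is the rescaled normal-form remainder. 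The first bracket is trilinear and, by the uniform-in-$L$ boundedness of $\frac{1}{Z_n(L)}\mathcal{T}_L$ on $X^\ell(\Z^n_L)$ provided by Section~\ref{sec:res-bound}, is controlled in $X^\ell$ by $\bigl(\|\tilde a\|_{X^\ell}^2+\|g\|_{X^\ell}^2\bigr)\|E\|_{X^\ell}\lesssim B^2\|E\|_{X^\ell}$. For the consistency error $\mathcal{C}$ I would invoke the quantitative sum-to-integral convergence proved in Section~\ref{sec:asymp} to obtain
\begin{equation*}
\|\mathcal{C}(\tau,\cdot)\|_{X^\ell(\Z^n_L)}\lesssim \delta(L)\,\|g(\tau)\|_{X^{\ell+n+2,3n+3}(\RR^n)}^3\leq \delta(L)\,B^3,
\end{equation*}
which is precisely the reason the stronger norm on $g_0$ is assumed in the hypothesis.

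\emph{Gronwall closure and main obstacle.} Assembling these estimates gives the differential inequality $\partial_\tau\|E\|_{X^\ell}\lesssim B^2\|E\|_{X^\ell}+\delta(L)B^3+\epsilon^2 L^\gamma$, and Gronwall on $\tau\in[0,M]$ yields the announced bound~\eqref{tildeak} with $C_{\gamma,M,B}\sim e^{CB^2M}$. A mild bootstrap is needed to keep $\|\tilde a\|_{X^\ell}$ comparable to $B$ throughout, but it is automatic since $\|E\|_{X^\ell}\ll B$ once the right-hand side of~\eqref{tildeak} is small. The main obstacle of the theorem is not this final energy step but the consistency estimate for $\mathcal{C}$: quantifying how fast the resonant lattice sum $\frac{1}{Z_n(L)}\mathcal{T}_L$ approaches the continuous operator $\mathcal{T}$ in the $X^\ell$ topology, with explicit rate $\delta(L)$, requires the full circle-method machinery of Sections~\ref{sec:circle}--\ref{sec:asymp}; once those inputs are in place, the rest of the proof is relatively standard PDE perturbation analysis on the long time scale $[0,MT_R]$.
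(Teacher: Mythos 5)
Your proposal follows essentially the same route as the paper's proof: a high-order normal form transformation to eliminate non-resonant terms, then a comparison argument for the error $E_K$ combining the uniform $X^\ell$-boundedness of $\tfrac{1}{Z_n(L)}\mathcal{T}_L$ from Theorem~\ref{th:disbnd}, the quantitative sum-to-integral convergence of Theorem~\ref{th:asymptotics}, and a Gronwall-with-bootstrap closure on $[0,MT_R]$. One minor imprecision worth flagging: after the normal form the remainder is not of arbitrarily high degree -- equation~\eqref{eq:vnf} retains \emph{resonant} terms of degrees $5,\dots,2P-1$ (the paper's term $II$), and after the time rescaling these contribute at order $\epsilon^2 L^\gamma$, not $(\epsilon^2 L^\gamma)^N$; only the truncation term of degree $2P+1$ (the paper's $III$) is made small by choosing $P$ large depending on $\gamma$. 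Since your Gronwall inequality already carries an $\epsilon^2 L^\gamma$ source term, the conclusion is unaffected, but the stated justification for the remainder bound should be corrected accordingly.
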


A few remarks are in order. Firstly, the (CR) equation is locally well-posed in the spaces $X^{\ell, N}$ for any $\ell > d-1$. This is proved in the companion paper~\cite{BGHS}. Secondly, when $n=2$, this result was first proved in \cite{FGH} under the restriction $\epsilon< L^{-1-\gamma}$. Note that in this case the rate of convergence is $(\log L)^{-1}$.  Here we improve this result  to obtain polynomial rate in  $L$.   This is accomplished by  using the circle method to identify  the logarithmically decaying correction term, which allows to state an approximation result with polynomially decaying error term in $L$. Thus we have the following:

\begin{theorem}\label{approx thm2}
Let $n=2$. Fix $\ell>4 $ and $0<\gamma<1$. Let $g_0\in X^{\ell+6, 15}(\mathbb{R}^2)$ and suppose that $g(t,\xi)$ is a solution of 
\begin{equation}\label{modified CR}
-i\partial_t g(t,\xi)=\mathcal T(g,g,g) +\frac{\zeta(2)}{\log L} \mathcal C(g, g, g)
\end{equation}
over a time interval $[0, M]$ with initial data $g_0=g(t=0)$, and denote by
$$
B\overset{def}{=}\sup_{t\in [0, M]}\|g(t)\|_{X^{\ell+6, 15}(\mathbb{R}^2)}.
$$
Let $u$ be the solution of~\eqref{NLS} with initial data $u_0 = \frac{1}{L^2} \sum_{\mathbb{Z}^2_L} g_0(K) e(Kx)$, and set for $K \in \mathbb{Z}^2_L$
$$
a_K(t) = \widehat{u}_K(t) e(-|K|^2 t).
$$
Then for $L$ sufficiently large, and  $\epsilon^2L^\gamma$ sufficiently small, there exists  a  constant
$C_{\gamma, M, B}$ such that or all $t\in [0, MT_R]$,
\begin{equation}\label{tildea_K and g d=2} 
\begin{split}
\left\| a_K(t)- g\left(\frac{t}{T_R},K\right)\right\|_{X^\ell(\mathbb{Z}_L^2)}\lesssim&  C_{\gamma, M, B} \left(\frac{1}{L^{1/3-\gamma}}+\epsilon^2  L^\gamma \right),
\end{split}
\end{equation}
where $T_R\overset{def}{=}\frac{\zeta(2) L^{2}}{\epsilon^2 \log L}$.
\end{theorem}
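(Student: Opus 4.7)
The plan is to run the same strategy as Theorem~\ref{approx thm1}, but with a sharper asymptotic for the resonant sum $\mathcal{T}_L$ in the critical dimension $n=2$, which identifies the logarithmically decaying correction and allows us to absorb it into the target equation.

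First, I would invoke the normal form reduction from Section~\ref{sec:NF-section} verbatim. As in the proof of Theorem~\ref{approx thm1}, this produces an approximately resonant profile $\tilde a_K$ obeying $-i\partial_t \tilde a_K = (\epsilon^2/L^{2n})\mathcal{T}_L(\tilde a,\tilde a,\tilde a) + R_K$ with $\|R\|_{X^\ell(\Z_L^2)} \lesssim \epsilon^2 L^\gamma$. This step is dimension-independent and is exactly what produces the $\epsilon^2 L^\gamma$ term in \eqref{tildea_K and g d=2}; no modification is needed here.

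Second, and this is the heart of the improvement, I would sharpen the asymptotic of Section~\ref{sec:asymp} when $n=2$ to a \emph{two-term} expansion
\[
\mathcal{T}_L(f,f,f)(K) \;=\; \tfrac{L^2\log L}{\zeta(2)}\,\mathcal{T}(f,f,f)(K) + L^2\,\mathcal{C}(f,f,f)(K) + E_L(f,f,f)(K),
\]
where $\mathcal{C}$ is a new bounded trilinear operator on $X^{\ell,N}(\R^2)$ for $N$ large enough, and the remainder satisfies $\|E_L(f,f,f)\|_{X^\ell(\Z_L^2)} \lesssim L^{5/3+\gamma}\|f\|_{X^{\ell+6,15}(\R^2)}^3$. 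The leading logarithm arises because the number of lattice points on the variety $\{\mathcal{S}_3=0,\ \Omega_3=0\}$ has a logarithmic divergence at large radii in two dimensions. Through the circle method one writes $\mathcal{T}_L$ as an integral on the Farey arcs and tracks \emph{both} the dominant major-arc contribution (yielding the $\log L$ term, as already done in Section~\ref{sec:asymp}) and the first subleading major-arc contribution (yielding the polynomial correction $\mathcal{C}$), with minor-arc and off-diagonal bounds providing the polynomial remainder $E_L$.

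Third, I would close the argument by comparing $\tilde a_K$ to $g(t/T_R,K)$ via a Gronwall stability estimate in $X^\ell(\Z_L^2)$. In the rescaled time $\tau = t/T_R$, substituting the refined expansion into the equation for $\tilde a_K$ and normalizing by $1/Z_2(L) = \zeta(2)/(L^2\log L)$ gives
\[
-i\partial_\tau \tilde a_K \;=\; \mathcal{T}(\tilde a,\tilde a,\tilde a)(K) + \tfrac{\zeta(2)}{\log L}\,\mathcal{C}(\tilde a,\tilde a,\tilde a)(K) + O\bigl(L^{-1/3+\gamma}\bigr) + O\bigl(\epsilon^2 L^\gamma\bigr),
\]
which is precisely the modified equation~\eqref{modified CR} up to the stated errors. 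A standard Gronwall argument using boundedness of $\mathcal{T}$ and $\mathcal{C}$ on $X^\ell(\R^2)$, together with the propagated bound $B$ on $g$, closes the estimate on the full interval $[0,MT_R]$.

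The main obstacle is the refined two-term circle-method asymptotic in step two: one must extract the $L^2$-size correction \emph{cleanly} as an explicit bounded trilinear operator, and show that what remains is genuinely polynomially small in $L$, rather than merely $(\log L)^{-2}$. This requires going beyond the leading-order analysis of Section~\ref{sec:asymp}, by expanding the singular series more finely around its pole and bounding minor-arc contributions via Weil-type estimates for the Kloosterman sums attached to the quadratic form $\Omega_3$. The exponent $\tfrac13-\gamma$ reflects the current quality of these exponential-sum bounds for lattice points on quadrics in two variables and is also the source of the increased regularity requirement ($X^{\ell+6,15}$ versus $X^{\ell+4,9}$ in Theorem~\ref{approx thm1}).
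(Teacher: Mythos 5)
Your proposal follows the paper's proof exactly in its high-level structure: normal-form reduction to a resonant system (dimension-independent, producing the $\epsilon^2 L^\gamma$ term), then substitution of a refined two-term asymptotic for the resonant lattice sum in place of the one-term asymptotic used in Theorem~\ref{approx thm1}, then Gronwall. This is precisely the paper's argument, which is stated in one line: replace the term $I$ in \eqref{eq:w123} by $\widetilde\Delta$ from \eqref{eq: def of tilde Delta} and invoke \eqref{eq: Delta bound2}. Your two-term expansion is the content of Theorem~\ref{th:asymptotics}~(2), with $\mathcal{C}$ the operator defined in \eqref{def of C(K)}, and your remainder bound $L^{5/3+\gamma}$ is equivalent, after normalizing by $Z_2(L) = L^2\log L/\zeta(2)$, to the stated $\|\widetilde\Delta(W)\|_{X^\ell}\lesssim L^{-1/3+}$.

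The one place your account deviates from how the paper actually establishes the key asymptotic is in the number-theoretic mechanism. You attribute the $L^{-1/3}$ exponent to ``Weil-type estimates for the Kloosterman sums attached to the quadratic form $\Omega_3$.'' The paper does not need these: because $\Omega_3$ reduces (after the change of variables $z_1=K_1-K$, $z_2=K_3-K$) to the split form $\omega(z)=-2z_1\cdot z_2$, the arithmetic factor $S(q,c)$ evaluates \emph{exactly} as $q^{d/2}c_q(\omega(c))$, a Ramanujan sum (equation \eqref{e:sqc_formula}). The required sum asymptotics then come from the explicit Dirichlet series $\Gamma(s)=\zeta(s-3)/\zeta(s-2)$ via Perron's formula and contour shifting, with the error controlled by the zeta mean-value estimate $\int_0^T|\zeta(\tfrac12+it)|^2\,dt\lesssim T\log T$ (Lemma~\ref{l:zeta_mean}). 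Concretely, the $L^{-1/3}$ arises from balancing two errors: the archimedean Lipschitz error $|I(r,0)-\mathcal{I}(0)|\lesssim r$ from Lemma~\ref{lem:boundI}, against the arithmetic error $O\bigl((\rho L)^{-1/2+}\bigr)$ from Lemma~\ref{l:SLC_Omega_0}, optimizing the cutoff $\rho=L^{-1/3}$. The increased regularity requirement (to $X^{\ell+6,15}$) comes from the need to control $r\partial_r I(r,c)$ (Lemma~\ref{lem:icr}~(2)) in the Abel summation, not from exponential-sum considerations. These are differences of description rather than of proof structure, but if you were to actually carry out step two as you describe, the Kloosterman route would be substantially harder than what the explicit Ramanujan evaluation provides.
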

\medskip

\begin{remark}
All theorems in this paper, including the above two, have analogs for solutions (respectively sequences) on the unit torus $\mathbb{T}^n$  (respectively $\Z^n$). There, equation (CR) gives the effective dynamics for high-frequency envelopes of solutions of the (NLS) equation \eqref{pNLS} posed the unit torus $\mathbb{T}^n$. More precisely, starting with a solution $g(t)$ of the (CR) equation (as in the above two theorems) and taking initial data for \eqref{pNLS} (with $L=1$) of the form $\widehat u_0(k)=C_N g_0(\frac{k}{N})$ for each $k \in \Z^n$, where $C_N$ is a proper normalization parameter, one can show that $e^{-it|k|^2}\widehat u(t, k)$ is approximated (in the limit of large $N$) by $C_N g(\frac{t}{T_N}, \frac{k}{N})$ for an appropriate time-scale $T_N$. This follows from a direct rescaling of the above two theorems; we refer to \cite{FGH}[Theorem 2.6] and its proof for the details of this rescaling argument.
\end{remark}

\subsection*{Notations} Throughout the paper we adopt the notation: $e(z)=e^{2\pi i z}$ for any $z\in \mathbb C$, and the following normalization for the Fourier transform of a function $f$ on $\mathbb{R}^d$:
$$
\mbox{if $\xi \in \mathbb{R}^d$}, \quad \mathcal{F} f (\xi) = \widehat{f} (\xi) = \int_{\mathbb{R}^d} e(-x \cdot \xi) f(x)\,dx.
$$
The inverse Fourier transform reads then
$$
\mathcal{F}^{-1} g (x) = \check{g} (x) =  \int_{\mathbb{R}^d} e(x \cdot \xi)  g(\xi)\,d\xi.
$$
The Fourier transform of a function $f$ on the torus $\mathbb{T}^n_L = [0,L]^n$ is given by
$$
\mbox{if $K \in \mathbb{Z}^n_L = \left( \frac{\Z}{L} \right)^n$}, \quad \mathcal{F} f (K) = \widehat{f}_K = \int_{\mathbb{T}^d_L} f(x) e(- K \cdot x) \,dx,
$$
while the inverse Fourier transform reads
\[
[\mathcal{F}^{-1} g] (x) = \frac{1}{L^d} \sum_{K \in \mathbb{Z}^d_L} g_K e(K  \cdot x) .
\]

For multilinear sums and quadratic forms arising from \eqref{NLS}, we denote by:
\begin{itemize}
\item $\mathbb{Z}_L = \mathbb{Z} / L$. 
\item $\mathcal S_3(K) = S(K,K_1,K_2,K_3) = K_1 - K_2 + K_3 - K$.
\item $\mathcal S_{2d+1}(K) = S(K,K_1,\dots,K_{2d+1}) = K_1 - K_2  + \dots - K_{2d} + K_{2d+1} - K$.
\item $ \Omega_3(K) =  \Omega_3(K,K_1,K_2,K_3)  = |K_1|^2 - |K_2|^2 + |K_3|^2 -|K|^2$.
\item $\Omega_{2d+1}(K) =  \Omega_{2d+1}(K,K_1,\dots, K_{2d+1}) =  |K_1|^2 - |K_2|^2 + \dots - |K_{2d}|^2+ |K_{2d+1}|^2 -|K|^2$
\end{itemize}

Given two quantities $A$ and $B$, we denote
\begin{itemize}
\item  $\displaystyle \<A\> = \sqrt{1 +A^2}$.  
\item  $\displaystyle A \lesssim B \quad   \mbox{if  } \exists \; C,   \mbox{ a constant, such that  } A \leq CB$.
\item  $\displaystyle A \lesssim L^{\alpha +} B    \quad  \mbox{if  } \forall  \delta>0,  \; \exists \; C_\delta \mbox{ (independent of $L$) such that }  A \leq C_\delta L^{\alpha + \delta} B$. 
\item  $\displaystyle A \ll B \; \mbox{if }  A \leq cB \; \mbox{for a sufficiently (depending on the context) small constant}\;  c>0 $.
\end{itemize}

Our functional framework will be based on the $X^\ell$ spaces defined by the norm
$$
\| f \|_{X^\ell} = \| \langle x \rangle^\ell f(x) \|_{L^\infty}
$$
(which is meaningful whether $f$ is defined on $\mathbb{R}^n$ or $\mathbb{Z}^n_L$). If $f$ is defined on $\mathbb{R}^d$, a variant is given by the $X^{\ell,N}$ spaces defined by the norm
\begin{equation}\label{def of Xln}
\|f\|_{X^{\ell,N}(\mathbb{R}^d)}=\sum_{0\leq |\alpha|\leq N} \|\nabla^\alpha f\|_{X^\ell}.
\end{equation}

We also recall Abel's summation formula: If $\phi \in C^1([1, \infty))$ and $x\in \mathbb{N}$, 
\begin{equation}\label{e:Abel_summation}
\sum_{1\leq n\leq x} a_n\phi(n)= A(x)\phi(x)-\int_1^{x}A(u)\phi'(u)du,
\end{equation}
where 
\[A(x):= \sum_{1\leq n\leq x} a_n.\]

{\bf Acknowledgments} The authors are very grateful to Peter Sarnak for many most helpful discussions on delicate number theory questions.

TB was supported by the National Science Foundation  grant DMS-1600868.
PG was supported by the National Science Foundation  grant  DMS-1301380.
JS was  supported by the National Science Foundation  grant DMS-1363013.
ZH was supported by National Science Foundation grant DMS-1600561, a Sloan Fellowship, and a startup fund from Georgia Institute of Technology. 

\section{Lattice Sums and the circle method}\label{sec:circle}
An effective method to obtain quantitative estimates on approximating Lattice sums by integrals is the Circle Method.  Amongst the analytical tools used in this method are the Fourier transform or series, and the Poisson summation formula. A simple illustrative example on how these tools figure in obtaining quantitative estimates on lattice sums is to consider a 1-periodic function $g$, and express it in terms of its Fourier series

\[
g(x) = \sum_{n = -\infty}^{\infty} \widehat{g}(n)\,e(nx), \quad \mbox{ where }
e(t)\overset{def}{=}e^{2\pi it},
\]
and observe that since ($L \in \mathbb N$ here)
\[
\sum_{a=0}^{L-1} e\left(\frac{an}{L}\right) = 
\begin{cases}
L &\quad \text{if } L\mid n,\\
0 &\quad \text{if } L\nmid n
\end{cases}
\]
we have 
\[
\sum_{a=0}^{L-1} g\left(\frac{a}{L}\right) = L \sum_{k=-\infty}^\infty
\widehat{g}(kL).
\] 

By isolating the zeroth Fourier mode, we conclude that
\begin{equation}
\label{rhr}
\frac{1}{L} \sum_{a=0}^{L-1} g \left(\frac{a}{L}\right) - \int_0^1 g(x)
dx = \sum_{k\neq 0} \widehat{g} (kL).
\end{equation}
Therefore, if $g$ is  a smooth function, the right hand side of the equation above decays quickly in $L$:
\[
 \sum_{k\neq 0} \widehat{g} (kL) = O\left(\frac 1{L^N}\right)
\]
for any $N$.

\subsection{The Circle Method}  Let $ Q$ be a non-degenerate  quadratic form with integer coefficients defined on $\mathbb{R}^d$, and for any $m\in \mathbb{Z}$ let $ Q_{m}(z) =  Q(z) -m$.  For a fixed $\mu \in \mathbb{R}$ and $L>0$ large such that ${\mu L^2}\in \mathbb{Z}$, we form the sum,
\[
\sum_{\substack{ Q_{\mu L^2}(z) = 0\\z\in \mathbb{Z}^d}} W\left(\frac{z}{L}\right)
\]
where $W$ is a smooth rapidly decreasing function on $\mathbb{R}^d$.  

Our aim is to obtain good asymptotic of this sum for  large $L>0$. To do so, first we localize $W$ close to $ Q_{\mu L^2}(z) = 0$ by introducing an even cut off function $\chi \in C_0^\infty
\left(-\frac{1}{2}, \frac{1}{2}\right)$, $\chi (0) = 1$,  and write the sum as
\[
\sum_{\substack{ Q_{\mu L^2}(z) = 0\\z\in \mathbb{Z}^d}} W\left(\frac{z}{L}\right)=
\sum_{\substack{ Q_{\mu L^2}(z) = 0\\z\in \mathbb{Z}^d}} W\left(\frac{z}{L}\right) \chi\left( Q_{\mu L^2 }\left(\frac zL\right)\right)
\overset{def}{=}
\sum_{\substack{ Q_{\mu L^2}(z) = 0\\z\in \mathbb{Z}^d}} W_\chi \left(\frac{z}{L}\right),
\]
and  note that
\begin{equation}\label{sum}
\sum_{\substack{ Q_{\mu L^2}(z) = 0\\z\in \mathbb{Z}^d}} W \left(\frac{z}{L}\right) = \sum_{z \in \mathbb{Z}^d}
W_\chi \left(\frac{z}{L}\right) \int_0^1 e(\alpha  Q_{\mu L^2}(z)) d\alpha = \int_0^1  \sum_{z \in \mathbb{Z}^d} W_\chi \left(\frac{z}{L}\right)e(\alpha  Q_{\mu L^2}(z)) d\alpha.
\end{equation}
For rational $\alpha = \frac aq$, the integrand $\ \sum_{z \in \mathbb{Z}^d} W_\chi \left(\frac{z}{L}\right)e(\alpha  Q_{\mu L^2}(z))$ depends critically on the size of the denominator $q$. This can be illustrated by the following elementary calculation
\[
\sum_{\ell=1}^L e\left(\frac aq \ell^2\right) \sim \sum_{m=0}^{[\frac Lq]}  \sum_{i= 0}^{q-1} e\left(\frac aq(i+ mq)^2 \right) \sim  \frac Lq  \sum_{i= 0}^{q-1} e\left(\frac aqi^2 \right) \sim \frac L{\sqrt{q}}
\]
where in the last calculation we used the formula for Gauss's sum. From this calculation one can deduce two things:  1) For $q>L$ the contribution to the sum in \eqref{sum} should be of order $O(L^{\frac d2})$; and 2)  The major contribution to the sum comes from intervals around rational points whose denominator is small, i.e., $q<L$. Thus to approximate the sum it pays off to split the integral  by introducing  a $1$-periodic  partition of unity
which separates rational numbers according to the size of their denominator relative to the scale $L$.  For this, we first notice that any rational point $\frac aq$,  where the $\mbox{GCD}$ of $a$ and $q$ is $1$, i.e. $(a,q) =1$, is a distance of order $O(\frac 1{qL})$ from  rational points with denominators smaller than $L$.  Thus it is reasonable to look for a partition of unity generated by 1-periodic functions, centered around rational points $\frac aq$, concentrated on an interval of size  $O(\frac 1{qL})$. To this effect, define the family of functions $\psi$  indexed by  $q$ (or $\frac qL$ since the contribution depends on this ratio and $L$ is fixed once and for all), parametrized by $L$ to be such that 
\[
\mbox{for all $\alpha \in \mathbb{R}$,} \quad \sum_{q=1}^\infty \sum_{\substack{a=0 \\(a,q) = 1}}^{q-1} \psi\left(\alpha -
\frac{a}{q}; \frac{q}{L}, L\right) = 1,
\]
with the understanding that $(0,q)=1$ if and only if $q=1$.  

The 1-periodic functions $\psi$ will be constructed from a smooth real valued even  function $\phi$, defined on $\mathbb{R}\times(0,\infty)\times(0,\infty)$, by
\[
\psi\left(\beta; \frac{q}{L}, L\right) = \sum_{\ell \in \mathbb{Z}} \phi\left(qL(\beta + \ell); \frac{q}{L}, L\right),
\]
in which case the desired partition of unity can be written as
\[
 \sum_{q=1}^\infty \sum_{\substack{a=0 \\(a,q) = 1}}^{q-1} \sum_{\ell \in \mathbb{Z}} \phi\left(qL(\alpha - \frac aq + \ell); \frac{q}{L}, L\right) = 1.
\]

Writing the Fourier series  coefficients of the above identity, we obtain
\begin{equation}
\label{deltan}
\sum_{q=1}^\infty \sum_{\substack{a=0 \\(a,q) = 1}}^{q-1} e
\left(-\frac{a}{q} n\right) \frac 1{qL} \widehat{\phi}\left(\frac{n}{qL}; \frac{q}{L}, L\right)  = \delta_n
\end{equation}
where $\widehat\phi$ is the Fourier transform of $\phi$ in the first variable, and $\delta_n$ is Kronecker delta.  
 
 The existence of such a $\phi$,  which is far from obvious, was established in \cite{DFI}, and will be presented  below  following  \cite{HB}. We will be able to ensure that $\widehat \phi$ is real-valued which leaves identity \eqref{deltan} invariant under taking conjugates (or equivalently switching the sign of $a$). So assuming for the moment that such a $\widehat \phi$ exists, we can write 
\[
\int_0^1 e(\alpha  Q_{\mu L^2}(z))d\alpha = \delta_{ Q_{\mu L^2}(z)} = \sum_{q=1}^\infty
\sum_{\substack{a=0 \\(a,q) = 1}}^{q-1} e
\left(\frac{a}{q}  Q_{\mu L^2}(z)\right) \frac 1{qL} \widehat{\phi}\left(\frac{ Q_{\mu L^2}(z)}{qL}; \frac{q}{L}, L\right),
\]
and consequently the original sum can be written as
\[
\sum_{\substack{ Q_{\mu L^2}(z) = 0\\z \in \mathbb{Z}^d}}
W\left(\frac{z}{L}\right) = \sum_{\substack{z\in \mathbb{Z}^d\\q \geq
    1}} \sum_{\substack{a=0 \\(a,q) = 1}}^{q-1}
W_\chi\left(\frac{z}{L}\right) \frac 1{qL} \widehat{\phi}\left(\frac{ Q_{\mu L^2}(z)}{qL}; \frac{q}{L}, L\right) e\left(\frac{a Q_{\mu L^2}(z)}{q}\right).
\]

For a fixed rational point $\frac aq$ we use Poisson summation formula to approximate each of these sums by an integral.  Recall that  for any $b\in \mathbb{Z}^d$, the Poisson summation formula is given by 
\[
\sum_{y \in \mathbb{Z}^d} g(b + qy) =\frac 1{q^d} \sum_{c \in \mathbb{Z}^d} e\left(\frac{b \cdot
  c}{q}\right) \int_{\mathbb{R}^d} g(x) e\left(-\frac{c \cdot x}{q}\right)\, dx.
\]
Writing now
\begin{multline*}
 \sum_{z\in
  \mathbb{Z}^d} W_\chi \left(\frac{z}{L}\right) \frac 1{qL} \widehat{\phi}\left(\frac{ Q_{\mu L^2}(z)}{qL}; \frac{q}{L}, L\right) e\left(\frac{a Q_{\mu L^2}(z)}{q}\right) =\\
 \sum_{\substack{y \in \mathbb{Z}^d \\[.3em] 0\le b_i\le q-1\\[.3em] 1 \leq i \leq d}}W_\chi \left(\frac{b + qy}{L}\right) \frac 1{qL} \widehat{\phi}\left(\frac{ Q_{\mu L^2}(b + qy)}{qL}; \frac{q}{L}, L\right) e\left(\frac{a Q_{\mu L^2}(b + qy)}{q}\right)
\end{multline*}
and noting that
\[
e\left(\frac{a Q_{\mu L^2}(b + qy)}{q}\right) = e\left(\frac{a Q_{\mu L^2}(b)}{q}\right),
\]
we can apply Poisson summation formula to conclude that
\begin{equation*}
\sum_{\substack{ Q_{\mu L^2}(z) = 0\\z \in \mathbb{Z}^d}}
W\left(\frac{z}{L}\right)=
\sum_{q=1}^\infty \sum_{c \in \mathbb{Z}^d} S(q,c) \frac 1{q^d}
\int_{\mathbb{R}^d} W_\chi \left(\frac{x}{L}\right) \frac 1{qL} \widehat{\phi}\ \left(\frac{ Q_{\mu L^2}(x)}{qL}; \frac{q}{L}, L\right) e\left(-\frac{c \cdot x}{q}\right)\,dx
\end{equation*}
where 
\begin{equation*}
S_{\mu L^2}(q,c) =  \sum_{\substack{a=0 \\(a,q)=1}}^{q-1}\sum_{\substack{b_i =0 \\ 1\le i \le d}}^{q-1} e \left(\frac{a Q_{\mu L^2}(b) + c\cdot b}{q}\right).
\end{equation*}

By  changing variables $x\mapsto Lx$ and denoting by  $r=\frac qL$, the sum can be written as 
\begin{equation}\label{sumpsi}
\sum_{\substack{ Q_{\mu L^2}(z) = 0\\z \in \mathbb{Z}^d}}\!\!\!\!\!
W\left(\frac{z}{L}\right) = 
L^{d-2} \sum_{q=1}^\infty \sum_{c \in \mathbb{Z}^d} S_{\mu L^2}(q,c)  \frac 1{q^d}\int_{\mathbb{R}^d}
W_\chi(z) \frac 1{r} \widehat{\phi}\ \left(\frac{ Q_{\mu}(x)}{r}; r, L\right) e\left(-\frac{c \cdot x}{r}\right)\,dx
\end{equation}

\subsection{Existence of the partition of unity} The partition of Kronecker delta as in~\eqref{deltan} is essentially due to Duke, Friedlander and Iwaniec \cite{DFI} (see also \cite{IK}).
Here we follow the formulation of Heath-Brown \cite{HB}. 

In order to simplify the sum in \eqref{deltan}, we adopt the ansatz
\[
 \frac 1{qL} \widehat{\phi}\left(\frac{n}{qL}; \frac{q}{L}, L\right)  =  \sum_{j=1}^\infty  \frac 1{jqL} v\left(\frac{n}{jqL}; \frac{jq}{L}, L\right)
 \]
(for a function $v$ to be defined). Using the fact that for any function $u$
 \[
\sum_{k=1}^\infty \sum_{\alpha=0}^{k-1} u(\alpha,k) =\sum_{j=1}^\infty \sum_{q = 1}^\infty 
\sum_{\substack{a =0 \\(a,q) = 1}}^{q-1} u(ja, jq),
\]
equation \eqref{deltan}  can be written in terms of $v$ as 
\begin{align}
\sum_{q=1}^\infty \sum_{\substack{a=0 \\(a,q) = 1}}^{q-1}  & e\left(-\frac{a}{q} n\right) \frac 1{qL} \widehat{\phi}\left(\frac{n}{qL}; \frac{q}{L}, L\right) \\
& = \sum_{q=1}^\infty \sum_{\substack{a=0 \\(a,q) = 1}}^{q-1} e\left(-\frac{a}{q} n\right)\sum_{j=1}^\infty  \frac 1{jqL} v\left(\frac{n}{jqL}; \frac{jq}{L}, L\right) \\
& = \sum_{k=1}^\infty \sum_{\alpha=0}^{k-1} e\left(-\frac{\alpha}{k} n\right)\frac 1{kL} v\left(\frac{n}{kL}; \frac{k}{L}, L\right) \\
& =\sum_{k\mid n}    \frac 1{L} v\left(\frac{n}{kL}; \frac{k}{L}, L\right) = \delta_n.
\end{align}
The function $v$ was constructed in \cite{DFI} as follows. 
Let $\omega_0 \in C_0^\infty (\frac12,1)$, $\omega_0 \geq 0$
\[
\int_{\frac12}^1 \omega_0 (z) dz = 1,
\]
then for $n \in \mathbb{Z}$
\[
\sum_{k>0,\; k\mid n} \left[ \omega_0 \left(\frac{k}{L}\right) - \omega_0
\left(\frac{|n|}{kL}\right) \right] = 
\begin{cases}
0 \quad &n\neq 0\\
\sum_{k=1}^{\infty} \omega_0 (\frac{k}{L}) \quad &n=0
\end{cases}
\]
Note that by \eqref{rhr} we have
\[\frac{1}{L} \sum_{k=1}^\infty \omega_0 \left(\frac{k}{L}\right) =
\int_{-\infty}^{\infty} \omega_0(z)\,dz + O\left(\frac{1}{L^N}\right)\]
for any $N > 0$, therefore
\[
\sum_{k=1}^\infty \omega_0 \left(\frac{k}{L}\right) = LC_L;  \quad C_L = 1 +
O\left(\frac{1}{L^N}\right),
\]
and
\[
\begin{split}
 \delta_n &= \frac{1}{LC_L}\sum_{k\mid n} \left[ \omega_0 \left(\frac{k}{L}\right) - \omega_0\left(\frac{|n|}{kL}\right) \right]\\
 &= \frac{1}{LC_L}\sum_{k\mid n} \left[ \omega_0 \left(\frac{k}{L}\right) - \omega_0
\left(\frac{|n|}{kL}\right) \right],
\end{split}
\]
where the cutoff function $\chi$ (which is chosen to be even, smooth, compactly supported on $[-\frac{1}{2},\frac{1}{2}]$, and such that $\chi(0)=1$) is introduced here for convenience. Consequently we can take $v$ to be 
\[
v\left(\frac{n}{kL}; \frac{k}{L}, L\right) =\frac{1}{C_L}\left[\omega_0 \left(\frac{k}{L}\right) - \omega_0
\left(\frac{|n|}{kL}\right)\right],
\]
and thus
\begin{align*}
\frac 1{qL}\widehat{\phi} \left(\frac{n}{qL}; \frac qL, L\right)  &=   \frac{1}{C_L} \sum_{j=1}^{\infty}
\frac{1}{qjL} \left[\omega_0 \left(\frac{qj}{L}\right) -
\omega_0\left(\frac{|n|}{qjL}\right)\right].
\end{align*}
We can simplify this expression by letting
$y= \frac n{L^2}$, and define $h$ by
\begin{equation}
h(r,y)  \overset{def}{=}  \sum_{j=1}^{\infty}
\frac{1}{rj} \omega_0 (rj) - \omega_0\left(\frac{|y|}{jr}\right),
\end{equation}
(recall that $r=\frac{q}{L}$) so that we can express $\widehat\phi$ as
\[
\begin{split}
\frac 1{r}\widehat{\phi}\left(\frac yr;r,L\right)=\frac{1}{C_L}  h(r,y).   
\end{split}
\]

Going back to \eqref{sumpsi}, we notice that we have the freedom of multiplying $h(r, y)$ by $\chi (y) $ at no cost (for example by starting with $W_{\chi^2}$ instead of $W_\chi$ which also leaves the original sum invariant). Therefore, with 
$$
h_\chi(r, y)\overset{def}{=}  \frac {\chi(y)}{C_L}  h(r,y),
$$
we can summarize these calculations in the following theorem:
\begin{theorem}\label{th:cm}
\begin{equation}
\label{CM}
\sum_{\substack{ Q_{\mu L^2}(z) = 0\\z\in \mathbb{Z}^d}} W\left(\frac{z}{L}\right)= \frac{L^{d-2}}{C_L}
\sum_{q=1}^\infty \sum_{c \in \mathbb{Z}^d} S_{\mu L^2}(q,c) \frac 1{q^d} I_\mu(r, c)
\end{equation}

\begin{equation}
\label{SLC}
S_{\mu L^2}(q,c) =  \sum_{\substack{a=0 \\(a,q)=1}}^{q-1}\sum_{\substack{b_i =0 \\ 1\le i \le d}}^{q-1} e \left(\frac{a Q_{\mu L^2}(b) + c\cdot b}{q}\right)
\end{equation}

\begin{equation}
\label{ICR}
\begin{split}
I_\mu(r,c) &= \int_{\mathbb R^d} W_\chi(x)h_\chi\left(r, Q_{\mu}(x)\right)  e\left(-\frac{c \cdot x}{r}\right)\, dx
\end{split}
\end{equation}

where $r = \frac qL$ and $C_L = 1 + O(\frac{1}{L^N})$ for all $ N > 1$.  

\end{theorem}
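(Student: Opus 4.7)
The proof essentially organizes the computation that has been carried out informally in the preceding pages into a clean derivation. The plan is to proceed in three stages.

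First, I would establish the combinatorial identity underlying the Kronecker-delta partition. The key observation is that for any $n\in\mathbb{Z}$,
\[
\sum_{k>0,\,k\mid n}\Bigl[\omega_0\!\left(\tfrac{k}{L}\right) - \omega_0\!\left(\tfrac{|n|}{kL}\right)\Bigr]
=\begin{cases} 0 & n\neq 0,\\ \sum_{k\ge 1}\omega_0(k/L) & n=0,\end{cases}
\]
which follows for $n\neq 0$ from the involution $k\mapsto |n|/k$ on divisors (the two bracketed terms swap with a sign), and for $n=0$ from $\omega_0$ being supported away from $0$. The comparison \eqref{rhr} applied to $g(x)=\omega_0(x)$ extended $1$-periodically, or simply Poisson summation, then gives $\sum_{k\ge 1}\omega_0(k/L) = LC_L$ with $C_L=1+O(L^{-N})$.

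Second, I would use this to reindex and obtain the representation of $\delta_n$ in the form \eqref{deltan}. Setting $v(n/(kL);k/L,L) := C_L^{-1}\bigl[\omega_0(k/L)-\omega_0(|n|/(kL))\bigr]$ and grouping divisors $k$ as $k=jq$ with $(a,q)=1$ gives
\[
\delta_n \;=\; \sum_{k\mid n}\tfrac{1}{L}v\!\left(\tfrac{n}{kL};\tfrac{k}{L},L\right)
\;=\;\sum_{q=1}^\infty\sum_{\substack{a=0\\(a,q)=1}}^{q-1} e\!\left(-\tfrac{a}{q}n\right)\tfrac{1}{qL}\widehat{\phi}\!\left(\tfrac{n}{qL};\tfrac{q}{L},L\right),
\]
where $\widehat\phi$ is determined by the ansatz $(qL)^{-1}\widehat\phi(n/(qL);q/L,L)=\sum_{j\ge 1}(jqL)^{-1}v(n/(jqL);jq/L,L)$ and the fact that $\sum_{k=1}^\infty\sum_{\alpha=0}^{k-1}u(\alpha,k)=\sum_{j=1}^\infty\sum_{q=1}^\infty\sum_{(a,q)=1}u(ja,jq)$. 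Evenness of $\omega_0$ makes $\widehat\phi$ real, which legitimizes the symmetric form of the identity. Specializing $n=Q_{\mu L^2}(z)$ and recognizing $\int_0^1 e(\alpha Q_{\mu L^2}(z))d\alpha = \delta_{Q_{\mu L^2}(z)}$ then rewrites the original constrained sum as
\[
\sum_{\substack{Q_{\mu L^2}(z)=0\\ z\in\mathbb{Z}^d}}\!\! W\!\left(\tfrac{z}{L}\right)
=\sum_{q\ge 1}\sum_{\substack{a=0\\(a,q)=1}}^{q-1}\sum_{z\in\mathbb{Z}^d}W_\chi\!\left(\tfrac{z}{L}\right)\tfrac{1}{qL}\widehat{\phi}\!\left(\tfrac{Q_{\mu L^2}(z)}{qL};\tfrac{q}{L},L\right)e\!\left(\tfrac{a Q_{\mu L^2}(z)}{q}\right).
\]
The cutoff $\chi$ may be inserted freely by replacing $W_\chi$ with $W_{\chi^2}$, which matches the definition of $h_\chi$ and converts $\widehat\phi$ into $h_\chi(r,\cdot)$ after setting $r=q/L$, $y=n/L^2$.

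Third, for each fixed pair $(a,q)$, I would split $z=b+qy$ with $0\le b_i\le q-1$ and $y\in\mathbb{Z}^d$, and apply the Poisson summation formula in $y$. Because $Q_{\mu L^2}(b+qy)\equiv Q_{\mu L^2}(b)\pmod{q}$ (the cross and quadratic terms in $qy$ contribute integer multiples of $q$ once multiplied by $a/q$), the phase $e(a Q_{\mu L^2}(b+qy)/q)$ depends only on $b$. The $b$-sum therefore factors out as the Kloosterman-type sum $S_{\mu L^2}(q,c)$ defined in \eqref{SLC}, and the $y$-sum produces an integral over $\mathbb{R}^d$ of $W_\chi(x/L)\,(qL)^{-1}\widehat\phi(Q_{\mu L^2}(x)/(qL);q/L,L)\,e(-c\cdot x/q)$. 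Changing variables $x\mapsto Lx$ and using $Q_{\mu L^2}(Lx)=L^2 Q_\mu(x)$ yields the factor $L^d$ and converts the integral to $I_\mu(r,c)$, for a net prefactor of $L^{d-2}/C_L$ as claimed.

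The main obstacle will be justifying the manipulations rigorously rather than formally: in particular verifying absolute convergence of the triple sum over $(q,c,z)$ (or equivalently controlling the decay of $\widehat\phi$ and its derivatives uniformly in the parameters $r,L$) so that the interchanges of summation and integration, and the splitting into residue classes, are all valid. This rests on the rapid decay of $\omega_0$ and the compact support of $\chi$, which together force $h_\chi$ (and hence $\widehat\phi$) to decay rapidly in its first argument; a careful but routine bound along the lines of \cite{HB} provides the needed estimates.
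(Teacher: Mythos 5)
Your proposal reproduces, in the same order and with the same key lemmas, the construction the paper gives in the paragraphs preceding Theorem~\ref{th:cm}: the divisor identity for $\omega_0$ leading to the Duke--Friedlander--Iwaniec partition of $\delta_n$, the reindexing $k=jq$ with $(a,q)=1$, insertion of the cutoff $\chi$ via $W_{\chi^2}$, Poisson summation over residue classes $z=b+qy$ (using $Q_{\mu L^2}(b+qy)\equiv Q_{\mu L^2}(b)\pmod q$), and the rescaling $x\mapsto Lx$ producing the prefactor $L^{d-2}/C_L$. This is essentially identical to the paper's argument.
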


The integral $I_\mu(r,c)$ can be expressed in terms of the Fourier transform of $h_\chi$ in the following manner.   Writing
\[
\widehat{h}_\chi(r,s) = \int_{-\infty}^{\infty} h_\chi(r,y)e(-sy)dy,
\]
then 
\[
 I_\mu(r,c) = \int_{-\infty}^{\infty} \widehat{h}_\chi(r,s) \int_{\mathbb R^d} W_\chi(x) e{\left(s Q_{\mu}(x) -\frac{c \cdot x}{r}\right)}\, dx\, ds.
\]

\begin{remark 1}
At this point we remark that Theorem \ref{th:cm}  will be used to obtain sharp upper bound on the lattice sum for any $\mu$ as well as the asymptotics of the resonant sum, which corresponds to $\mu=0$.  When $\mu=0$, we simplify the notation by writing $I$ and $S$ for $I_0$ and $S_0$ . Finding the asymptotics of the lattice sum for large $L$ is equivalent to finding good asymptotics for the arithmetic function $S$ and the integral $I$. In order to prove Theorems \ref{approx thm1} and \ref{approx thm2}, the weight $W$ will be chosen to depend on $K$ in a nontrivial way. The dependence of $W$ on $K$ will manifest itself in two ways: 
\begin{enumerate}
\item
One has to obtain bounds on the resonant sum that are sharp both in terms of the dependence on $L$ and in terms of the function space where the approximation is performed.
\item
One has to prove the asymptotics of the resonant sum while keeping track of the size of the error in the relevant function space.  
\end{enumerate}

We will give below the asymptotics of $h$ and $S$,  and leave the analysis taking the weight $W$ into account to the following two sections.
\end{remark 1}


\subsection{The function $h_\chi$ and its Dirac mass asymptotics}  
To find the asymptotics of $I_\mu(r,c)$ we have to understand the behavior of the function $h_\chi$ as $r\to 0$. In fact we will see below that  as $r\to0$, $h_\chi(r, \cdot)\to \delta$.

Recall that $h_\chi :\mathbb{R}_+\times \mathbb{R} \to \mathbb{R}$ is defined as 
\[
h_\chi(r,y) = \chi(y) h(r,y) =\chi(y) \sum_{j=1}^\infty \left[ \frac{1}{rj} \omega_0(rj) - \frac{1}{rj} \omega_0 \left( \frac{|y|}{rj} \right) \right].
\]
where $\chi$ is  even, $\chi \in C_0^\infty
\left(-\frac{1}{2}, \frac{1}{2}\right)$, $\chi (0) = 1$,  and   $\omega_0 \in C_0^\infty
\left(\frac{1}{2}, 1\right)$  with  $\int_0^\infty \omega_0 = 1$. Thus $h_\chi$ is even in $y$ and
\[
\supp h_\chi \subset   (0,1) \times   \left(-\frac{1}{2}, \frac{1}{2}\right).
\]
Consequently we need to analyze $h$ for $(r,y)\in  (0,1) \times   \left(-\frac{1}{2}, \frac{1}{2}\right)$.
\begin{lemma}
\label{hummingbird}
For  $(r,y)\in  (0,1) \times   \left(-\frac{1}{2}, \frac{1}{2}\right)$,
\begin{itemize} 
\item[(i)] If $|y| < \frac{r}{2} $, $h$ is independent of $y$ and $\displaystyle |\partial_r^k h(r,y)| \lesssim \frac{1}{r^{k+1}}$ for all $k \geq 0$.
\item[(ii)] $\displaystyle h(r,y) = \frac{1}{r} \overline h (r) - \frac{1}{r} \widetilde h \left( \frac{r}{|y|} \right)$ with 
\[
|\partial_z^k \overline h(z) | + |\partial_z^k \widetilde h(z) | \lesssim |z|^N,   \quad \forall k,N \ge 0.
\]

Consequently, one has the estimate $\sup_{r} \|h\|_{L^1_y}\lesssim 1$ and $\sup_{r} r\|\partial_r h\|_{L^1_y}\lesssim 1$.

 \item[(iii)] For any $1>a>r>0$, and $n\in \mathbb{N}_0$ we have
 \begin{equation}\label{eq:appxid}
\int_{-a}^a x^n h(r,x)\, dx = \delta_n + O(r^N),
\end{equation}
for any positive $N$.
\end{itemize}
\end{lemma}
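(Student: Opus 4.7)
The three claims are interrelated and I would handle them in order.

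For \textbf{(i)}, the support condition $\supp\omega_0 \subset (1/2,1)$ forces $\omega_0(|y|/(rj)) = 0$ whenever $|y| < r/2$ and $j \geq 1$, so the second summand in the definition of $h$ vanishes and $h(r,y) = h_1(r) := \sum_j (rj)^{-1}\omega_0(rj)$ depends on $r$ alone. Writing $h_1(r) = \sum_j \phi(rj)$ with $\phi(u) := \omega_0(u)/u$ smooth and compactly supported, the chain rule $\partial_r^k\phi(rj) = j^k\phi^{(k)}(rj)$ together with the fact that only $\sim r^{-1}$ values of $j$ contribute yields $|\partial_r^k h_1(r)| \lesssim r^{-(k+1)}$.

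For \textbf{(ii)}, the plan is to apply Poisson summation separately to $h_1(r) = \sum_j \phi(rj)$ and to $h_2(r,y) := \sum_j \psi_y(rj)$, with $\psi_y(u) := \omega_0(|y|/u)/u$ smooth and compactly supported in $(|y|, 2|y|)$; thus $h_i = r^{-1}\sum_{k \in \mathbb{Z}}\widehat{\;\cdot\;}(k/r)$. The substitution $v = |y|/u$ shows $\widehat\phi(0) = \widehat{\psi_y}(0) = C := \int_0^\infty\omega_0(u)/u\,du$, so the $k = 0$ modes cancel in $h = h_1 - h_2$, and the same substitution yields $\widehat{\psi_y}(k/r) = T(k|y|/r)$ for the Schwartz function $T(t) := \int_0^\infty\omega_0(v)/v\, e(-t/v)\,dv$ (rapid decay at infinity by integration by parts: the phase $t/v$ has derivative $\gtrsim |t|$ on $\supp\omega_0$). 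Setting
\[
\bar h(r) := \sum_{k \neq 0}\widehat\phi(k/r), \qquad \tilde h(z) := \sum_{k \neq 0}T(k/z),
\]
both sums vanish to infinite order at $0$ by the Schwartz property, establishing the decomposition $h(r,y) = r^{-1}(\bar h(r) - \tilde h(r/|y|))$ for $|y| > r/2$ (the case $|y| < r/2$ being handled by (i)). Derivative bounds follow by termwise differentiation. For $\|h\|_{L^1_y}$ I would split at $|y| = r/2$: the inner region contributes $r|h_1(r)| = O(1)$, while the outer region, after substituting $z = r/|y|$, becomes $\int_{2r}^{2}|\tilde h(z)|/z^2\,dz + O(r^{N-1})$, uniformly $O(1)$ thanks to the rapid vanishing of $\tilde h$ near zero; the weighted estimate for $r\|\partial_r h\|_{L^1_y}$ proceeds similarly, with the $|y|^{-1}$ factor from $\partial_r\tilde h(r/|y|)$ absorbed by the same change of variables.

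For \textbf{(iii)}, the integrand is odd for $n$ odd and the integral vanishes. For $n$ even, I write
\[
\int_{-a}^a x^n h(r,x)\,dx = \frac{2a^{n+1}}{n+1}h_1(r) - 2\sum_{j \geq 1} F_n(j), \qquad F_n(j) := (rj)^n G_n\!\bigl(\tfrac{a}{rj}\bigr),
\]
with $G_n(t) := \int_0^t u^n\omega_0(u)\,du$, and apply Euler--Maclaurin to the sum. Crucially, $F_n$ is identically $(rj)^n M_n$ on $[1, a/r]$ (where $M_n := \int u^n\omega_0$), transitions smoothly on $[a/r, 2a/r]$, and vanishes beyond; transition-region derivatives scale as $(r/a)^k$, making the Euler--Maclaurin remainder $O(r^N)$. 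A direct computation --- splitting and integration by parts on the transition piece --- gives $\int_1^\infty F_n = Ca^{n+1}/(r(n+1)) - M_n r^n/(n+1)$, whose $1/r$ leading term exactly cancels $\frac{2a^{n+1}}{n+1}h_1(r) = 2Ca^{n+1}/(r(n+1)) + O(r^{N-1})$. The remaining order-$r^n$ contributions --- $F_n(1)/2 = r^n M_n/2$, the $-M_n r^n/(n+1)$ from the integral, and $-(B_{2k}/(2k)!)\,F_n^{(2k-1)}(1) = -(B_{2k}/(2k)!)\,r^n M_n\,n!/(n+1-2k)!$ for $2k \leq n$ --- combine (via the Bernoulli identity that underlies Faulhaber's formula for $\sum_j j^n$) into $M_n\delta_{n,0} = \delta_n$, with subleading corrections absorbed into $O(r^N)$.

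The main obstacle is the order-$r^n$ cancellation in (iii) for general even $n \geq 2$, which boils down to the classical Bernoulli identity $\sum_{k=0}^{n}\binom{n+1}{k}B_k = 0$ (for $n \geq 1$); a careful coefficient match is required but otherwise routine. A cleaner conceptual alternative is to extract (iii) from the approximate-identity property of $h_\chi/C_L$ built into the construction of \cite{DFI} (compare \eqref{deltan}), by testing against smooth cutoffs of $x \mapsto x^n$ supported in $[-a,a]$.
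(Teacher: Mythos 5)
Your treatments of (i) and (ii) coincide with the paper's: (i) uses the support condition on $\omega_0$ to kill the $y$-dependent term plus a counting argument for the derivative bound, and (ii) applies Poisson summation to each of the two sums, observes the zero modes cancel (by the change of variables $v=|y|/u$), and identifies the $\ell\neq 0$ tails as Schwartz-type functions $\overline h$ and $\widetilde h$. Your definition of $T(t)=\int\omega_0(v)v^{-1}e(-t/v)\,dv$ matches the paper's $\widetilde h$ after the substitution $s=1/v$. The $L^1$ bounds are also handled the same way: split at $|y|=r/2$ and change variables $z=r/|y|$ on the outer region.

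Part (iii) is where you take a genuinely different route. The paper, for $n=0$, writes $\int_{-a}^a h = 2\tfrac{a}{r}\overline h(r) - 2 h_0(r/a) - 2\lim_{\epsilon\to 0} J_\epsilon(r)$ and identifies $\lim J_\epsilon=-\tfrac12$ via the Fourier series of the sawtooth $\vartheta_p$; for $n\geq 1$ it integrates by parts $n$ times in $s$ and once in $x$. You instead write the integral as $\tfrac{2a^{n+1}}{n+1}h_1(r) - 2\sum_{j\geq 1}F_n(j)$ with $F_n(j)=(rj)^nG_n(a/(rj))$, compute $\int_1^\infty F_n = \tfrac{Ca^{n+1}}{r(n+1)} - \tfrac{M_nr^n}{n+1}$ exactly, and apply Euler--Maclaurin. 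This is correct: the $1/r$ singularities cancel against $h_1(r)=C/r+O(r^N)$, every remaining boundary term is an exact multiple of $M_n r^n$, and the coefficient $\tfrac{1}{n+1}-\tfrac12+\sum_{1\le k\le n/2}\tfrac{B_{2k}}{(2k)!}\tfrac{n!}{(n-2k+1)!}$ is $(n+1)^{-1}\sum_{j=0}^n\binom{n+1}{j}B_j$, which is $\tfrac12$ for $n=0$ and $0$ for all even $n\geq 2$ — exactly producing $\delta_n$. Since $F_n$ is polynomial of degree $n$ on $[1,a/r]$ and has derivatives of size $r^{2m}a^{n-2m}$ on a transition interval of length $\sim a/r$, the remainder is $O_a(r^{2m-1})$, giving $O(r^N)$ for each fixed $a$ and $N$. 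Your method unifies all $n$ in one computation and avoids the distributional limit $J_\epsilon\to -\tfrac12$, at the cost of the Bernoulli bookkeeping; the paper's split ($n=0$ vs.\ $n\geq 1$) is arguably quicker since $n=0$ is the only case that actually enters Corollary~\ref{cor:dirac}. Both are valid. Your closing alternative — deducing (iii) directly from the approximate-identity property \eqref{deltan} — is more problematic: \eqref{deltan} evaluates a Fourier series at integer points $n$, which does not immediately yield the continuous moment integrals $\int x^n h(r,x)\,dx$, so some version of the Poisson/Euler--Maclaurin work would still be needed.
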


\begin{proof}
The proof of $(i)$ is straightforward, so we move on to the proof of $(ii)$.
By the Poisson summation formula,
\begin{equation}
\label{pinguin1}
\begin{split}
\sum_{j \in \mathbb{Z}} \frac{1}{r j} \omega_0(r j) & = \frac{1}{r} \sum_{\ell \in \mathbb{Z}} \int \frac{1}{s} \omega_0(s) e \left( \frac{-\ell s}{r} \right) \,ds \\
& = \frac{1}{r} \int \frac{1}{s} \omega_0(s)\,ds + \frac{1}{r} \underbrace{\sum_{\ell \neq 0} \int \frac{1}{s} \omega_0(s) e \left( \frac{-\ell s}{r} \right)}_{\displaystyle \overline h(r)} \,ds
\end{split}
\end{equation}
It is easy to check, by repeated integration by parts, that $ |\partial_r^k \overline{h} (r)| \lesssim r^N$ for all $k \geq 0$.

Another application of Poisson's summation formula gives
\begin{equation}
\label{pinguin2}
\begin{split}
\sum_{j \in \mathbb{Z}} \frac{1}{r j} \omega_0\left( \frac{|y|}{rj} \right) & = \frac{1}{r} \sum_{\ell \in \mathbb{Z}} \int \frac{1}{s} \omega_0 \left( \frac{1}{s} \right) e \left( \frac{-\ell s|y|}{r} \right) \,ds \\
& =  \frac{1}{r}  \int \frac{1}{s} \omega_0 \left( \frac{1}{s} \right)\,ds + \frac{1}{r} \underbrace{\sum_{\ell \neq 0} \int \frac{1}{s} \omega_0 \left( \frac{1}{s} \right) e \left( \frac{-\ell s |y|}{r} \right)}_{\displaystyle \widetilde{h} \left(\frac{r}{|y|}\right)} \,ds
\end{split}
\end{equation}
Once again, repeated integrations by parts show that $|\partial^k_z \widetilde{h}(z)| \lesssim |z|^N$.

Subtracting~\eqref{pinguin2} from~\eqref{pinguin1} and noticing that $\frac{1}{r}  \int \frac{1}{s} \omega_0 \left( \frac{1}{s} \right)\,ds = \frac{1}{r} \int \frac{1}{s} \omega_0(s)\,ds$ , we see that
\begin{equation}
\label{pinguin3}
h(r,y) = \frac{1}{r} \overline h (r) - \frac{1}{r} \widetilde h \left( \frac{r}{y} \right),
\end{equation}
and the desired bound follows from the estimates on $\overline h$ and $\widetilde h$.

Next we  turn to proving $(iii)$.   Starting from the equation~\eqref{pinguin3},   we have for  $n=0$
\[
\int_{-a}^a h(r,x)\, dx  = 2\frac{a}{r} \overline{h}(r) - 2\frac{1}{r} \int_0^a \widetilde{h} \left( \frac{r}{x} \right)\,dx,
\]
since $h$ is even.  In order to estimate $\int_0^a \widetilde{h} \left( \frac{r}{x} \right)\,dx$, we rewrite it as the $\lim_{\epsilon \to 0} \int_\epsilon^a \widetilde{h} \left( \frac{r}{x} \right)\,dx$. Since the series in $\ell$ below converges uniformly for $\epsilon>0$, the following manipulations are justified:
\begin{align*}
\frac{1}{r} \int_\epsilon^a \widetilde{h} \left( \frac{r}{x} \right)\,dx & = \frac{1}{r}  \int_\epsilon^a \sum_{\ell \neq 0} \int\frac{1}{s} \omega_0 \left( \frac{1}{s} \right) e \left(\frac{ -\ell s x}{r} \right) \,ds\,dx \\
&  =\frac{1}{r}  \sum_{\ell \neq 0} \int \frac{1}{s} \omega_0  \left( \frac{1}{s} \right) \int_\epsilon^a  e \left( \frac{-\ell s x}{r} \right)\,dx \,ds \\
& =  -  \underbrace{\sum_{\ell \neq 0} \int \frac{1}{s} \omega_0  \left( \frac{1}{s} \right) \frac{1}{2\pi i \ell s} e \left( \frac{-\ell s a}{r} \right)\,ds}_{\displaystyle h_0 \left( \frac{r}{a} \right)} +  \underbrace{\sum_{\ell \neq 0} \int \frac{1}{s} \omega_0  \left( \frac{1}{s} \right) \frac{1}{2\pi i \ell s} e \left( \frac{-\ell s \epsilon}{r} \right)\,ds}_{\displaystyle J_\epsilon(r)}.
\end{align*}
Letting $\epsilon \to 0$ leads to
$$
\int_{-a}^a h(r,x)\, dx  = 2\frac{a}{r} \overline{h}(r)-  2 h_0 \left( \frac{r}{a} \right)  -2 \lim_{\epsilon \to 0} J_\epsilon(r).
$$
It is easy to show by repeated integrations by parts that $\left| \displaystyle h_0 (x) \right| \lesssim |x|^N$ for all $N$, thus in order to prove the desired result it suffices to show that $\lim_{\epsilon \to 0} J_\epsilon(r) = -\frac{1}{2}$.

 It is well known that, for $p>0$, the $p$-periodic function $\vartheta_p(x)= \frac{x}{p}-\floor{\frac xp}$ has a Fourier series
 \[
\vartheta_p(x)=  \frac{1}{2} + \sum_{\ell \neq 0} \frac{1}{2\pi i \ell} e \left(\frac{-\ell x}{p} \right) 
 \]
Therefore for a fixed $r$ and $\epsilon$ sufficiently small we have 
\[
J_\epsilon(r)=   \int \frac{1}{s^2} \omega_0  \left( \frac{1}{s} \right)  \sum_{\ell \neq 0}\frac{1}{2\pi i \ell } e \left( \frac{-\ell s \epsilon}{r} \right)\,ds =   \int \frac{1}{s^2} \omega_0  \left( \frac{1}{s} \right)\left(\vartheta_r(\epsilon s)-\frac 12\right) ds,
\]
 and since $1 <s < 2$ we conclude that 
\begin{align*}
\lim_{\epsilon \to 0}J_\epsilon(r) = \lim_{\epsilon \to 0}  \int \frac{1}{s^2} \omega_0  \left( \frac{1}{s} \right) \left(\frac{\epsilon s}{r}- \frac 12\right) \,ds = - \frac 12,
\end{align*}
which concludes the proof for $n=0$.

For $n\in \mathbb{N}$, we need to show that 
\[
\int_{-a}^a x^n h(r,x)\, dx = \frac{\left(a^{n+1}-(-a)^{n+1}\right)}{(n+1)r} \overline{h}(r) - \frac{1}{r} \int_{-a}^ax^n \widetilde{h} \left( \frac{r}{x} \right)\,dx  = O(r^N).
\]  
Notice that the last integral is proper because $\widetilde h(x)$ is compactly supported. Since $\overline h(r) = O(r^N)$, we turn our attention to 
\[
 \int_{-a}^ax^n \widetilde{h} \left( \frac{r}{x} \right)\,dx =\int_{-a}^a x^n\left[ \sum_{\ell \neq 0} \int \frac{1}{s} \omega_0 \left( \frac{1}{s} \right) e \left( \frac{-\ell s x}{r} \right) \,ds\right]\, dx.
 \]
 First integrating $n$ times by parts in $s$ and using the compact support of $w_0$ we obtain up to constants
\[
\int_{-a}^a \sum_{\ell \neq 0} \int \frac{r^{n}}{\ell^{n}}  \left[\frac{d^n}{ds^n} \left( \frac{1}{s} \omega_0 \left( \frac{1}{s} \right)\right)\right]e \left( \frac{-\ell s x}{r} \right)\, dsdx.
\]
 Then integrating in $x$ and using the compact support of $\omega_0$  and that $n\geq 1$, we obtain up to constants
  \[
 \sum_{\ell \neq 0} \int \frac{r^{n+1}}{\ell^{n+1}}s^{-1}e \left( \frac{\pm \ell s a}{r} \right)  \left[\frac{d^n}{ds^n} \left( \frac{1}{s} \omega_0 \left( \frac{1}{s} \right)\right)\right]\, ds,
 \]
 which is of $O(r^N)$ by repeated integration by parts.
\end{proof}

From this lemma one can conclude that as $r\to 0$ the function $h_\chi(r,\cdot) \to \delta$.
\begin{corollary}\label{cor:dirac}
Let $f: (-\frac 12, \frac 12)\to \RR$ be locally Lipschitz, then
\[
\left |\int h_\chi(r,x)f(x) \, dx - f(0)\right | \lesssim r\|f\|_{Lip}.
\]
\end{corollary}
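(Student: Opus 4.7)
My approach is to split the error into a ``main'' and a ``normalization'' piece by writing $f(x) = f(0) + [f(x)-f(0)]$, so that
\[
\int h_\chi(r,x) f(x)\,dx - f(0) = \int h_\chi(r,x)\bigl[f(x)-f(0)\bigr]\,dx + f(0)\left[\int h_\chi(r,x)\,dx - 1\right],
\]
and to show that each piece is $O(r\|f\|_{Lip})$. For the first piece the Lipschitz bound $|f(x)-f(0)|\le \|f\|_{Lip}|x|$ reduces matters to showing
\[
\int |x|\,|h_\chi(r,x)|\,dx \lesssim r.
\]

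\textbf{Bounding the weighted $L^1$ integral of $h_\chi$.} The natural split is at $|x| = r/2$, where the dichotomy of Lemma \ref{hummingbird} changes character. On the inner region $|x|<r/2$, part (i) gives $|h(r,x)|\lesssim 1/r$, and the trivial estimate $\int_{|x|<r/2}|x|\,dx \sim r^2$ produces a contribution of order $r$. On the outer region $r/2 \le |x| < 1/2$ (where $\chi$ is supported), part (ii) yields
\[
|h(r,x)| \le \tfrac{1}{r}|\overline h(r)| + \tfrac{1}{r}\bigl|\widetilde h(r/|x|)\bigr| \lesssim r^{N-1} + \tfrac{r^{N-1}}{|x|^N}
\]
for any $N$; taking $N=3$ gives $\int_{r/2}^{1/2} |x|\cdot r^{N-1}|x|^{-N}\,dx \lesssim r^{N-1}\cdot r^{2-N} \lesssim r$. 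Since $|\chi|\lesssim 1$, these two estimates combine to yield $\int|x|\,|h_\chi(r,x)|\,dx\lesssim r$, which handles the main piece.

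\textbf{Controlling the normalization error.} For the second piece I need $\int h_\chi(r,x)\,dx = 1 + O(r)$. Writing $\chi(x) = 1 + (\chi(x)-1)$ and using that $\chi\in C^\infty_0(-1/2,1/2)$ with $\chi(0)=1$ gives $|\chi(x)-1|\lesssim |x|$ on the support, so
\[
\int \chi(x) h(r,x)\,dx = \int_{-1/2}^{1/2} h(r,x)\,dx + O\!\left(\int |x|\,|h(r,x)|\,dx\right).
\]
The first integral equals $1 + O(r^N)$ by Lemma \ref{hummingbird}(iii) (taking $n=0$, $a=1/2$), and the remainder is $O(r)$ by the estimate just proven. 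Dividing by $C_L = 1 + O(L^{-N})$ is harmless. Using (as is standard) that the $Lip$-norm controls $|f(0)|$ (i.e.\ includes the $L^\infty$ norm), this piece is bounded by $|f(0)|\cdot O(r)\lesssim r\|f\|_{Lip}$.

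\textbf{Main obstacle.} The only real subtlety is the near-diagonal singularity of $h$: on $|x|<r/2$ the function $h(r,x)$ is of size $1/r$, exactly the threshold at which $\int h\,dx$ is $O(1)$ and $\int |x| h\,dx$ is $O(r)$. The clean $O(r)$ rate is therefore not a margin — it is sharp — and the proof reduces to carefully tracking this balance via the two complementary representations (i) and (ii) of $h$ in Lemma \ref{hummingbird}. Once those are in hand, the rest is routine.
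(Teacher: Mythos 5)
Your proof is correct and follows essentially the same route as the paper: both rely on the key bound $\int |x|\,|h(r,x)|\,dx \lesssim r$ obtained by splitting at $|x|=r/2$ and invoking Lemma~\ref{hummingbird}~(i) on the inner region and (ii) on the outer region, together with the normalization identity from Lemma~\ref{hummingbird}~(iii). The only cosmetic difference is that the paper groups $\chi(x)f(x)-f(0)$ into a single $O(|x|)$ expression, whereas you handle $f(x)-f(0)$ and $\chi(x)-1$ separately; the arithmetic is identical.
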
 
\begin{proof}   Assume $\|f\|_{Lip}=1$.  From  Lemma \ref{hummingbird} $(iii)$,  we have 
\[
\int h_\chi(r,x)f(x) \, dx - f(0) = \int_{-\frac 12}^{\frac 12} \left[\chi(x)f(x) -f(0)\right]h(r,x)\, dx + O(r^Nf(0))
\]
Since $\chi$ is smooth with $\chi(0)=1$, $f$ is Lipschitz,  and $h$ is even, then
\[
 \int_{-\frac 12}^{\frac 12} \left|\chi(x)f(x) -f(0)\right | |h(r,x)|\, dx \lesssim  \int_{0}^{\frac r2}|xh(r,x)|\, dx 
 + \int_{\frac r2}^{\frac 12}|xh(r,x)|\, dx
 \]

By Lemma \ref{hummingbird} \emph{(i)} the first integrand is bounded by a multiple of $x/r$, and by $(ii)$ the second integrand is bounded by $\frac{x}{r}(r^N +  |r/x|^N)$.  This finishes the proof.
\end{proof}
\begin{corollary}\label{cor:Fh}
The Fourier transform of $h_\chi$ in $x$ satisfies
\begin{equation}\label{eq:hhatb}
\left |\widehat{h}_\chi(r,s)\right | \lesssim \frac 1{1+|rs|^N},
\end{equation}
for any positive integer $N$.  Moreover as $r\to 0$,  $\widehat{h}_\chi(r,s)\to 1$ uniformly on compact sets.
\end{corollary}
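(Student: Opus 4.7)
The plan is to prove the two claims separately. The second (convergence) follows immediately by applying Corollary \ref{cor:dirac} to $f(y) = e(-sy)$, whose Lipschitz constant is $\lesssim 1+|s|$ and whose value at $0$ is $1$:
\[
|\widehat{h}_\chi(r,s) - 1| \;\lesssim\; r(1+|s|),
\]
which tends to zero uniformly on any compact $s$-set. The remainder of the argument is devoted to the decay bound \eqref{eq:hhatb}.

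For \eqref{eq:hhatb} I would split into two regimes. When $|rs| \lesssim 1$, the bound $|\widehat{h}_\chi(r,s)| \lesssim 1$ is immediate from $\sup_r \|h_\chi\|_{L^1_y} \lesssim 1$, which is part of Lemma \ref{hummingbird}(ii). When $|rs| \geq 1$ one must extract polynomial decay, and a direct integration by parts in $y$ fails because $h_\chi$ varies on scale $r$. My strategy is to rescale first: with $y = rt$ one obtains
\[
\widehat{h}_\chi(r,s) = \widehat{g}_r(rs), \qquad g_r(t) := \frac{r\,\chi(rt)}{C_L}\,h(r,rt),
\]
and then integrate by parts in $t$. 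The task reduces to the uniform-in-$r$ estimate $\|\partial_t^k g_r\|_{L^1(\mathbb{R})} \lesssim_k 1$, since that yields $|\widehat{g}_r(\sigma)| \lesssim_k |\sigma|^{-k}$ for $|\sigma| \geq 1$, and hence \eqref{eq:hhatb} upon setting $\sigma = rs$.

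The key structural input — which the Poisson-summation computations in the proof of Lemma \ref{hummingbird}(ii) already furnish after relabelling — is the identity
\[
r\,h(r,rt) \;=\; H_0(r) - F(t), \qquad H_0(r) := \sum_{j\geq 1}\tfrac{\omega_0(rj)}{j}, \qquad F(t) := \sum_{j\geq 1}\tfrac{\omega_0(|t|/j)}{j},
\]
in which the $r$- and $t$-dependences decouple. The function $F$ is smooth on $\mathbb R$: it vanishes identically on $|t| < 1/2$ because $\omega_0$ is supported in $(1/2,1)$, and for $|t| > 1/2$ the same Poisson-summation argument that produced $\widetilde{h}$ in Lemma \ref{hummingbird}(ii) gives the closed form $F(t) = C_\omega + \widetilde{h}(1/|t|)$ with $C_\omega = \int \omega_0(s)/s\,ds$. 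The rapid decay $|\widetilde{h}^{(m)}(z)| \lesssim z^N$ then forces $F^{(k)}$ to decay faster than any polynomial at infinity, so $\|F^{(k)}\|_{L^1(\mathbb R)} \lesssim 1$ for every $k \geq 1$. Expanding $\partial_t^k g_r$ by Leibniz, each term is controlled uniformly: the contributions where the derivatives hit $F$ are absorbed by $\|F^{(k-i)}\|_{L^1}\lesssim 1$ (paired with $\|\chi\|_\infty$ when $i=0$), while the contributions where the derivatives hit $\chi(rt)$ supply an $L^1$ factor $\int |r^i \chi^{(i)}(rt)|\,dt = r^{i-1}\|\chi^{(i)}\|_{L^1}\lesssim 1$ (paired with the bounded function $\partial_t^{k-i}[H_0-F]$). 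This gives $\|\partial_t^k g_r\|_{L^1}\lesssim_k 1$ uniformly in $r\in (0,1)$.

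The hardest step is precisely this uniform-in-$r$ $L^1$ control: a naive dyadic bound on $F^{(k)}$ that treats each $j$-term separately produces unwanted logarithmic losses of size $\log(1/r)$. Avoiding them requires the closed-form identity $F - C_\omega = \widetilde{h}(1/|\cdot|)$ combined with the rapid-decay estimate $|\widetilde{h}^{(m)}(z)|\lesssim z^N$ from Lemma \ref{hummingbird}(ii); this is the one subtle point of the argument and is exactly where the Poisson-summation refinement underlying the construction of $h$ pays off.
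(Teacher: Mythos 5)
Your proof is correct, but it takes a genuinely different route from the paper. The paper proves \eqref{eq:hhatb} by integrating by parts $N$ times directly in $y$ and estimating
\[
\frac{1}{s^N}\int_{r/2}^1\left|\partial_y^N\Bigl(\tfrac{\chi(y)}{r}\widetilde h\bigl(\tfrac{r}{y}\bigr)\Bigr)\right|\,dy
\;\lesssim\;\frac{1}{s^N}\int_{r/2}^1\frac{dy}{r\,y^{N}}\;\lesssim\;\frac{1}{(rs)^N},
\]
i.e.\ it lets $\|\partial_y^N h_\chi(r,\cdot)\|_{L^1_y}$ grow like $r^{-N}$ and pairs that growth with the $s^{-N}$ from integration by parts. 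You instead rescale $y=rt$ so that the rescaled kernel $g_r$ has $r$-uniform $L^1$ bounds on all of its $t$-derivatives, which is achieved through the decoupling $rh(r,rt)=H_0(r)-F(t)$ and the closed form $F - C_\omega = \widetilde h(1/|\cdot|)$. Both arguments exploit the same underlying structure from Lemma \ref{hummingbird}(ii); yours is slightly longer but makes the $r$-uniformity explicit before integrating by parts, while the paper's is shorter but hides the $r$-scaling in the final integral. One inaccuracy to flag: your claim that ``a direct integration by parts in $y$ fails because $h_\chi$ varies on scale $r$'' is overstated — that is precisely what the paper does, and it succeeds; what one must not do is expect $\|\partial_y^N h_\chi\|_{L^1_y}$ to be $O(1)$, which your rescaling circumvents. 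Finally, your derivation of the convergence $\widehat h_\chi(r,s)\to 1$ via Corollary \ref{cor:dirac} with $f(y)=e(-sy)$ is clean and complete; the paper's proof of Corollary \ref{cor:Fh} does not actually spell this part out.
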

\begin{proof}
Since $\|h_\chi\|_{L^1_y} \lesssim 1$, then  $\|\widehat{h}_\chi\|_{L^\infty} \lesssim 1$.  Moreover, by integration by part $N$ times in $y$ on
\[
\widehat{h}_\chi(r,s) = \int  h_\chi(r,y)e(-sy)\, dy ,
\]
and using Lemma \ref{hummingbird} \emph{(ii)}, we obtain the bound in terms of 
\[
\frac 1{s^N}\int_{r/2}^1\left | \partial_y^N \frac{\chi(y)}{r} \widetilde h \left( \frac{r}{y} \right)\right| \, dy\lesssim \frac 1{s^N}\int_{r/2}^1 \frac 1{ry^N}\, dy \lesssim  \frac 1{(sr)^N},
\]
which gives the desired bound.
\end{proof}

\section{The arithmetic function $S(q,c)$}

\subsection{An upper bound}

\begin{lemma} \label{c:trivial_sq_est}   The arithmetic function 
\[
S_{\mu L^2}(q,c) =  \sum_{\substack{a=0 \\(a,q)=1}}^{q-1}\sum_{\substack{b_i =0 \\ 1\le i \le d}}^{q-1} e \left(\frac{a Q_{\mu L^2}(b) + c\cdot b}{q}\right)
\]
is bounded by
\begin{equation}\label{eq:sqcb}
\left |S_{\mu L^2}(q,c) \right | \le C |q|^{\frac d2 +1},
\end{equation}
where the constant $C$ is independent of $\mu L^2$.
\end{lemma}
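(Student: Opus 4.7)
My plan is to prove the bound by a standard Weyl-type squaring argument, reducing the estimate on $S_{\mu L^2}(q,c)$ to a uniform bound on an inner Gauss-type sum, and then counting kernel elements of an integer linear map.

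First I would separate out the piece that depends on $\mu L^2$. Since $Q_{\mu L^2}(b) = Q(b) - \mu L^2$, one can factor
\[
S_{\mu L^2}(q,c) = \sum_{\substack{a=0 \\ (a,q)=1}}^{q-1} e\!\left(\tfrac{-a\mu L^2}{q}\right) G(a,c;q), \qquad G(a,c;q) \overset{def}{=} \sum_{b \in (\Z/q\Z)^d} e\!\left(\tfrac{aQ(b)+c\cdot b}{q}\right).
\]
The phase $e(-a\mu L^2/q)$ has modulus one, which both explains why the constant is independent of $\mu L^2$ and reduces the problem to showing $|G(a,c;q)| \leq C|q|^{d/2}$ uniformly for $(a,q)=1$, with $C=C(Q)$. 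Summing over the at most $q$ admissible values of $a$ then yields the claimed $q^{d/2+1}$ bound.

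Next I would bound $|G(a,c;q)|$ by the Weyl squaring trick. Letting $A$ denote the symmetric matrix associated to $Q$ (so $Q(x)=x^{T}Ax$ and $2A$ has integer entries), expand
\[
|G(a,c;q)|^2 = \sum_{b,b' \in (\Z/q\Z)^d} e\!\left(\tfrac{a(Q(b)-Q(b'))+c\cdot(b-b')}{q}\right),
\]
substitute $b=b'+h$, and use $Q(b'+h)-Q(b') = Q(h)+2(b')^{T}Ah$ to obtain
\[
|G(a,c;q)|^2 = \sum_{h \in (\Z/q\Z)^d} e\!\left(\tfrac{aQ(h)+c\cdot h}{q}\right) \sum_{b' \in (\Z/q\Z)^d} e\!\left(\tfrac{2a(Ah)\cdot b'}{q}\right).
\]
The inner sum equals $q^d$ when $2aAh \equiv 0 \pmod q$ as a vector in $(\Z/q\Z)^d$, and vanishes otherwise. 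Since $(a,q)=1$, this condition is equivalent to $2Ah \equiv 0 \pmod q$, so
\[
|G(a,c;q)|^2 \leq q^d \cdot \#\bigl\{ h \in (\Z/q\Z)^d : 2Ah \equiv 0 \pmod q \bigr\}.
\]

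Finally I would bound this kernel count by a constant depending only on $Q$. Writing $2A = UDV$ in Smith normal form with $U,V \in GL_d(\Z)$ and $D=\mathrm{diag}(d_1,\dots,d_d)$, the change of variables $h \mapsto Vh$ is a bijection of $(\Z/q\Z)^d$, reducing to the count of diagonal solutions $d_i h_i \equiv 0 \pmod q$, which is $\prod_i \gcd(q,d_i) \leq \prod_i d_i = |\det(2A)| = 2^d|\det A|$. Non-degeneracy of $Q$ ensures $\det A \neq 0$, so this count is bounded by $C(Q)$, independent of $q$. Thus $|G(a,c;q)| \leq C(Q)^{1/2} q^{d/2}$, and summing over $a$ gives the result.

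I do not expect any real obstacle: the only mild point of care is to carry the factor of $2$ through the squaring step so that $2A$ (rather than $A$, which may have half-integer entries) is the integer matrix to which Smith normal form is applied. Everything else is a direct computation, and the argument gives the bound uniformly in $c$ and $\mu L^2$, as required.
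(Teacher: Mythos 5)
Your proof is correct, and it is essentially the same argument as the paper's, differing only in presentation. The paper applies Cauchy--Schwarz over the $a$-sum first, which gives $|S|^2 \le \phi(q)\sum_a |G_a|^2$ and then bounds each $|G_a|^2$ by the same Weyl-squaring/kernel-count step you use; you instead pull out the inner Gauss sum $G(a,c;q)$, square it directly, and then apply the triangle inequality over $a$. These are interchangeable and yield the identical bound $Cq^{d/2+1}$. The one substantive place where you go beyond the paper is that the paper simply asserts that the number of $v$ with $q\mid\nabla Q(v)$ (equivalently $2Av\equiv 0 \pmod q$) is bounded by a constant depending only on $Q$, whereas you justify this by Smith normal form, giving the explicit bound $|\det(2A)|$; your care about working with $2A$ rather than $A$ to keep integer entries is exactly the right caution. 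So there is no gap, and nothing to change.
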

\proof Applying Cauchy Schwartz to the sum in $a$ we get
\[
\left |S_{\mu L^2}(q,c) \right |^2 \le \phi(q)  
   \sum_{\substack{a=0 \\(a,q)=1}}^{q-1}\sum_{\substack{b_i =0 \\ 1\le i \le d}}^{q-1}  \sum_{\substack{\tilde{b}_i =0 \\ 1\le i \le d}}^{q-1}
 e \left(\frac{a( Q_{\mu L^2}(b) -  Q_{\mu L^2}(\tilde{b}) )+ c\cdot( b- \tilde{b})}{q}\right)
\]
where $\phi$ is the Euler's totient function. Substituting $b= \tilde{b} + v$, we obtain using the $q-$periodicity of the summand
\begin{align*}
&\sum_{\substack{b_i =0 \\ 1\le i \le d}}^{q-1}  \sum_{\substack{\tilde{b}_i =0 \\ 1\le i \le d}}^{q-1}e \left(\frac{a( Q_{\mu L^2}(b) -  Q_{\mu L^2}(\tilde{b}) )+ c\cdot( b- \tilde{b})}{q}\right)\\
&\qquad\qquad\qquad\qquad= \sum_{\substack{v_i =0 \\ 1\le i \le d}}^{q-1}  \sum_{\substack{\tilde{b}_i =0 \\ 1\le i \le d}}^{q-1}
 e \left(\frac{a Q(v) + v\cdot c}{q}\right)e \left(\frac{a\tilde{b}\cdot \nabla Q(v)}{q}\right).
\end{align*}
 Therefore the sum in $\tilde{b}$ is zero unless $q\mid \nabla  Q(v)$, which is bounded by a constant depending on $ Q$ only, and consequently  the sum in $b$ and  $\tilde{b}$ is bounded by $O(q^d)$, which implies
 \[
 \left |S_{\mu L^2}(q,c) \right |^2 \le C \phi(q)^2 q^d,
 \] 
 and this proves the lemma.
\endproof

\subsection{Formulas for $S(q,c)$}

We considered until now general quadratic forms $Q$, but we will now focus on the specific quadratic form given by the resonance modulus of~\eqref{NLS}.

Namely, for a fixed $K \in \mathbb{R}^{n}$, and for $(K_1,K_2,K_3) \in (\mathbb{R}^{n})^3$,
$$
\Omega_3(K_1,K_2,K_3,K) = |K_1|^2 - |K_2|^2 + |K_3|^2 - |K|^2,
$$
restricted to the subset given by the condition that frequencies add up to zero
$$
K_1 - K_2 + K_3 - K = 0.
$$
This constraint implies that the quadratic form is defined on a vector space of dimension $2n$, or in other words, following the notation of the previous section,
$$
d = 2n.
$$
To take the constraint above into account, define the new coordinates $(z_1,z_2) \in \mathbb{R}^{2n}$ by
$$
\left\{
\begin{array}{l}
z_1 = K_1 - K \\
z_2 = K_3 - K,
\end{array}
\right.
$$
for which the quadratic form takes the simple form
$$
\Omega_3(K_1,K_2,K_3,K) = - 2 z_1 \cdot z_2 \overset{def}{=} \omega(z_1,z_2).
$$

Going back to the arithmetic function, we start  by noting that $S(q,c)$ can be expressed as a Ramanujan sum.  Using the explicit form of $\omega$, we have
\begin{align*}
S(q,c) &=  \sum_{\substack{a=0 \\(a,q)=1}}^{q-1}\sum_{0\leq z_i \leq q-1} e \left(\frac{a \omega(z) + c\cdot z}{q}\right)\\&=
\sum_{\substack{a=1\\(a,q)=1}}^{q-1}\prod_{m=1}^{\frac{d}{2}}\sum_{x =0}^{q-1}\sum_{y =0}^{q-1} e\left(\frac{axy+x c_m+yc_{m+\frac d2}}{q}\right).
\end{align*}
The sum in $x$ is zero unless 
 $q\mid ay+c_m$.   Then the sum in $x$ yields, 
\[
qe\left(\frac{yc_{m+\frac d2}}q\right)=qe\left(\frac{-a^*c_mc_{m+\frac d2}}q\right),
\]
where $a^*$ denotes the multiplicative inverse of $a$ modulo $q$.
Utilizing the symmetry of the sum we obtain,
\begin{equation}
S(q,c)=\sum_{\substack{a=1\\(a,q)=1}}^{q-1}\prod_{m=1}^{\frac{d}{2}}qe\left(\frac{-a^*c_mc_{m+\frac d2}}q\right)=\sum_{\substack{a=1\\(a,q)=1}}^{q-1}q^{\frac d2}e\left(\frac{a  \omega(c)}q\right)=q^{\frac d2}c_q( \omega(c))\label{e:sqc_formula}
\end{equation}
where $c_{q}(m)$ is the Ramanujan sum $c_{q}(m)$ defined by
\[
c_{q}(m):=\sum_{\substack{x=1\\(x,q)=1}}^{q-1}e\left(\frac{mx}{q}\right)~.
\]
Since Ramanujan sums are multiplicative, we immediately obtain the following lemma.
\begin{lemma}\label{l:mult_sqc}
If $(u,v)=1$ then
\[
S(uv,c)=S(u,c)S(v,c). 
\]
\end{lemma}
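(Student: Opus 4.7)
The plan is to reduce the statement to the classical fact that the Ramanujan sum $c_q(m)$ is multiplicative in $q$, and then simply multiply by the prefactor $q^{d/2}$.

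First I would invoke the explicit formula \eqref{e:sqc_formula} derived just before the lemma, namely
\[
S(q,c) = q^{d/2}\, c_q(\omega(c)).
\]
Since $(u,v)=1$ implies $(uv)^{d/2} = u^{d/2} v^{d/2}$, the lemma reduces to showing $c_{uv}(m) = c_u(m)\, c_v(m)$ for every integer $m$ (applied to $m=\omega(c)$).

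Next I would prove the multiplicativity of Ramanujan sums by the Chinese Remainder Theorem. Choose $u^*, v^*$ with $u u^* \equiv 1 \pmod{v}$ and $v v^* \equiv 1 \pmod{u}$. Then the map
\[
(x_1, x_2) \longmapsto x \equiv v v^* x_1 + u u^* x_2 \pmod{uv}
\]
is a bijection from $(\Z/u\Z)^\times \times (\Z/v\Z)^\times$ onto $(\Z/uv\Z)^\times$, and it satisfies $x \equiv x_1 \pmod u$ and $x \equiv x_2 \pmod v$. Substituting in the definition of $c_{uv}(m)$ and splitting the exponential,
\[
\frac{m x}{uv} \equiv \frac{m v^* x_1}{u} + \frac{m u^* x_2}{v} \pmod{1},
\]
the double sum factors as
\[
c_{uv}(m) = \Big(\sum_{(x_1, u) = 1} e(m v^* x_1 / u)\Big)\Big(\sum_{(x_2, v) = 1} e(m u^* x_2 / v)\Big) = c_u(m v^*)\, c_v(m u^*).
\]
Finally, since $(v^*, u) = 1$, the change of variables $x_1 \mapsto v^{*-1} x_1$ in $(\Z/u\Z)^\times$ gives $c_u(m v^*) = c_u(m)$, and analogously $c_v(m u^*) = c_v(m)$. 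Combining yields $c_{uv}(m) = c_u(m)\, c_v(m)$, and multiplying by $(uv)^{d/2} = u^{d/2} v^{d/2}$ completes the proof.

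There is no real obstacle here; the only point requiring a bit of care is verifying that the Chinese Remainder parametrization correctly separates the exponent $mx/(uv)$ and that the auxiliary factors $v^*, u^*$ can be absorbed by the invariance $c_q(mt) = c_q(m)$ for $(t,q)=1$. Both are routine consequences of CRT.
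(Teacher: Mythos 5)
Your proof is correct and follows essentially the same route as the paper: both reduce via the formula $S(q,c)=q^{d/2}c_q(\omega(c))$ to the multiplicativity of Ramanujan sums. The only difference is that the paper simply cites this multiplicativity as a standard fact, while you supply the (correct, routine) CRT argument establishing it.
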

Moreover, since for any prime $p$ we have the explicit formula\footnote{The elementary properties of Ramanujan sums can been found in many standard number theory textbooks: see for example \cite{HW}.}
\begin{equation}\label{e:ramanujan_sum}
c_{p^j}(a)=\begin{cases}
0 & \text{if }p^{j-1}\nmid a,\\
-p^{j-1} &  \text{if } p^{j-1}\mid a \text{ and }  p^j \nmid a~,\\
 p^{j-1}(p-1) & \text{if } p^j \mid a~, \end{cases}
\end{equation}
we deduce from equations \eqref{e:sqc_formula} and \eqref{e:ramanujan_sum}  the following formula for $S(q,c)$ in case $q$ is a power of a prime.
\begin{lemma}\label{l:sp_t}
Assume $p$ is prime, then for any integer $j\geq 0$, the arithmetic function $S(p^j,c)$ may be written as
\begin{subnumcases} 
{S(p^j,c)=}
0 &  $p^{j-1}\nmid  \omega(c)$  \label{e:spt_eval-1}\\
-p^{\frac{dj}{2}+j-1} &  $p^{j-1}\mid  \omega(c) \text{ and }  p^j \nmid  \omega(c)$   \label{e:spt_eval-2}\\
 p^{\frac{dj}{2}+j-1}(p-1) & $p^j \mid  \omega(c)$  \label{e:spt_eval-3}
 \end{subnumcases} 
\end{lemma}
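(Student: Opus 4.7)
The plan is to observe that this lemma is an immediate consequence of the two formulas \eqref{e:sqc_formula} and \eqref{e:ramanujan_sum} just established. The genuine content --- the identification of $S(q,c)$ with a classical Ramanujan sum --- was carried out above using the bilinear structure $\omega(z_1, z_2) = -2 z_1 \cdot z_2$ of the quadratic form $\omega$ and additive character orthogonality (the sum over $x$ forced the congruence $q \mid ay + c_m$, then summing over $y$ and absorbing the multiplicative inverse $a^*$ produced a single character evaluated at $\omega(c)$). It therefore remains only to specialize $q$ to a prime power and read off the resulting Ramanujan sum from \eqref{e:ramanujan_sum}.

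Concretely, I would first set $q = p^j$ in \eqref{e:sqc_formula} to obtain
\[
S(p^j, c) \;=\; p^{dj/2}\, c_{p^j}\bigl(\omega(c)\bigr),
\]
and then substitute the three cases of \eqref{e:ramanujan_sum} with $a = \omega(c)$. Multiplying the Ramanujan sum value by the prefactor $p^{dj/2}$ reproduces the three subequations in the lemma: in the first case the Ramanujan sum vanishes, yielding \eqref{e:spt_eval-1}; in the second case $c_{p^j}(\omega(c)) = -p^{j-1}$, producing the factor $-p^{dj/2 + j - 1}$ of \eqref{e:spt_eval-2}; and in the third case $c_{p^j}(\omega(c)) = p^{j-1}(p-1)$, producing $p^{dj/2 + j - 1}(p-1)$ as in \eqref{e:spt_eval-3}.

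There is no real obstacle; the only small point worth flagging is the degenerate case $j=0$, where $p^j = 1$, all the sums defining $S(1,c)$ collapse to the single term $e(0) = 1$, and the formula is read in the third branch with the standard convention $c_1(\omega(c)) = 1$. Everything else is a routine matching of cases, so the proof should amount to little more than citing \eqref{e:sqc_formula} and \eqref{e:ramanujan_sum} and performing this case-by-case substitution.
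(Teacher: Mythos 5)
Your proposal is correct and is exactly the paper's own (implicit) argument: the paper states that Lemma \ref{l:sp_t} is deduced immediately from \eqref{e:sqc_formula} and \eqref{e:ramanujan_sum}, which is precisely the specialization $q=p^j$ and case-by-case substitution you carry out. The remark on the degenerate case $j=0$ is a harmless but reasonable addition.
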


\subsection{Sums of $S(q,c)$ for $d \geq 6$}

We now compute certain sums of $S(q,c)$ that will appear in our asymptotic formulas for weighted lattice sums, considering first the case $d \geq 6$.

In this case we would like to compute 
\[
\sum_{q=1}^{\infty}q^{-d}S(q,0)~,
\]
which by Lemma \ref{c:trivial_sq_est} converges absolutely. Using the multiplicative nature of $S(q,c)$ (Lemma \ref{l:mult_sqc}), we have
\[
\sum_{q=1}^{\infty}q^{-d}S_q(0)=\prod_p \sum_{j=0}^{\infty}p^{-dj}S(p^j,0).
\]

From Lemma \ref{l:sp_t} it follows that for any prime $p$
\begin{equation}
 \sum_{j=0}^{\infty}p^{-dj}S(p^j,0)=1+\sum_{j=1}^{\infty}p^{-jd/2+t-1}(p-1)=\frac{p^{\frac{d}2}-1}{p^{\frac{d}2}-p}=\frac{1-p^{-\frac{d}2}}{1-p^{\frac{2-d}2}}~.
\end{equation}

Hence applying the Euler product formula for the Riemann zeta function:
\[\prod_p^{\infty}\left(1-\frac 1 {p^s}\right)=\frac{1}{\zeta(s)},\]
we obtain the following formula:
\begin{lemma}\label{weightedSqcSum}
For $d\geq 6$ we have
\[\sum_{q=1}^{\infty}q^{-d}S(q,0)=\frac{\zeta\left(\frac{d-2}{2}\right)}{\zeta\left(\frac{d}{2}\right)}~.\]
\end{lemma}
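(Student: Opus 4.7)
The plan is to evaluate the sum as an Euler product, using the multiplicativity of $S(q,0)$ and the explicit prime-power evaluation from Lemma~\ref{l:sp_t}, and then recognize the result as a ratio of Riemann zeta values. First I would verify absolute convergence: by the bound $|S(q,0)| \lesssim q^{d/2+1}$ from Lemma~\ref{c:trivial_sq_est}, the series $\sum_q q^{-d} S(q,0)$ is bounded by $\sum_q q^{-d/2+1}$, which converges when $d/2 - 1 > 1$, i.e.\ $d \geq 5$, hence certainly for $d \geq 6$. Absolute convergence together with Lemma~\ref{l:mult_sqc} legitimizes the factorization
\[
\sum_{q=1}^{\infty} q^{-d} S(q,0) \;=\; \prod_p \sum_{j=0}^{\infty} p^{-dj} S(p^j,0).
\]

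Next I would compute each local factor. Since $c = 0$ we have $\omega(c) = 0$, so case \eqref{e:spt_eval-3} of Lemma~\ref{l:sp_t} always applies for $j \geq 1$, giving $S(p^j,0) = p^{dj/2 + j - 1}(p-1)$, while $S(1,0) = 1$. Thus, setting $x = p^{1-d/2}$ (which satisfies $|x| < 1$ for $d \geq 4$), the local factor is
\[
1 + \frac{p-1}{p}\sum_{j=1}^{\infty} x^{j}
\;=\; 1 + \frac{(p-1)p^{-d/2}}{1 - p^{1-d/2}}
\;=\; \frac{1 - p^{-d/2}}{1 - p^{-(d-2)/2}},
\]
where the last equality is the algebraic simplification already recorded in the paragraph preceding the lemma.

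Finally, I would apply the Euler product identity $\prod_p (1 - p^{-s})^{-1} = \zeta(s)$, valid for $\mathrm{Re}(s) > 1$. For the denominator $1 - p^{-(d-2)/2}$ this requires $(d-2)/2 > 1$, i.e.\ $d > 4$, which is ensured by the hypothesis $d \geq 6$. Taking the product over all primes then yields
\[
\prod_p \frac{1 - p^{-d/2}}{1 - p^{-(d-2)/2}}
\;=\; \frac{\zeta\!\left(\tfrac{d-2}{2}\right)}{\zeta\!\left(\tfrac{d}{2}\right)},
\]
which is the desired formula. There is no genuine obstacle here; the only delicate point is making sure $d \geq 6$ (and not just $d \geq 5$) so that the zeta factor in the numerator is given by a convergent Euler product rather than by analytic continuation, so the manipulations can be carried out termwise and factor by factor without appealing to any deeper analytic continuation.
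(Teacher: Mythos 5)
Your proof is correct and follows essentially the same route as the paper: absolute convergence via the crude bound $|S(q,0)|\lesssim q^{d/2+1}$ from Lemma~\ref{c:trivial_sq_est}, factorization into an Euler product by multiplicativity (Lemma~\ref{l:mult_sqc}), evaluation of each local factor from the prime-power formula in Lemma~\ref{l:sp_t}, and identification with a ratio of zeta values via the Euler product. One small inaccuracy in your closing remark: the Euler product for $\zeta\!\left(\tfrac{d-2}{2}\right)$ already converges for $d\geq 5$ (since then $\tfrac{d-2}{2}\geq\tfrac{3}{2}>1$), so $d\geq 6$ is not what makes the termwise manipulation legitimate; the hypothesis $d\geq 6$ simply reflects the intended application where $d=2n$ with $n\geq 3$.
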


\subsection{Sums of $S(q,c)$ for $d = 4$}
If $d=4$, we distinguish two cases:
\begin{enumerate}
\item[(i)]\label{c:f0} {\it If $ \omega(c)=0$}, we remark that $S(q,c)=S(q,0)$. 
 For this case we will require asymptotic formulas on the partial sums
\begin{equation*}
M(X):=\sum_{q=1}^{X}q^{-4}S(q,0)\quad\text{and}\quad A(X):=\sum_{q=1}^{X}S(q,0)~,
\end{equation*} 
which both diverge as $X\rightarrow \infty$.
\item[(ii)] {\it If $ \omega(c)\neq 0$}, we require an asymptotic formula on
\begin{equation}\label{e:sqc_small}
A(X,c):=\sum_{q=1}^{X}S(q,c)~.
\end{equation}
\end{enumerate}

\vskip 3mm
\underline{Case (i) $ \omega(c)=0$.}  Let us consider the Dirichlet series
\[\Gamma(s):=\sum_{q=1}^{\infty}q^{-s}S(q,0)=\prod_p \sum_{j=0}^{\infty}p^{-js}S(p^j,0)~.\]
Writing $s=\sigma+it$, the sum is absolutely convergent for $\sigma> 4$. Note also the infinite sum
\[\sum_{j=0}^{\infty}p^{-js}S(p^j,0)~,\]
is convergent for $\sigma>3$. In particular we have
\[\sum_{j=0}^{\infty}p^{-js}S(p^j,0)=1+\sum_{j=1}^{\infty} p^{j(3-s)-1}(p-1)=\frac{1-p^{2-s}}{1-p^{3-s}}~.
\]
Therefore 
\begin{equation*}
\Gamma(s)=\frac{\zeta(s-3)}{\zeta(s-2)}~.
\end{equation*}

Recall $\zeta(s)$ is  bounded for $\sigma>1$.
In order to obtain asymptotic formulas for $M(X,c)$ and $A(X,c)$ we will apply well known arguments employing contour integration. Such arguments are typical in modern proofs of the Prime Number Theorem (cf.\ \cite{Apostol}).

By Perron's formula (see \cite{Apostol}, Theorem 11.18) we have for any half an odd integer $X$
\begin{align}\nonumber
M(X)&=\lim_{U\rightarrow \infty}\frac{1}{2\pi i}\int_{b-iU}^{b+iU}\Gamma(s+4)\frac{X^s}{s}~ds\\&=\lim_{U\rightarrow \infty}\frac{1}{2\pi i}\int_{b-iU}^{b+iU}\frac{\zeta(s+1)X^s}{s\zeta(s+2)}~ds~,\label{e:perron}
\end{align}
for any $b>0$.

We rewrite \eqref{e:perron} using the residue theorem
\begin{align*}
M(X)&=\lim_{U\rightarrow \infty} \frac{1}{2\pi i}\left[\int_{1-iU}^{1-iT}\dots+\int_{1-iT}^{-\frac12-iT}\dots+\int_{-\frac12-iT}^{-\frac12+iT}\dots+\int_{-\frac12+iT}^{1+iT}\dots+\int_{1+iT}^{1+iU}\dots\right]\\&\qquad+\res\left(\frac{\zeta(s+1)X^s}{s\zeta(s+2)},0\right)~,\end{align*}
where we set $T=X^{4}$. We will use throughout that on the domain of integration  $\zeta(s+2)$ is bounded from below.

To bound the integrals $\lim_{U\rightarrow \infty} \frac{1}{2\pi i}\left[\int_{1-iU}^{1-iT}\dots+\int_{1+iT}^{1+iU}\dots\right]$ we will use the bound (see \cite{Apostol} Chapter 11, Lemma 4): if $c>0$, $a \neq 1$, and $T<U$,
\[\left| \int_{c-iU}^{c-iT}\frac{a^s}{s}~ds+\int_{c+iT}^{c+iU}\frac{a^s}{s}~ds\right| \lesssim \frac{a^{c}}{T\abs{\log a}}~. \]
Then expanding $\Gamma(s+4)$ and applying the above bounds we obtain
\begin{align*}
\left| \lim_{U\rightarrow \infty} \int_{1-iU}^{1-iT} + \int_{1+iT}^{1+iU}\Gamma(s+4)\frac{X^s}{s}~ds \right| &= \left| \lim_{U\rightarrow \infty}\sum_{q=1}^{\infty}q^{-4}S(q,0)\int_{1-iU}^{1-iT} + \int_{1+iT}^{1+iU} \left( \frac{X}{q}\right)^s\frac{1}{s}~ds \right| \\
&\lesssim \sum_{q=1}^\infty q^{-5} \abs{S(q,0)} \frac{X}{ T\abs{\log X-\log q}} \\
&\lesssim \sum_{q=1}^{\infty}q^{-5} \abs{S(q,0)} \frac{X}{T\abs{\log X-\log (X+\frac 12)}}
\\&\lesssim \sum_{q=1}^{\infty}q^{-5} \abs{S(q,0)} \frac{X^{2}}{T}
\\&\lesssim X^{- 2}~.
\end{align*}

To estimate the remaining integrals  will need some well known bounds on the Riemann zeta function which we state below for the reader's convenience:
\begin{lemma}\label{l:zeta_bnd}
 f $0<\delta<1$, $1-\delta\leq \sigma\leq 2$, and $\abs{t}\geq 1$ then there exists a constant $C$ depending of $\delta$ such that
\begin{equation}\label{e:riem_bnd_sigma_large}
\abs{\zeta(\sigma+it)}\leq C\abs{t}^{\delta}~.
\end{equation}
\end{lemma}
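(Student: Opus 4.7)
The plan is to derive the bound from a truncated Euler--Maclaurin representation for $\zeta$, with the truncation level depending on $|t|$. The starting identity, valid for $\sigma>1$ by direct summation and extended by analytic continuation to the half-plane $\sigma>0$, $s\neq 1$, reads: for every integer $N\geq 1$,
\begin{equation*}
\zeta(s) \;=\; \sum_{n=1}^{N} n^{-s} \;-\; \frac{1}{2 N^{s}} \;+\; \frac{N^{1-s}}{s-1} \;-\; s \int_{N}^{\infty} \frac{\{x\}-1/2}{x^{s+1}}\, dx.
\end{equation*}
This can be obtained from the Euler--Maclaurin summation formula with a single correction term, or equivalently from Abel's summation formula \eqref{e:Abel_summation} applied to $\sum_{n>N} n^{-s}$ together with the fact that $\{x\}-1/2$ has mean zero on unit intervals.

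First I would estimate the four terms on the right-hand side in the range $1-\delta \leq \sigma \leq 2$, $|t|\geq 1$. The finite Dirichlet sum is controlled by $\sum_{n\leq N} n^{-(1-\delta)} \lesssim N^{\delta}/\delta$, using $n^{-\sigma}\leq n^{-(1-\delta)}$. The boundary term has modulus $N^{-\sigma}/2\leq 1/2$. The polar term obeys $|N^{1-s}/(s-1)| \leq N^{\delta}/|t|$, using $|s-1|\geq |t|\geq 1$. Finally, inserting $|\{x\}-1/2|\leq 1/2$ in the tail integral gives
\begin{equation*}
\left| s \int_{N}^{\infty} \frac{\{x\}-1/2}{x^{s+1}}\, dx \right| \;\leq\; \frac{|s|}{2\sigma N^{\sigma}} \;\lesssim\; \frac{|t|}{(1-\delta)\, N^{1-\delta}}.
\end{equation*}

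Next, I would balance the two dominant contributions by choosing $N=\floor{|t|}\geq 1$, so that $N^{\delta} \lesssim |t|^{\delta}$ and $|t|/N^{1-\delta} \lesssim |t|^{\delta}$, while the remaining two terms contribute $O(1)$ and $O(|t|^{\delta-1})$ respectively. Adding the four estimates produces $|\zeta(\sigma+it)| \leq C\,|t|^{\delta}$, where $C$ depends only on $\delta$. All the $\delta$-dependent factors picked up along the way (such as $1/\delta$ and $1/(1-\delta)$) are fixed once $\delta$ is fixed and can be absorbed into $C$.

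There is no substantive obstacle. The only points of bookkeeping are verifying that the truncated Euler--Maclaurin representation is valid throughout the strip $\sigma>0$, which is immediate from absolute convergence of the remainder integral there, and confirming that the choice $N\sim |t|$ simultaneously controls the head of the Dirichlet sum and the tail integral. No subconvex bound for $\zeta$ is required, since the exponent $\delta$ in the conclusion is strictly positive.
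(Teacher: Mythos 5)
Your proof is correct. The paper itself does not give an argument for this lemma; it simply cites Apostol's Theorem 12.23, whose proof is precisely the truncated Euler--Maclaurin (or Abel-summation) representation with the balanced truncation $N \sim |t|$ that you carry out. So you have supplied the details the paper delegates to the reference; the approach is the same. All the intermediate estimates check out: the finite sum contributes $O_\delta(N^\delta)$, the half-integer correction is $O(1)$, the polar term is $O(N^\delta/|t|)$, and the tail integral is $O_\delta(|t| N^{-(1-\delta)})$, so $N=\floor{|t|}$ balances the first and fourth contributions and yields $O_\delta(|t|^\delta)$ uniformly over $1-\delta\le\sigma\le 2$.
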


\begin{lemma}\label{l:zeta_mean}
The zeta function satisfies the following mean estimate
\begin{equation}
\int_0^T\abs{\zeta\left(\frac12+it\right)}^2dt\lesssim T\log T~. \label{e:lindelhof}
\end{equation}
\end{lemma}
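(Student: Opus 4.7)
The plan is to reduce the bound on $\int_0^T |\zeta(1/2+it)|^2\,dt$ to a mean-square estimate on a Dirichlet polynomial, and then handle this by the standard diagonal/off-diagonal split.

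\textbf{Step 1: Approximate $\zeta$ by a truncated Dirichlet polynomial.} First I would invoke the Euler--Maclaurin summation formula (or, equivalently, Abel summation \eqref{e:Abel_summation} applied to $a_n=1$ with $\phi(u)=u^{-s}$) to obtain, for $\sigma>0$ and any $x\geq 1$,
\[
\zeta(s)=\sum_{n\leq x} n^{-s}+\frac{x^{1-s}}{s-1}+O\!\left(|s|\,x^{-\sigma-1}\right).
\]
Specializing to $s=1/2+it$ with $1\leq t\leq T$ and taking $x=T$, the error term is $O(T^{-1/2})$ and the boundary term $(T^{1/2-it})/(s-1)$ has modulus $\lesssim T^{1/2}/|t|$. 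Hence
\[
|\zeta(1/2+it)|^{2}\;\lesssim\;\Bigl|\sum_{n\leq T} n^{-1/2-it}\Bigr|^{2}+\frac{T}{t^{2}+1}+T^{-1}.
\]
Integrating over $t\in[0,T]$, the last two terms contribute $O(T)$ and $O(1)$ respectively, so it suffices to bound $\int_{0}^{T}\bigl|\sum_{n\leq T}n^{-1/2-it}\bigr|^{2}\,dt$ by $O(T\log T)$.

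\textbf{Step 2: Expand and separate diagonal from off-diagonal.} Squaring the Dirichlet polynomial and integrating gives
\[
\int_{0}^{T}\Bigl|\sum_{n\leq T}n^{-1/2-it}\Bigr|^{2}dt=\sum_{m,n\leq T}\frac{1}{\sqrt{mn}}\int_{0}^{T}\left(\frac{m}{n}\right)^{it}dt.
\]
The diagonal terms $m=n$ contribute $T\sum_{n\leq T}\tfrac{1}{n}\sim T\log T$, which is exactly the target. For the off-diagonal terms $m\neq n$, performing the $t$-integration yields the bound $\left|\int_{0}^{T}(m/n)^{it}dt\right|\leq 2/|\log(m/n)|$.

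\textbf{Step 3: Bound the off-diagonal contribution.} By symmetry it is enough to estimate
\[
\sum_{1\leq m<n\leq T}\frac{1}{\sqrt{mn}}\cdot\frac{1}{\log(n/m)}.
\]
Using $\log(n/m)\geq (n-m)/n$, setting $k=n-m$, and splitting the inner sum into $k\leq n/2$ (where $\sqrt{n-k}\asymp\sqrt{n}$, giving $\lesssim (\log n)/\sqrt{n}$) and $k>n/2$ (where $k\asymp n$, giving $\lesssim 1/\sqrt{n}$), one obtains $\sum_{m<n}(n-m)^{-1}m^{-1/2}\lesssim (\log n)/\sqrt{n}$. Summing in $n$,
\[
\sum_{1\leq m<n\leq T}\frac{1}{\sqrt{mn}\,\log(n/m)}\;\lesssim\;\sum_{n\leq T}\sqrt{n}\cdot\frac{\log n}{\sqrt{n}}\;\lesssim\;T\log T.
\]
Combining Steps 1--3 yields $\int_{0}^{T}|\zeta(1/2+it)|^{2}\,dt\lesssim T\log T$.

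\textbf{Main obstacle.} The only subtle part is the off-diagonal estimate in Step 3: the weight $1/\log(n/m)$ can be large when $m$ and $n$ are close, so one must carefully exploit the cancellation arising from the $(n-m)$ denominator. All other steps are straightforward applications of Euler--Maclaurin and Plancherel-type arithmetic. A sharper treatment (e.g.\ via the approximate functional equation, choosing $x=y=\sqrt{t/2\pi}$) would give the asymptotic $\sim T\log T$, but for the upper bound stated in the lemma the elementary approach above suffices.
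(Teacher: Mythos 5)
Your argument follows the classical Hardy--Littlewood approach and matches what Titchmarsh proves in Theorem 7.2A, which is the reference the paper cites in lieu of a proof. The diagonal term in Step 2 and the off-diagonal estimate in Step 3 --- using $\log(n/m)\geq(n-m)/n$ and splitting the $k=n-m$ sum at $k=n/2$ --- are carried out correctly and together give $O(T\log T)$.

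The one genuine gap is in Step 1. The error term you write, $O(|s|\,x^{-\sigma-1})$, is not what Euler--Maclaurin or Abel summation actually produce. The exact identity is
\[
\zeta(s)=\sum_{n\leq x} n^{-s}+\frac{x^{1-s}}{s-1}+\{x\}x^{-s}-s\int_x^\infty \{u\}\,u^{-s-1}\,du~,
\]
and bounding the integral trivially by $\int_x^\infty u^{-\sigma-1}\,du$ yields an error of size $O\!\left(\frac{|s|}{\sigma}\,x^{-\sigma}\right)$, not $O(|s|\,x^{-\sigma-1})$. With $x=T$, $\sigma=\tfrac12$, $|t|\leq T$, this naive bound is $O(T^{1/2})$, and $\int_0^T (T^{1/2})^2\,dt = T^2$ is far too large to be absorbed. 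The bound $O(T^{-1/2})$ that you actually use is correct, but it does not follow from the formula you quote; it requires the sharper approximate formula (Titchmarsh, Theorem 4.11): for $\sigma\geq\sigma_0>0$ and $x\geq C|t|/2\pi$,
\[
\zeta(s)=\sum_{n\leq x}n^{-s}+\frac{x^{1-s}}{s-1}+O(x^{-\sigma})~,
\]
the extra factor of $|s|$ being removed by cancellation in $\int_x^\infty\{u\}u^{-s-1}\,du$ when the phase $t\log u$ has derivative bounded away from $2\pi\mathbb{Z}$ on $[x,\infty)$ (van der Corput / Kusmin--Landau). Your choice $x=T\geq t$ does satisfy this hypothesis, so the conclusion is true --- but the derivation as written is wrong and should be replaced by a citation of, or a proof of, this refined estimate. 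With that one repair the argument is complete.
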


The proof of Lemma \ref{l:zeta_bnd} can be seen as a consequence of Theorem 12.23 of \cite{Apostol}. For the proof of Lemma \ref{l:zeta_mean} see \cite{Titchmarsh}, Theorem 7.2A.

Applying \eqref{e:riem_bnd_sigma_large} with $\delta=\frac12$ we obtain
\begin{align*}
\abs{\int_{1-iT}^{-\frac12+iT}\dots+\int_{-\frac12-iT}^{1+iT}\dots}
&\lesssim T^{\frac12} \left[\int_{1-iT}^{-\frac12-iT}\abs{ X^ss^{-1}}~ds+\int_{\frac12-iT}^{1+iT}\abs{ X^ss^{-1}}~ds\right]
\\&\lesssim T^{-\frac12} X
\\&\lesssim X^{-1}.
\end{align*}

From \eqref{e:lindelhof} we trivially have
\[\int_{e^j}^{e^{j+1}}\abs{\zeta\left(\frac12+it\right)}^2~dt \lesssim (j+1)e^{j+1}\]~,
and hence applying H\"older's inequality we obtain
\begin{align*}
\int_{0}^{T}\abs{\frac{\zeta\left(\frac12+it\right)}{\frac{1}{2}+it}}~ds
&\leq \int_{0}^{1}\abs{\frac{\zeta\left(\frac12+it\right)}{\frac{1}{2}+it}}~ds+\sum_{j=0}^{\log T} \int_{e^j}^{e^{j+1}} \abs{\frac{\zeta\left(\frac12+it\right)}{\frac{1}{2}+it}}~ds\\
&\lesssim 
1+\sum_{j=0}^{\log T} \left[\int_{-\frac12+ie^j}^{-\frac12+ie^{j+1}} \abs{\zeta\left(\frac12+it\right)}^2~ds\right]^{\frac12}e^{-\frac{j}{2}}
\\&\lesssim 
1+\sum_{j=0}^{\log T} j^{\frac12}\\&\lesssim \left(\log T\right)^{\frac 32}~.
\end{align*}

Thus
\begin{align*}
\abs{\int_{-\frac12-iT}^{-\frac12+iT}\dots}\lesssim X^{-\frac 12}\left(\log X\right)^{\frac 32}~.
\end{align*}

Finally from the residue formula for second order poles we have
\begin{align*}
\res\left(\frac{\zeta(s+1)X^s}{\zeta(s+2)s},0\right)&=\lim_{s\rightarrow 0} \frac{d}{ds}\left[s^2 \frac{\zeta(s+1)X^s}{s\zeta(s+2)}\right] 
\\&=\log(X)\lim_{s\rightarrow 0} \left[\frac{s \zeta(s+1)}{\zeta(s+2)}\right]\\
&+\lim_{s\rightarrow 0} \frac{d}{ds}\left[\frac{s \zeta(s+1)}{\zeta(s+2)}\right]
\\&=\frac{\log(X)}{\zeta(2)}+\lim_{s\rightarrow 0}\frac{\zeta(s+1)+s\zeta'(s+1)}{\zeta(s+2)}-\lim_{s\rightarrow 0}\frac{s\zeta(s+1)\zeta'(s+2)}{\zeta(s+2)^2} \\
&=\frac{\log(X)+\gamma}{\zeta(2)}-\frac{\zeta'(2)}{\zeta(2)^2}~,
\end{align*}
where $\gamma$ is the Euler-Mascheroni constant.

Now consider the sum $A(X)$.
By Perron's formula we have for any half an odd integer $X$
\begin{align}\nonumber
A(X)&=\lim_{T\rightarrow \infty}\frac{1}{2\pi i}\int_{b-iT}^{b+iT}\Gamma(s)\frac{X^s}{s}~ds~,
\end{align}
for any $b>4$.

Then 
\begin{align*}
A(X)&=\lim_{U\rightarrow \infty} \frac{1}{2\pi i}\left[\int_{5-iU}^{5-iT}\dots+\int_{5-iT}^{\frac72-iT}\dots+\int_{\frac72-iT}^{\frac72+iT}\dots+\int_{\frac72+iT}^{5+iT}\dots+\int_{5+iT}^{5+iU}\dots\right]\\&\qquad+\res\left(\Gamma(s)\frac{X^s}{s},4\right)~.\end{align*}
Set $T=X^{12}$. Applying identical arguments as those used in the asymptotic formula of $M(X)$ we obtain an error
of size $X^{\frac72}\left(\log T\right)^{\frac 32}$ resulting from the integrals above.

The residue of the simple pole at $s=4$ is then
\begin{equation*}
\res\left(\Gamma(s)\frac{X^s}{s},4\right)=\lim_{s\rightarrow 4}(s-4)\frac{\zeta(s-3)X^s}{s\zeta(s-2)}=\frac{X^4}{4\zeta(2)}
\end{equation*}

Collecting the above computations, we obtain:
\begin{lemma}\label{l:SLC_Omega_0}
Suppose $d=4$ and $ \omega(c)=0$ then
\begin{equation}\label{eq:lnb}
\sum_{q=1}^{X}q^{-4}S(q,c)=M(X)=\frac{\log(X)+\gamma }{\zeta(2)}-\frac{\zeta'(2)}{\zeta(2)^2}+O(X^{-\frac 12}\left(\log X\right)^{\frac 32})~,
\end{equation}
and
\begin{equation}\label{eq:Aqc}
\sum_{q=1}^{X}S(q,c)=A(X)=\frac{X^4}{4\zeta(2)}+O(X^{\frac72}\left(\log X\right)^{\frac 32})~.
\end{equation}
\end{lemma}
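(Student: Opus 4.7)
The plan is a standard contour-shift argument applied to the Dirichlet series $\Gamma(s)=\sum_{q\ge 1}q^{-s}S(q,0)=\zeta(s-3)/\zeta(s-2)$ already derived above. Since $S(q,c)=S(q,0)$ whenever $\omega(c)=0$, both $M(X)$ and $A(X)$ are partial sums of coefficients of $\Gamma$ (respectively of $\Gamma(s+4)$), so I would begin by writing, for half-odd-integer $X$,
\[
M(X)=\frac{1}{2\pi i}\int_{(b)}\frac{\zeta(s+1)}{\zeta(s+2)}\frac{X^{s}}{s}\,ds\quad (b>0),\qquad
A(X)=\frac{1}{2\pi i}\int_{(b')}\frac{\zeta(s-3)}{\zeta(s-2)}\frac{X^{s}}{s}\,ds\quad (b'>4),
\]
via Perron's formula and then shift each contour leftward past the relevant pole, picking up the residue as the main term. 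For $M(X)$ the contour moves from $\operatorname{Re} s=1$ to $\operatorname{Re} s=-\tfrac12$, crossing a double pole at $s=0$ arising from the simple pole of $\zeta(s+1)$ at $s=0$ and the factor $1/s$. For $A(X)$ the contour moves from $\operatorname{Re} s=5$ to $\operatorname{Re} s=\tfrac72$, crossing a simple pole at $s=4$ coming from the pole of $\zeta(s-3)$.

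Next, I would truncate each vertical line at height $T$, taking $T=X^{4}$ in the $M(X)$ case and $T=X^{12}$ in the $A(X)$ case, so as to balance the Perron tail against the remaining contour pieces. Using the a priori bound $|S(q,0)|\lesssim q^{d/2+1}=q^{3}$ from Lemma~\ref{c:trivial_sq_est} together with the half-integer choice of $X$ (which keeps $|\log(X/q)|\gtrsim 1/X$), the standard Perron tail estimate gives an error of size $X^{-2}$ for $M(X)$ and a similar negligible contribution for $A(X)$. The horizontal segments from $b\pm iT$ to the shifted line at height $T$ are estimated by invoking Lemma~\ref{l:zeta_bnd} with $\delta=\tfrac12$ in the critical strip and the trivial bound away from it; together with $1/|s|\lesssim 1/T$ on these pieces, this produces bounds of the form $T^{-1/2}X$ in the $M$-case and $T^{-1/2}X^{5}$ in the $A$-case, both of which are absorbed into the claimed errors by our choice of $T$.

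The technical heart is the integral along the left vertical edge. For $M(X)$, on $\operatorname{Re} s=-\tfrac12$ the factor $1/\zeta(s+2)$ is bounded since $\zeta$ is nonvanishing and bounded below on $\operatorname{Re} s\ge \tfrac32$, while $\zeta(s+1)$ sits on the critical line $\operatorname{Re}=\tfrac12$. I would bound
\[
\int_{-1/2-iT}^{-1/2+iT}\frac{\zeta(s+1)}{\zeta(s+2)}\frac{X^{s}}{s}\,ds\;\lesssim\;X^{-1/2}\int_{-T}^{T}\frac{|\zeta(\tfrac12+it)|}{|{-}\tfrac12+it|}\,dt
\]
and apply Cauchy--Schwarz together with a dyadic decomposition $t\in[e^{j},e^{j+1}]$, invoking the mean-square bound of Lemma~\ref{l:zeta_mean} on each dyadic block. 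Summing the resulting $j^{1/2}$ contributions over $j\le\log T$ yields $(\log T)^{3/2}$ overall, producing the error $X^{-1/2}(\log X)^{3/2}$ in~\eqref{eq:lnb}. The analogous left-edge integral for $A(X)$ lies at $\operatorname{Re} s=\tfrac72$ where both $\zeta(s-3)$ and $1/\zeta(s-2)$ are bounded, giving an immediate pointwise bound of $X^{7/2}$ times a harmless factor.

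Finally, the residues are computed explicitly. For $A(X)$ the simple pole at $s=4$ gives
\[
\operatorname{Res}_{s=4}\frac{\zeta(s-3)X^{s}}{s\,\zeta(s-2)}=\frac{X^{4}}{4\,\zeta(2)},
\]
yielding~\eqref{eq:Aqc}. For $M(X)$ the double pole at $s=0$ is handled via the Laurent expansion $\zeta(s+1)=s^{-1}+\gamma+O(s)$: writing $F(s)=\zeta(s+1)/\zeta(s+2)\cdot X^{s}/s$ and computing $\lim_{s\to 0}\frac{d}{ds}(s^{2}F(s))$ gives the $\log(X)/\zeta(2)$ leading term together with the constants $\gamma/\zeta(2)-\zeta'(2)/\zeta(2)^{2}$, completing~\eqref{eq:lnb}. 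The main obstacle in the argument is the vertical edge estimate for $M(X)$, as this is the step where the logarithmic factor in the error is forced upon us and where the mean-square bound of Lemma~\ref{l:zeta_mean} is essential; everything else is careful bookkeeping of polynomial errors against the chosen truncation height.
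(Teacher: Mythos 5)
Your proposal matches the paper's argument step by step: same Dirichlet series $\Gamma(s)=\zeta(s-3)/\zeta(s-2)$, same Perron set-up, same contour shift (to $\operatorname{Re} s=-\tfrac12$ for $M$, to $\operatorname{Re} s=\tfrac72$ for $A$), same truncation heights $T=X^4$ and $T=X^{12}$, the same treatment of the Perron tail using the half-odd-integer trick and the trivial bound $|S(q,0)|\lesssim q^3$, the same use of Lemma~\ref{l:zeta_bnd} for the horizontal pieces, the same Cauchy--Schwarz/dyadic argument against Lemma~\ref{l:zeta_mean} for the critical-line integral, and the same residue calculations at the double pole $s=0$ and the simple pole $s=4$.

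One slip worth flagging: for the $A(X)$ left edge you assert that on $\operatorname{Re} s=\tfrac72$ ``both $\zeta(s-3)$ and $1/\zeta(s-2)$ are bounded,'' but $\operatorname{Re}(s-3)=\tfrac12$, so $\zeta(s-3)$ sits on the critical line and is not pointwise bounded (that would be Lindel\"of-strength information). Only $1/\zeta(s-2)$ is genuinely bounded there, since $\operatorname{Re}(s-2)=\tfrac32>1$. The correct way to close this step is to reuse the same Cauchy--Schwarz plus mean-square argument you already ran for $M(X)$, which produces the extra $(\log T)^{3/2}$ factor and explains the stated error $O\bigl(X^{7/2}(\log X)^{3/2}\bigr)$ rather than a cleaner $O(X^{7/2})$. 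The paper does exactly this, invoking ``identical arguments as those used in the asymptotic formula of $M(X)$.'' With that one correction, your proposal is the paper's proof.
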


\vskip 3mm
\underline{Case (ii) $ \omega(c)\ne 0$.} To obtain a bound on \eqref{e:sqc_small} we start by considering the function
\begin{equation*}
\Gamma(s,c):= \sum_{q=1}^{\infty} q^{-s}S(q,c)= \prod_p \sum_{j=0}^{\infty}p^{-js}S(p^j,c)~.
\end{equation*}
From Lemma \ref{l:sp_t},  if $p\nmid  \omega(c)$ we have from \eqref{e:spt_eval-1} and \eqref{e:spt_eval-2}
\begin{equation*}
\sum_{j=0}^{\infty}p^{-js}S(p^j,c)=1-p^{2-s}~.
\end{equation*}
and hence
\[
\prod_{p\nmid  \omega(c)} \frac{1}{1-p^{2-s}}\sum_{j=0}^{\infty}p^{-js}S(p^j,c)=1~.\]
If $p\mid  \omega(c)$ and $s> 3$ we have from Lemma \ref{l:sp_t}
\begin{equation*}
\sum_{j=0}^{\infty}p^{-js}S(p^j,c)=1+O(p^{3-s}).
\end{equation*}
Since $ \omega(c)$ has at most $O\left(\frac{\log\abs{c}}{\log\log\abs{c}}\right)$ distinct prime divisors\footnote{This is a consequence of the Prime Number Theorem, see \cite{HW}, Section 22.10.}  we have for $s>3$
\[
\prod_{p\mid  \omega(c)} \frac{1}{1-p^{2-s}}\sum_{j=0}^{\infty}p^{-js}S(p^j,c)\leq 
\prod_{p\mid  \omega(c)} \sum_{j=0}^{\infty}p^{-js}S(p^j,c)\leq e^{O\left(\frac{\log\abs{c}}{\log\log\abs{c}}\right)}
\lesssim c^{\epsilon}~.
\]
Thus by Euler's product formula we obtain
\begin{equation}\label{eq:spc}
\Gamma(s,c):=\prod_p \sum_{j=0}^{\infty}p^{-js}S(p^j,c)=\nu(c,s)\zeta(s-2)
\end{equation}
for some  function $\nu(c,s)$, analytic of  order  $O(c^{\epsilon})$ for $\Re s>3$. In particular we obtain
\[\sum_{q=1}^{X}\frac{S(q,c)}{q^{s}}\lesssim c^{\epsilon},\]
for any $s>3$. Using Abel's summation formula \eqref{e:Abel_summation} in conjunction with \eqref{eq:spc} we obtain that for $s>3$
\begin{align*}
\sum_{q=1}^{X}S(q,c)&= X^{s}\sum_{q=1}^{X}\frac{S(q,c)}{q^{s}}-s\int_1^{X} u^{s-1}\sum_{q=1}^{u}\frac{S(q,c)}{q^{s}}~du\\
& \lesssim c^{\epsilon}X^{s}~.
\end{align*}

Hence we obtain the following lemma:
\begin{lemma}\label{l:SLC_Omega_not_0}
Suppose  $d=4$ and $ \omega(c)\neq 0$ then for any $\epsilon>0$ we have
\begin{equation}
\sum_{q=1}^{X}S(q,c)\lesssim  c^{\epsilon}X^{3+\epsilon}~.
\end{equation}
\end{lemma}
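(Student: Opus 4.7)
The strategy is to introduce the Dirichlet series $\Gamma(s,c) = \sum_{q \geq 1} q^{-s} S(q,c)$, extract from it an explicit $\zeta(s-2)$ factor via the Euler product, and then recover the partial-sum bound through Abel summation.

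By the multiplicativity of $S(q,c)$ in $q$ (Lemma~\ref{l:mult_sqc}), I would write
\[
\Gamma(s, c) = \prod_p \sum_{j \geq 0} p^{-js} S(p^j, c)
\]
and analyze each local factor via Lemma~\ref{l:sp_t}, specializing to $d = 4$. For any prime $p \nmid \omega(c)$, only the terms $j = 0, 1$ survive, with $S(p, c) = -p^{2}$, so the local factor equals $1 - p^{2-s}$, which is precisely the reciprocal of the local factor of $\zeta(s-2)$. Pulling these primes out of the product yields a factorization
\[
\Gamma(s, c) = \zeta(s-2) \, \nu(c, s), \qquad \nu(c, s) = \prod_{p \mid \omega(c)} (1 - p^{2-s}) \sum_{j \geq 0} p^{-js} S(p^j, c),
\]
and for $\Re s > 3$ each local factor at a bad prime is $1 + O(p^{3-s}) = O(1)$.

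The core arithmetic step is bounding $\nu(c, s)$ polynomially in $|c|$. Here I would invoke the classical estimate (a consequence of the Prime Number Theorem) that any nonzero integer $n$ has at most $O(\log|n|/\log\log|n|)$ distinct prime factors. Applied to the integer $\omega(c)$, this yields $\nu(c,s) \lesssim \exp(O(\log|c|/\log\log|c|)) \lesssim |c|^{\epsilon}$ for any $\epsilon > 0$. Combined with the boundedness of $\zeta(s-2)$ on the vertical line $\Re s = 3 + \epsilon$, this gives uniform control of the truncated Dirichlet series:
\[
\sum_{q=1}^X \frac{S(q,c)}{q^s} \lesssim |c|^{\epsilon} \quad \text{for any } s > 3.
\]

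To pass from this to $\sum_{q \leq X} S(q,c)$, I would apply Abel's summation formula~\eqref{e:Abel_summation} with $a_q = S(q,c)/q^{s}$ and $\phi(q) = q^{s}$, yielding
\[
\sum_{q=1}^X S(q,c) = X^s \sum_{q=1}^X \frac{S(q,c)}{q^s} - s \int_1^X u^{s-1} \sum_{q \leq u} \frac{S(q,c)}{q^s} \, du \lesssim |c|^{\epsilon} X^s,
\]
and setting $s = 3 + \epsilon$ gives the claimed bound. The principal technical hurdle is the estimate $\nu(c,s) = O(|c|^\epsilon)$: a naive product over bad primes could grow exponentially in the number of prime divisors of $\omega(c)$, and it is precisely the tight divisor-count estimate above that prevents losing more than an $\epsilon$-power of $c$ in the final answer.
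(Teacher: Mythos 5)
Your proposal is correct and follows essentially the same path as the paper's own argument: introduce the Dirichlet series $\Gamma(s,c)$, exploit multiplicativity to form an Euler product, identify the $\zeta(s-2)$ factor from the local factors at primes not dividing $\omega(c)$, bound the remaining Euler factor $\nu(c,s)$ via the $O(\log|c|/\log\log|c|)$ estimate on the number of distinct prime divisors of $\omega(c)$, and then convert the bound on $\sum_{q\le X} S(q,c)/q^s$ into a bound on $\sum_{q\le X} S(q,c)$ via Abel summation with $\phi(q)=q^s$ and $s=3+\epsilon$. The decomposition, key lemmas (multiplicativity, the explicit prime-power evaluation, the divisor-count bound, Abel's formula), and the final optimization over $s$ all match the paper.
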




\section{Sharp upper bounds on  lattice sums}\label{sec:res-bound}

The main purpose of this section is to prove the following,
\begin{theorem}\label{th:disbnd}
Let $K\in \mathbb{Z}^n_L$, and 
\[
\mathcal R_\mu(K) = \{K_i \in \mathbb{Z}_L^n; \;  \mathcal{S}_3(K) =  K_1 - K_2 + K_3 - K = 0, \;  \Omega_3(K) = K_1^2 - K_2^2 + K_3^2 - K^2 =\mu\}.
\]  
Given sequences $\{a_K\}$, $\{b_K\}$, and $\{c_K\}$,  such that 
\[
\left |a_K\right | + \left |b_K\right | + \left |c_K\right | \lesssim \langle K \rangle^{-\ell}
\]
we have for $\ell > 3n+2$
\begin{equation}
\sup_{K,\mu}\;  \langle K \rangle^{\ell}\sum_{\mathcal R_\mu(K)} a_{K_1}b_{K_2}c_{K_3} \lesssim  
\begin{cases} L^{2n-2}  \quad \text{if} \quad n>2~,\\											
 L^2 \log L  \quad \text{if} \quad n=2~.
\end{cases}
\label{eq:rsnntbnd5}
\end{equation}
\end{theorem}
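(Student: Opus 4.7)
The plan is to reduce the sum in \eqref{eq:rsnntbnd5} to a framework amenable to the circle method (Theorem \ref{th:cm}) and then to use the arithmetic estimates from Section~3. First I would change variables $z_1 = K_1 - K$, $z_2 = K_3 - K$, so that the momentum constraint $\mathcal{S}_3(K)=0$ forces $K_2 = K + z_1 + z_2$ and the energy constraint $\Omega_3(K) = \mu$ becomes $-2\, z_1\cdot z_2 = \mu$. Setting $(w_1,w_2) = L(z_1,z_2) \in \Z^{2n}$ and $Q(w_1,w_2) = -2\, w_1 \cdot w_2$, the sum takes the form
\[
\sum_{\mathcal R_\mu(K)} a_{K_1} b_{K_2} c_{K_3} \;=\; \sum_{\substack{(w_1,w_2)\in \Z^{2n} \\ Q(w_1,w_2) = \mu L^2}} W\!\left(\tfrac{w_1}{L},\tfrac{w_2}{L}\right),
\]
where $Q$ is a non-degenerate integer quadratic form of dimension $d = 2n$, and $W(x_1,x_2) = \widetilde a(K + x_1)\,\widetilde b(K + x_1 + x_2)\,\widetilde c(K + x_2)$ for smooth extensions of $a, b, c$ to $\R^n$ satisfying the same $\langle\cdot\rangle^{-\ell}$ pointwise bound. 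Because $K$ is the alternating sum of the three arguments of $W$, at least one has modulus $\gtrsim \langle K\rangle$, producing the key pointwise bound $|W(x_1,x_2)| \lesssim \langle K\rangle^{-\ell}\, \omega(x_1,x_2)$ for some integrable, rapidly decaying $\omega$ on $\R^{2n}$ (this is where $\ell > 2n$ is used), and analogous bounds hold for derivatives.

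Next I would apply Theorem \ref{th:cm} with dimension $2n$, giving
\[
\sum_{\mathcal R_\mu(K)} a_{K_1} b_{K_2} c_{K_3} \;=\; \frac{L^{2n-2}}{C_L}\sum_{q=1}^\infty \sum_{c\in\Z^{2n}} \frac{S_{\mu L^2}(q,c)}{q^{2n}}\, I_\mu\!\left(\tfrac{q}{L},c\right),
\]
use the bound $|S_{\mu L^2}(q,c)| \lesssim q^{n+1}$ of Lemma \ref{c:trivial_sq_est} (uniformly in $\mu$ and $c$), and use the Fourier representation of $I_\mu$ together with Corollary \ref{cor:Fh} and repeated integration by parts in $x$ in the definition \eqref{ICR} to derive
\[
|I_\mu(r,c)| \;\lesssim\; \langle K\rangle^{-\ell}\, \langle c/r\rangle^{-N}
\]
for arbitrary $N$, with implicit constants independent of $\mu$ and $L$. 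The decay in $c$ comes from integrating by parts against the oscillation $e(-c\cdot x/r)$, which consumes derivatives of $W_\chi$; this is where the regularity margin $\ell > 3n+2$ is required, so that sufficiently many derivatives of $W$ remain in $L^1$ with a $\langle K\rangle^{-\ell}$ bound.

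Finally I would split the sum into the principal term $c = 0$ and a remainder $c \neq 0$. The remainder is majorized by $\langle K\rangle^{-\ell}\, L^{2n-2}\sum_{q\lesssim L} q^{-(n-1)} \sum_{c\neq 0}\langle c/r\rangle^{-N}$; since Lemma \ref{hummingbird} shows that $h_\chi(r,\cdot)$ vanishes for $r \geq 1$, the effective range is $q \leq L$, and the $c$-sum converges rapidly once $N > 2n$, so this is $O(\langle K\rangle^{-\ell}L^{2n-2})$. For the principal term, the sum $\sum_{q\leq L} q^{-2n}|S_{\mu L^2}(q,0)|$ is dominated by $\sum_{q \leq L} q^{-(n-1)}$, which is bounded when $n \geq 3$ and of size $\log L$ when $n = 2$; multiplying by the prefactor $L^{2n-2}$ reproduces the claimed bounds of $L^{2n-2}$ and $L^2 \log L$. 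The main obstacle is the second step above: propagating the $\langle K\rangle^{-\ell}$ decay uniformly through the Fourier integral $I_\mu(r,c)$, and in particular quantifying exactly how many derivatives of $W_\chi$ are spent on integration by parts — this is what determines the regularity threshold $\ell > 3n+2$ in the statement.
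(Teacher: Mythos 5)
Your reduction to the circle method, the change of variables $z_i = K_i - K$, the decomposition of $W$ according to which $K_i$ dominates, the use of Lemma~\ref{c:trivial_sq_est} for $S_{\mu L^2}(q,c)$, and the final $\sum_{q\le L} q^{-(n-1)}$ computation all match the paper. However, the key analytic claim
\[
|I_\mu(r,c)| \lesssim \langle K\rangle^{-\ell}\,\langle c/r\rangle^{-N}
\]
is not achievable by integrating by parts directly in~\eqref{ICR} against $e(-c\cdot x/r)$, and this is precisely the obstacle the paper spends Proposition~\ref{dispersive lemma} overcoming. The trouble is that the derivatives do not fall only on $W_\chi$: they also hit $h_\chi(r, Q_\mu(x))$, and the chain rule produces factors $\nabla Q_\mu(x)\sim (x_2,x_1)$. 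On the support of $W_i$ where $|K_i|\gtrsim|K|$, the variables $x_1=K_1-K$ and/or $x_2=K_3-K$ are of size $\sim|K|$, so each integration by parts costs an uncontrolled factor of $|K|$ and the resulting bound is useless uniformly in $K$. In fact the phase $s\,\omega(z)$ (after passing through the Fourier representation of $h_\chi$) contains a linear-in-$K$ piece, and there genuinely is a stationary region $c\approx 2Krs$ where the integral does not decay in $c$; the true bound on $v_j(s,-c/r)$ involves $\langle c_i - 2Krs\rangle^{-(n+1)}$ rather than $\langle c/r\rangle^{-N}$.

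The paper's resolution is to write $I_\mu(r,c) = \int \widehat h_\chi(r,s)\, e(-s\mu)\, v_j(s,-c/r)\,ds$, view $v_j$ as a solution of the Schr\"odinger-type equation~\eqref{eq:hypsch2}, and then, before integrating by parts in $z$, split $\omega(z)=\omega_j(z) + 2K\cdot(\text{linear})$ so that the piece $\omega_j$ only involves the bounded frequencies (absorbed by the decaying factors of $W_j$), while the $2K\cdot(\cdot)$ piece is merged with $c\cdot z/r$ to produce the shifted variables $c_i-2Krs$. With these shifted-$c$ weights one sums over $c$ first at fixed $(r,s)$ to get a bound $O(1)$, and only then integrates in $s$ using $\langle s\rangle^{-n}$ and $\widehat h_\chi(r,s)\lesssim\langle rs\rangle^{-M}$; this is how the paper reaches $\sum_c |I_\mu(r,c)|\lesssim\langle K\rangle^{-\ell}$ uniformly in $K,\mu$. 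You correctly flagged this step as the main obstacle, but the route you propose (direct integration by parts in $x$, then summing the $c\neq 0$ tail separately) does not get past it; the shift in $c$ and the $s$-integration are structurally required, and the threshold $\ell>3n+2$ comes from the derivative count in Proposition~\ref{dispersive lemma}, not from the $L^1$ bound on $W$ that you cite.
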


Note that although it suffices to show that \eqref{eq:rsnntbnd5} holds with $a_K, b_K, c_K$ replaced by $\langle K \rangle^{-\ell}$,
\begin{equation}\label{eq:rsnntbnd7}
\sup_{K,\mu}\;  \langle K \rangle^{\ell}\sum_{\mathcal R_\mu(K)}\langle K_1 \rangle^{-\ell}\langle K_2 \rangle^{-\ell}\langle K_3 \rangle^{-\ell} \lesssim \begin{cases} L^{2n-2}  \quad \text{if} \quad n>2~,\\		 L^2 \log L  \quad \text{if} \quad n=2~.
\end{cases}
\end{equation}
We will prove the above bound where each $\langle K_i \rangle^{-\ell}$ is replaced by $f_i(K_i)$, with $f_i\in X^{\ell,N}$. Indeed, some of the intermediate steps with these more general functions will turn out to be useful in the next section.

As explained in the previous section, we switch variables to
$$
\left\{
\begin{array}{l}
z_1 = K_1 - K \\
z_2 = K_3 - K
\end{array}
\right.
$$
so that
$$
\Omega_3(K,K_1,K_2,K_3) = -2 z_1 \cdot z_2 = \omega(z)~.
$$

Inequality \eqref{eq:rsnntbnd7} will be proved by applying  Theorem \ref{th:cm} to the weight
\begin{equation}
\label{eq:wz1}
W(z) =  f_1(K+z_1)f_2(K + z_1 + z_2)f_3( K + z_2)~,
\end{equation}
which gives
\begin{equation}
\label{eq:rsnntbnd8}
\sum_{\mathcal R_\mu(K)} f_1(K_1)f_2(K_2) f_3(K_3) =\sum_{\substack{z\in \mathbb{Z}_L^{2n}\\ \omega_\mu(z) =0}} W(z)=
\frac{L^{2n-2}}{C_L}\sum_{q=1}^L \sum_c S_{\mu L^2}(q,c) \frac 1{q^d} I_\mu(r, c)~,
\end{equation}
where $\omega_{\mu}(z)=\omega(z)-\mu$. Notice that  $W$ (and hence $I_\mu(r,c)$) implicitly depends on $K$.  Here, we restricted the sum in $q$ to $q\leq L$ using the knowledge that $I_\mu(r, c)$ is supported on $r=\frac{q}{L}\in (0,1)$.

The decay of the sum in $K$ is due to the restriction
 $K_1 - K_2 + K_3 = K$, which implies  that $|K_i| \geq {|K|}/{3}$
for at least one $i$. Consequently one of the $f_i$ in the sum will contribute $\langle K \rangle^{-\ell}$ 
and the remaining two $f_i$ will be used in  bounding the sum. Therefore we introduce cutoff functions supported on $|K_i| \gtrsim |K|$, for $i=1,2,\mbox{ or } 3$.

   Let $\varphi \in C_0^\infty(-2,2)$ with $\varphi(x) = 1$ for $|x| < 1$. With
a slight abuse of notation, we write
\begin{align*}
\chi_{\{|x| < a\}} &= \varphi\bigg(\frac{|x|}{a}\bigg)\\
\chi_{\{|x| > a\}} &= 1 -\varphi\bigg(\frac{|x|}{a}\bigg)~,
\end{align*}
and introduce the following cutoff functions:
\begin{align*}
\chi_0 &= \chi_{\{|K| < 1\}}\\
\chi_1 &= \chi_{\{|K| > 1\}}\chi_{\{|K_1| > \frac{K}{8}\}}\\
\chi_2 &= \chi_{\{|K| > 1\}}\chi_{\{|K_1| <  \frac{K}{8}\}}\chi_{\{|K_2| > \frac{K}{8}\}}\\
\chi_3 &= \chi_{\{|K| > 1\}}\chi_{\{|K_1| <
  \frac{K}{8}\}}\chi_{\{|K_2| < \frac{K}{8}\}} \chi_{\{|K_3| >
  \frac{K}{8}\}}.
\end{align*}
Since $\chi_{\{|K_1| < \frac{|K|}{8}\}}\chi_{\{|K_2| < \frac{K}{8}\}}\chi_{\{|K_3| <  \frac{K}{8}\}} = 0$ 
we have 
\[
\chi_0 + \chi_1 + \chi_2 + \chi_3 = 1.
\]
Writing  $W_i = W\chi_i$,
\begin{equation}
\label{eq:wi}
\begin{split}
&|W_1(z)| \lesssim   \langle K \rangle^{-\ell} f_2(K + z_1 + z_2)f_3( K + z_2), \\[.3em]
&|W_2(z)| \lesssim   \langle K \rangle^{-\ell} f_1(K + z_1)f_3( K + z_2), \\[.3em]
&|W_3(z)| \lesssim   \langle K \rangle^{-\ell}f_1( K + z_1) f_2(K + z_1 + z_2).
\end{split}
\end{equation}
Then $I_\mu(r,c)$ can be written as 
\begin{equation}
\begin{split}
\label{penguin}
I_\mu(r,c) &= \int_{\mathbb R^{2n}} W(z)  \int_{-\infty}^{\infty} \widehat h_\chi\left(r,s\right) e(s \omega_{\mu}) e\left(-\frac{c\cdot z}{r}\right)\, dsdz,\\
&= \int_{-\infty}^{\infty}  \widehat h_\chi\left(r,s\right) e\left(-s\mu\right)\int_{\mathbb R^{2n}} W(z)  e(s \omega ) e\left(-\frac{c\cdot z}{r}\right)\, dzds,\\&=
\int_{-\infty}^{\infty} \widehat{h}_\chi(r,s) e\left(-s\mu\right) v\left(s, -\frac cr\right) \, ds\\
\end{split}
\end{equation}
where $v$ solves the following Schr\"odinger-type equation
\begin{equation}
\label{eq:hypsch2}
\begin{split}
&\partial_s \widehat{v}(s,\xi) = 2\pi i \omega(\xi)  \widehat{v}(s,\xi) \\
& \widehat{v}(0,\xi) = W(\xi).
\end{split}
\end{equation}

Writing $I_\mu(r,c) =\sum_{j=0}^3 I_{j,\mu}(r,c)$ we have
\begin{equation*}
\begin{split}
 I_{j,\mu}(r,c) &= \int_{\mathbb R^{2n}} W_j(z)  h_\chi\left(r,\omega_{\mu}(z)\right)  e\left(-\frac{c\cdot z}{r}\right)\, dz,\\ &= \int_{-\infty}^{\infty} \widehat{h}_\chi(r,s) e\left(-s\mu\right) v_j\left(s, -\frac cr\right) \, ds,
\end{split}
\end{equation*}
where $v_j = v \chi_j$.

The $v_j$, as solutions of  this dispersive PDE, satisfy the following elementary bound.

\medskip 

\begin{lemma}\label{lem:disp}
If $u$ solves the equation 
$$
\partial_s \widehat u(s, z)= 2\pi i \omega(z)  \widehat u(s,z),
$$
then
\begin{equation}\label{eq:elemest}
\begin{split}
\|u(s)\|_{L^\infty(\RR^{2n})}\lesssim& \|\widehat u(0)\|_{L^1(\RR^{2n})}\\
\|u(s)\|_{L^\infty(\RR^{2n})} \lesssim& \frac{1}{s^{n}}\|(1-\Delta)^m \widehat u(0)\|_{L^2(\RR^{2n})}
\end{split}
\end{equation}
for any integer $m>\frac{n}{2}$.   Consequently, for $v$ defined by ~\eqref{eq:hypsch2}, with $f \in X^{\ell,N}$ with $\ell > \frac{n}{2}$ and $N>n$,
\begin{equation}\label{PrecisedvEst-0}
\abs{v\left(s, -\frac{c}{r}\right)} \le \sum_{j=0}^3 \abs{v_j\left(s, -\frac{c}{r}\right)}\lesssim \langle s \rangle^{-n}\langle K \rangle^{-\ell}.
\end{equation}
\end{lemma}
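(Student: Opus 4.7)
The plan is to solve the equation in Fourier space and then pass to physical space via two different routes, giving the two stated bounds, before plugging in the explicit form of the initial data $W$ for the final claim.

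First, the ODE $\partial_s \widehat u(s,z)=2\pi i \omega(z)\widehat u(s,z)$ is solved explicitly by $\widehat u(s,z) = e^{2\pi i s\omega(z)}\widehat u(0,z)$. For the first bound of \eqref{eq:elemest}, I would just write $u(s,x)=\int \widehat u(s,z) e(x\cdot z)\,dz$ and take the absolute value inside, using $|e^{2\pi i s\omega(z)}|=1$ to get $\|u(s)\|_{L^\infty}\le \|\widehat u(0)\|_{L^1}$.

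For the second (dispersive) bound, the key is to exploit the non-degeneracy of the hyperbolic quadratic form $\omega(z_1,z_2)=-2 z_1\cdot z_2$, whose Hessian has determinant $(\pm 2)^{2n}\neq 0$. I would compute the convolution kernel $K_s$ with $\widehat K_s(z)=e^{2\pi i s\omega(z)}$ by a direct application of Fourier inversion: doing the $z_2$-integral first produces a delta function forcing $z_1=(x_2-y_2)/(2s)$, and one finds
\begin{equation*}
K_s(y_1,y_2)=\frac{1}{(2s)^n}\, e\!\left(\frac{y_1\cdot y_2}{2s}\right),
\qquad u(s,x) = \bigl(K_s \ast u_0\bigr)(x).
\end{equation*}
Hence $|u(s,x)|\le (2s)^{-n}\|u_0\|_{L^1}$. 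I would then bound the $L^1$ norm by Cauchy--Schwarz, $\|u_0\|_{L^1}\le \|\langle y\rangle^{-2m}\|_{L^2}\,\|\langle y\rangle^{2m}u_0\|_{L^2}$, where the first factor is finite precisely for $m>n/2$ (since we are on $\RR^{2n}$, the integrand is $\langle y\rangle^{-4m}$), and Plancherel identifies the second factor with $\|(1-\Delta)^m\widehat u(0)\|_{L^2}$ up to harmless constants. Combining these gives the second inequality.

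For the consequence on $v_j$, I would apply the two estimates just proved with initial data $W_j=W\chi_j$. The bounds \eqref{eq:wi} show that each $W_j$ has a prefactor $\langle K\rangle^{-\ell}$ times a tensor product of two shifted $f_a$'s evaluated at arguments depending on $z_1$ or $z_2$ only. Since $f_a\in X^{\ell,N}$ with $\ell>n/2$ guarantees $\|f_a\|_{L^1(\RR^n)}<\infty$ and analogously $\|\langle y\rangle^{2m}f_a\|_{L^2(\RR^n)}<\infty$ for $m\le N/2$ and $N>n$, the tensor product structure gives $\|W_j\|_{L^1(\RR^{2n})}+\|(1-\Delta)^m W_j\|_{L^2(\RR^{2n})}\lesssim \langle K\rangle^{-\ell}$. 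Taking the minimum of the two bounds yields $|v_j(s,-c/r)|\lesssim \langle s\rangle^{-n}\langle K\rangle^{-\ell}$, and summing over $j=0,1,2,3$ finishes the proof.

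The only mildly delicate point is bookkeeping in the last step: one must verify that the cutoffs $\chi_j$ (which have bounded derivatives and are independent of $s$ and $r$) do not spoil the Sobolev bound $\|(1-\Delta)^m W_j\|_{L^2}\lesssim \langle K\rangle^{-\ell}$ and that the $X^{\ell,N}$ regularity on $f$ supplies enough derivatives; this is why the hypothesis $N>n$ enters through the need for $m>n/2$ (with $2m\le N$) in the $L^2$ side of the argument.
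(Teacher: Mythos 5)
Your proposal is correct and follows essentially the same route as the paper: explicit Fourier-space solution, the $L^1\to L^\infty$ dispersive estimate for the non-degenerate hyperbolic phase (which you derive by computing the kernel explicitly, whereas the paper simply invokes it as known), Cauchy--Schwarz and Plancherel to pass to $\|(1-\Delta)^m\widehat u(0)\|_{L^2}$, and then the bounds \eqref{eq:wi} for the consequence on $v_j$. One cosmetic slip in your final paragraph: the quantity to be bounded on the $L^2$ side is $\|(1-\Delta)^m W_j\|_{L^2(\RR^{2n})}$, which requires control of up to $2m$ derivatives of the $f_a$'s rather than a weight $\|\langle y\rangle^{2m}f_a\|_{L^2(\RR^n)}$; you do appeal to the correct condition $2m\le N$ in your closing sentence, so the substance of the argument is intact.
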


\begin{proof}
The first inequality is trivial and follows from the the fact $\widehat u(s)=e(\omega s) \widehat u(0)$. For the second inequality, we use the dispersive estimate
\begin{align*}
\|u(s)\|_{L^\infty} \lesssim s^{-n} \|u(0)\|_{L^1}\lesssim s^{-n} \|\langle x \rangle^{2m} v\|_{L^2}\sim s^{-n}\|(1-\Delta)^m \widehat v\|_{L^2}.
\end{align*}
Using inequalities \eqref{eq:wi} finishes the proof.
\end{proof}

The above lemma gives bounds on $v_j$  that are independent of $r$ and $c$, and therefore  
\[
\left |  I_\mu(r,c) \right | \lesssim1.
\]
To obtain bounds that decay for large $c$ or for small $r$, we need to integrate by parts in $z$ in the expression
\[
v_j\left(s,-\frac cr\right) = \int_{\mathbb R^{2n}} W_j(z) e{\left(s\omega(z) -\frac{c\cdot z}{r}\right)}\, dz.
\]
However, to avoid introducing  powers of $K$ from derivatives of $\omega(z)$ (since $W$ is a function  of $K+z_i$), one should exercise care when doing so.  
\medskip 

\begin{proposition}\label{dispersive lemma}
Let $W$ be  given by \eqref{eq:wz1} with $f_j\in X^{\ell, N}$ for $\ell>3n+2$ and $N> 3n+2$ with $\|f_j\|_{\ell, N} = 1$ for $j=1, 2, 3$, then
\begin{equation}
\abs{v\left(s, -\frac{c}{r}\right)}\lesssim  \;\langle K \rangle^{-\ell} \frac{ \langle rs \rangle^{2n+2} }{\langle s \rangle^{n}} (A_0 + A_1 +A_2 +A_3)\label{asfoor2}
\end{equation}
 where
 \begin{align*}
&A _0= \langle c_1\rangle^{-(n+1)}\langle c_2\rangle^{-(n+1)}\\
&A_1 =  \langle c_1 -2Krs\rangle^{-(n+1)}\langle c_2\rangle^{-(n+1)}\\
&A_2 =  \langle c_1 -2Krs\rangle^{-(n+1)}\langle c_2-2Krs\rangle^{-(n+1)}  \\
&A_4 = \langle c_1\rangle^{-(n+1)}\langle c_2-2Krs\rangle^{-(n+1)}.
\end{align*}
Furthermore, if $c$ is distinct from $(0,0)$, $([2Krs],0)$, $(0,[2Krs])$, $([2Krs],[2Krs])$
$$
\abs{v\left(s, -\frac{c}{r}\right)}\lesssim  \;\langle K \rangle^{-\ell} r^{n+1} \langle s \rangle \langle rs \rangle^{n+1} (A_0 + A_1 +A_2 +A_3).
$$
\end{proposition}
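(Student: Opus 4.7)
The approach is to decompose $v=\sum_{j=0}^{3}v_j$ via the partition of unity $\chi_0+\chi_1+\chi_2+\chi_3=1$, so that on the support of each $\chi_j$ with $j\geq 1$ one of the factors $f_i(K+\cdot)$ has an argument of size at least $|K|/8$, pointwise contributing the desired prefactor $\langle K\rangle^{-\ell}$ via the $X^\ell$-decay. On $\chi_0$, where $|K|<1$, the prefactor is trivial. It then suffices to bound each $v_j$ separately.

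For $j\geq 1$, I would perform the affine change of variables $\eta_i=K+\xi_i$ on whichever coordinates correspond to the extracted factor (and, where needed, on the remaining variable to "center" the weight). A direct computation shows that the bilinear part $-2s\xi_1\xi_2$ of the phase $\Phi$ is preserved in form, while the linear part $-c\cdot\xi/r$ is translated: a term $-c_i\xi_i/r$ becomes $-(c_i-2Krs)\cdot\eta_i/r$ plus an irrelevant constant phase. This shift is precisely the source of the frequencies $c_i-2Krs$ appearing in the factors $A_1,A_2,A_3$, and the region $\chi_j$ determines which coordinates are translated, hence which $A_j$ arises.

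The core estimate combines two tools applied to each $v_j$. First, Lemma~\ref{lem:disp} provides the dispersive $\langle s\rangle^{-n}$ factor via an $L^2$ bound on $(1-\Delta_\xi)^m W$. Second, I plan to integrate by parts $(n+1)$ times in each of the two $\eta$-variables: each IBP in $\eta_i$ brings down $r/\nabla_{\eta_i}\Phi = r/(c_i^{\mathrm{shift}}+2sr\,\eta_{3-i})$, and the elementary bound $\langle c_i^{\mathrm{shift}}+2sr\eta_{3-i}\rangle^{-1}\lesssim\langle c_i^{\mathrm{shift}}\rangle^{-1}\langle 2sr\eta_{3-i}\rangle$ extracts the required $\langle c_i^{\mathrm{shift}}\rangle^{-(n+1)}$ decay at the cost of a factor $\langle rs\rangle^{n+1}$ that is absorbed by the $X^{\ell,N}$-decay of the remaining weight (whence the hypothesis $\ell,N>3n+2$). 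Matching the $r$- and $s$-powers generated by IBP against the $\langle s\rangle^{-n}$ from Lemma~\ref{lem:disp} then yields the overall prefactor $\langle rs\rangle^{2n+2}/\langle s\rangle^n$. The refined second bound follows by using only IBP (without the dispersive estimate), which is available once $c$ stays away from the critical lattice points $(0,0)$, $([2Krs],0)$, $(0,[2Krs])$, $([2Krs],[2Krs])$, since precisely at these points $\nabla_{\eta_i}\Phi$ could vanish on the support of the weight.

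The main obstacle is the "mixed" factor $f_2(K+\xi_1+\xi_2)$: under the substitution $\eta_i=K+\xi_i$ it becomes $f_2(\eta_1+\eta_2-K)$ and retains $K$-dependence in the $\chi_1$ and $\chi_3$ regions, where only one of the two $\eta_i$ is a "true" compactly-localizing variable. One must carefully exploit the support of $\chi_j$ together with a uniform $L^\infty$ bound on $f_2$ to control this term without destroying the integrability required by IBP. A secondary technicality is the clean combination of the $L^2$-based dispersive bound with the IBP: the $2\pi i s\nabla_\xi\omega$ factors brought down by IBP must be integrated against $(1-\Delta_\xi)^m W$ in $L^2$, an accounting that is precisely made possible by the high-regularity hypothesis $N>3n+2$.
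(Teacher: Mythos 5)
Your outline (the same partition $\chi_0+\chi_1+\chi_2+\chi_3=1$, decay in $K$ from the extracted factor, integration by parts for decay in $c$, and Lemma~\ref{lem:disp} for the $\langle s\rangle^{-n}$ factor) coincides with the paper's, but two essential mechanisms are set up incorrectly, and either one on its own would cause the argument to fail.

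First, the choice of which coordinate to translate is backwards. For the region $\chi_1$ (where $|K_1|\gtrsim|K|$ and $f_1$ is the extracted factor) you propose substituting $\eta_1=K+z_1=K_1$. A direct computation of the phase
\[
s\omega(z)-\frac{c\cdot z}{r}=-2sz_1\cdot z_2-\frac{c_1\cdot z_1+c_2\cdot z_2}{r}
\]
under $z_1=\eta_1-K$ gives $-2s\eta_1\cdot z_2-\frac{c_1\cdot\eta_1}{r}-\frac{(c_2-2Krs)\cdot z_2}{r}+\text{const}$, so it is $c_2$ that shifts, not $c_1$, contradicting the claimed $A_1$. Worse, differentiating the bilinear term $-2s\eta_1\cdot z_2$ in $z_2$ produces $-2s\eta_1=-2sK_1$, which is \emph{uncontrolled} on $\supp\chi_1$ since there $|K_1|\gtrsim|K|$ and the remaining weight $f_2f_3$ decays in $K_2,K_3$ but not in $K_1$; you cannot absorb $(sK_1)^{n+1}$ without giving up part of the $\langle K\rangle^{-\ell}$ you need. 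The paper avoids this by the algebraic split $\omega=\omega_1+2K\cdot z_1$ with $\omega_1=-2z_1\cdot K_3$, which moves the $K$-growth into the linear phase (shifting $c_1$, as required) and leaves $\partial_{z_1}\omega_1=-2K_3$ and $\partial_{z_2}\omega_1=-2z_1$, both controlled by the remaining weight. In the language of substitutions, one should center the coordinate of a \emph{non-extracted} factor, e.g.\ $\eta_2=K+z_2=K_3$, the opposite of what you propose; the region $\chi_2$ really does require centering both.

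Second, your integration by parts is a genuine non-stationary phase argument, dividing by $\partial_{\eta_i}\Phi=-\bigl(c_i^{\mathrm{shift}}/r+2s\eta_{3-i}\bigr)$. This gradient vanishes on a hyperplane inside the integration domain, and the Peetre-type bound $\langle c_i^{\mathrm{shift}}+2sr\eta_{3-i}\rangle^{-1}\lesssim\langle c_i^{\mathrm{shift}}\rangle^{-1}\langle 2sr\eta_{3-i}\rangle$ controls $\langle\cdot\rangle^{-1}$ but not the actual singularity of $(\partial_{\eta_i}\Phi)^{-1}$; after more than one step it also produces $(\partial_{\eta_1}\Phi)^{-2}$ via the cross term $\partial_{\eta_2}\partial_{\eta_1}\Phi=-2s$. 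The paper sidesteps this entirely: once the phase is split as above, it integrates by parts only against the pure oscillation $e\bigl(-\frac{(c_1-2Krs)\cdot z_1+c_2\cdot z_2}{r}\bigr)$, whose frequency is a \emph{constant}; the factors $(c_{1,j}-2K_jrs)^{-m_1}(c_{2,k})^{-m_2}$ pulled out are numbers, and the degenerate cases $|c_1-2Krs|<\tfrac12$ or $c_2=0$ are handled by integrating by parts only in the non-degenerate components, without any stationary-phase complications. You would need to restructure both the coordinate change and the IBP along these lines for the proof to close.
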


\begin{proof}
We will estimate each $v_j$ by splitting $\omega(z)$ in an appropriate manner and integrating by parts in $z$.

\underline{Estimate on $v_0$.}  Since $W_0=0$ for $|K| > 2$, the presence of $K$ in $W_0$ plays no role.  In this case we can obtain decay for large $c$ or small  $r$ by directly integrating by parts  $e\left(-\frac{c\cdot z}r\right)$ in the expression
\begin{equation}\label{eq:v0}
v_0\left(s, -\frac{c}{r}\right)=\int W_0(z) e(s\omega(z)) e\left(-\frac{c \cdot z}{r}\right) dz.
\end{equation}
Assume first that $c_1 \neq 0$, then one of the $n$ components $c_{1, j}$ of $c_1$ satisfies $|c_{1,j}| \geq n^{-1}|c_1|$.  Similarly, if  $c_2 \neq 0$, then one of the $n$ components $c_{2, k}$ of $c_2$ satisfies $|c_{2,k}| \geq n^{-1}|c_2|$.  For $c_i\ne 0$ for $i = 1,2$, using the identity 
 \begin{equation*}
 e{\left( -\frac{c\cdot z}{r}\right)}
=\left(\frac{r}{(-2\pi i)}\right)^{m_1+m_2} \!\!(c_{1, j})^{-m_1}(c_{2,k})^{-m_2}
\left(\frac{\partial}{\partial z_{1,j}}\right)^{m_1}\left(\frac{\partial}{\partial z_{2, k}}\right)^{m_2}  e{\left( -\frac{c\cdot z}{r}\right)},
\end{equation*}
we integrate by parts in \eqref{eq:v0} $m_1=m_2= n+1$ times.  Observe the bound 
\begin{align*} 
&r^{m_1+m_2}\left| \left(\frac{\partial}{\partial z_{1,j}}\right)^{m_1}\left(\frac{\partial}{\partial z_{2, k}}\right)^{m_2} W_0(z) e(\omega(sz))\right|  \\
&\lesssim r^{m_1+m_2}\sum_{\substack{\alpha+\beta=(m_1, m_2)\\ \alpha, \beta \in \mathbb{N}^2}} \left|\left(\frac{\partial}{\partial z_{1,j}}\right)^{\alpha_1}\left(\frac{\partial}{\partial z_{2, k}}\right)^{\alpha_2} W_0(z)\right| (2\pi i s)^{|\beta|} |z_2|^{\beta_1} |z_1|^{\beta_2} \nonumber \\
&\lesssim \langle r s \rangle^{m_1+m_2} \langle K \rangle^{-\ell}  \langle K_1 \rangle^{-\ell}  \langle K_2 \rangle^{-\ell}\langle K_3  \rangle^{-\ell}\langle K_3- K\rangle^{m_1} \langle K_1-K\rangle^{m_2}\chi_0.
\end{align*}

Then after integrating by parts one can write \eqref{eq:v0} as a sum of terms of the type
\[P(r,s)\int V(z) e(s\omega(z))e\left(-\frac{c \cdot z}{r}\right) dz=P(r,s)v\left(s-\frac cr\right)\]
where $\abs{P(r,s)}\lesssim \langle r s \rangle^{m_1+m_2} $ and $\widehat v$ solves the Schr\"odinger-type equation \eqref{eq:hypsch2} with initial data $V$. Additional derivatives falling on $V$ correspond to additional derivatives falling on $W_0$ and hence by estimate \eqref{eq:elemest} (distinguishing between the cases $s<1$ and $s>1$), we obtain the contribution $A_0$.  If either $c_1=0$ or $c_2=0$, we simply integrate by parts $n+1$ times on the non zero component of $e\left(-\frac{c\cdot z}r\right)$. If $c_1 = c_2 = 0$, estimate~\eqref{PrecisedvEst-0} is the desired bound.

\medskip

\underline{Estimate on $v_1$.}  Since in this case $|K_1| \gtrsim |K|$, we want to ensure that any integration by parts does not yield a power of $K_1$.  For this reason we write
\begin{align*}
\omega(z)&= |K + K_2 - K_3|^2 - |K_2|^2 + |K_3|^2 - |K|^2\\
\phantom{\omega(z)} & = 2|K_3|^2 - 2K_2\cdot K_3 + 2K\cdot (K_2-K_3)\\
\phantom{\omega(z)} & = 2K_3\cdot(K_3 - K_2) + 2K\cdot(K_1-K)\\
\phantom{\omega(z)} &\overset{def}{=} \omega_1(z) + 2K\cdot z_1
\end{align*}
Since $ \omega_1(z)= 2K_3\cdot(K_3 - K_2) $, any derivative of $\omega_1(z)$ can be bounded by a polynomial expression in $\langle K_3\rangle$ and $\langle K_3 -K_2\rangle$.   Therefore, writing 
\begin{multline*}
v_1\left(s, -\frac{c}{r}\right)=\int W_1(z) e(s\omega(z)) e\left(-\frac{c \cdot z}{r}\right) dz=\\
\int W_1(z) e(s\omega_1(z)) e\left(-\frac{(c_1-2Krs) \cdot z_1+c_2 \cdot z_2}r\right) dz,
\end{multline*}
for $|c_1- 2Krs| > \frac{1}{2}$ or $c_2\ne 0$, we  integrate by parts $m_1=n+1$ in $z_1$ and $m_2=n+1$ in $z_2$.  Using the identity 
\begin{multline*}    
 e(-r^{-1}\left(\left(c_1-2Krs\right) \cdot z_1+c_2\cdot z_2\right) 
=\left(\frac{r}{(-2\pi i)}\right)^{m_1+m_2} (c_{1, j}-2K_jrs)^{-m_1}(c_{2,k})^{-m_2}\times\\
\left(\frac{\partial}{\partial z_{1,j}}\right)^{m_1}\left(\frac{\partial}{\partial z_{2, k}}\right)^{m_2} e(-r^{-1}\left(\left(c_1-2Krs\right) \cdot z_1+c_2\cdot z_2\right) ,
\end{multline*}
the bound
\begin{align*} 
&r^{m_1+m_2}\left| \left(\frac{\partial}{\partial z_{1,j}}\right)^{m_1}\left(\frac{\partial}{\partial z_{2, k}}\right)^{m_2} W_1(z) e(s\omega_1(z))\right|  \\
&\lesssim r^{m_1+m_2}\sum_{\substack{\alpha+\beta=(m_1, m_2)\\ \alpha, \beta \in \mathbb{N}^2}} \left|\left(\frac{\partial}{\partial z_{1,j}}\right)^{\alpha_1}\left(\frac{\partial}{\partial z_{2, k}}\right)^{\alpha_2} W_1(z)\right| (2\pi i s)^{|\beta|} |\partial_{z_1} \omega_1|^{\beta_1} |\partial_{z_1} \omega_1|^{\beta_2} \nonumber \\
&\lesssim \langle rs \rangle^{m_1+m_2} \langle K \rangle^{-\ell}  \langle K_2 \rangle^{-\ell}\langle K_3 \rangle^{-\ell} ( \langle K_3 \rangle + \langle K_2 \rangle)^{m_1 + m_2},
\end{align*}
along with analogous bounds on derivatives, and finally the estimate \eqref{eq:elemest}, we obtain the contribution $A_1$.

If either $|c_1- 2Krs|<\frac 12$ or $c_2=0$ we proceed as above and only integrate by parts on the nontrivial components.

\medskip

\underline{Estimate on $v_2$.}  Since in this case $|K_2| \gtrsim |K|$, we want to ensure that any integration by parts does not yield a power of $K_2$.  For this reason we write
\begin{align*}
\omega(z)&= |K_1|^2 - |K_1 + K_3 - K|^2 + |K_3|^2 - |K|^2\\
\phantom{\omega(z)} &= -2K_1\cdot K_3 + 2K\cdot(K_1+K_3)-2|K|^2\\
\phantom{\omega(z)} &= 2K_1\cdot K_3 + 2|K|^2 +  2K\cdot(z_1 +z_2)\\
\phantom{\omega(z)} &\overset{def}{=} \omega_2 + 2K\cdot(z_1 + z_2)
\end{align*}
Since $ \omega_2(z)=2K_1\cdot K_3 + 2|K|^2 $, any derivative of $\omega_2(z)$ can be bounded by a polynomial expressions in $\langle K_1\rangle$ and $\langle K_3 \rangle$.   Therefore we write 
\begin{multline*}
v_2\left(s, -\frac{c}{r}\right)=\int W_2(z) e(s\omega(z)) e\left(-\frac{c \cdot z}{r}\right) dz=\\
\int W_2(z) e(s\omega_2(z)) e\left(-\frac{(c_1-2Krs) \cdot z_1+\left(c_2 -2Krs\right)\cdot z_2}r\right)dz,
\end{multline*}
and proceed exactly as in the case of $v_1$, which ultimately gives the contribution $A_2$.

\medskip

\underline{Estimate on $v_3$.}  It is symmetrical to $v_1$ and will therefore not be detailed.
\end{proof}
 We  finish the proof of \eqref{eq:rsnntbnd7} (and hence that of Theorem \ref{th:disbnd}) by 
 using \eqref{eq:sqcb} to bound
 \[
 \sup_c \left |S_{\mu L^2}(q,c) \right | \lesssim |q|^{n +1},
\]
and combining equation \eqref{penguin}, Proposition \ref{dispersive lemma},  and  estimate \eqref{eq:hhatb} to bound
\begin{equation}\label{eq:ircbnd}
\begin{split}
\sum_c | I_\mu(r,c)| \lesssim& \langle K \rangle^{-\ell}\int_{\RR} |\widehat h_{\chi}(r,s)|\frac{\langle rs\rangle^{2n+2}}{\langle s \rangle^{n}}   \sum_c\sum_{i=0}^3 A_i\,ds\\
  \lesssim& \langle K \rangle^{-\ell} \int_{\RR}  \frac{1}{\langle s \rangle^{n}} \,ds
\lesssim \langle K \rangle^{-\ell}.
\end{split}
\end{equation}
Thus from  \eqref{eq:rsnntbnd8} we conclude,
\begin{align*}
\sum_{\substack{z\in \mathbb{Z}_L^{2n}\\ \omega_\mu(z) =0}}  W(z)\lesssim&
 \frac{L^{2n-2}}{C_L}\sum_{q=1}^L \sum_c |S_{\mu L^2} (q,c)| \frac 1{q^{2n}} | I_\mu(r,c)|\\
 \lesssim&\langle K \rangle^{-\ell}L^{2n-2}\sum_{q=1}^L q^{-n+1} \lesssim \langle K \rangle^{-\ell}\begin{cases} L^{2n-2}  \quad \text{if} \quad n>2,\\											
 L^2 \log L  \quad \text{if} \quad n=2.
\end{cases}
\end{align*}


\section{Asymptotics of the resonant sum}\label{sec:asymp}

The purpose of this section is to prove the following theorem:

\begin{theorem}\label{th:asymptotics}
Let $f_1, f_2, f_3\in X^{\ell+n+2, N}(\RR^n)$ for $\ell>2n$ and $N>3n+2$  and  set 
\begin{equation}\label{eq:wz}
W(z) = f_1( K + z_1) \bar{f}_2(K +z_1 +
z_2)f_3( K + z_2),
\end{equation}
and
\begin{equation}\label{def of T(W)}
\mathcal T(W)(K)=\mathcal T(f_1, f_2, f_3)(K):=\int_{\mathbb{R}^{2n}} \delta(\omega(z)) W(z) dz,
\end{equation} 
and recall that $\omega(z)=z_1 \cdot z_2$ and
\[
Z_n(L) = \left\{
\begin{array}{ll}
\frac{1}{\zeta(2)} L^2 \log L & \mbox{if $n=2$} \\
\frac{\zeta(n-1)}{\zeta(n)} L^{2n-2} & \mbox{if $n \geq 3$}
\end{array} \right.
\]
\begin{enumerate}
\item[1)] For $n\geq 3$, define 
\begin{align*}
&\Delta(W) =\frac{1}{Z_n(L)}\sum_{\substack{z\in \mathbb{Z}_L^{2n}\\ \omega(z) =0}} W(z) - \mathcal T(W),
\end{align*}
\begin{equation}
\|\Delta(W)\|_{X^\ell} \lesssim \prod_{i=1}^3 \|f_i\|_{X^{\ell+n+2, N}(\RR^n)}\begin{cases} L^{-1}\log L \quad \text{ if } n=3\\
L^{-1} \quad \text{ if }n \geq 3
\end{cases}
\label{eq: Delta bound}
\end{equation}

\item[2)] For $n=2$, define
\begin{align}
\widetilde \Delta(W) =\frac{1}{Z_2(L) }  \sum_{K_1 \cdot K_3 = 0}W(K_1, K_3) - \left(\mathcal T(W)  + \frac{\zeta(2)}{\log L}\mathcal C(W)\right) \label{eq: def of tilde Delta}
\end{align}
where $\mathcal C(W)$ is a correction operator that is independent of $L$ and is defined explicitly in \eqref{def of C(K)}.

If $f_j\in X^{\ell+6, N}(\RR^n)$ for $\ell>4$ and $N>14$ then
\begin{equation}\label{eq: Delta bound2}
\begin{split}
&\| \mathcal C(W)\|_{X^\ell} \lesssim \prod_{i=1}^3 \|f_i\|_{X^{\ell+6, N}(\RR^2)}.\\
&\|\widetilde \Delta(W)\|_{X^\ell} \lesssim L^{-1/3+}\prod_{i=1}^3 \|f_i\|_{X^{\ell+6, N}(\RR^2)} 
\end{split}
\end{equation}
\end{enumerate}
\end{theorem}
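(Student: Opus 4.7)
The plan is to apply the circle method identity of Theorem \ref{th:cm} with $\mu=0$ and weight $W$ defined by \eqref{eq:wz}, giving
\[
\sum_{\substack{z \in \mathbb{Z}^{2n}_L \\ \omega(z)=0}} W(z) = \frac{L^{2n-2}}{C_L}\sum_{q=1}^L \sum_{c \in \mathbb{Z}^{2n}} S(q,c)\, q^{-2n}\, I(r,c),
\]
then to isolate the $c=0$ term as the source of the normalization $Z_n(L)\mathcal{T}(W)$, and to control the remaining $c\neq 0$ terms as errors. The key inputs are: Corollary \ref{cor:dirac} for the convergence $h_\chi(r,\cdot)\to\delta$; Proposition \ref{dispersive lemma} for the dispersive decay of $I(r,c)$ in $c$; Lemmas \ref{c:trivial_sq_est}--\ref{l:SLC_Omega_not_0} for bounds and asymptotics on the arithmetic sums $S(q,c)$; and the $K$-decay isolated in \eqref{eq:wi} for maintaining the $X^\ell$ norm in $K$.

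For the $c=0$ contribution, I would analyze
\[
I(r,0) = \int_{\mathbb{R}^{2n}} W(z)\, h_\chi(r,\omega(z))\, dz
\]
using the co-area formula, writing it as $\int h_\chi(r,u) F_W(u)\, du$ with $F_W(u) = \int_{\omega(z)=u} W/|\nabla\omega|\, d\sigma$. The function $F_W$ is Lipschitz in $u$ with norm bounded by $\langle K\rangle^{-\ell}\prod_i \|f_i\|_{X^{\ell+n+2,N}}$, once the degeneracy of $\nabla\omega$ at the origin is handled by the decay of $W$ away from a neighborhood of $z=0$. Corollary \ref{cor:dirac} then gives $I(r,0) = \mathcal{T}(W) + O(r)$, and combined with $|S(q,0)|\lesssim q^{n+1}$ from Lemma \ref{c:trivial_sq_est} the $O(r)$ error contributes
\[
L^{-1}\sum_{q\leq L} q^{-2n}\cdot q^{n+1}\cdot q \lesssim L^{-1}\sum_{q\leq L} q^{2-n},
\]
which is exactly $L^{-1}\log L$ for $n=3$ and $L^{-1}$ for $n\geq 4$. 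The leading-order sum $\sum_{q\leq L}q^{-2n}S(q,0)$ is evaluated by Lemma \ref{weightedSqcSum} to be $\zeta(n-1)/\zeta(n)$ up to a tail of size $L^{2-n}$, matching the normalization $Z_n(L)$ and contributing an error of the same order after dividing by $Z_n(L)/L^{2n-2}$.

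For $c\neq 0$, Proposition \ref{dispersive lemma} provides decay in $c$ (or in $c-2Krs$) that makes $\sum_c |I(r,c)|$ finite, together with an extra factor $r^{n+1}$ away from the four exceptional integer points. Combined with $|S(q,c)|\lesssim q^{n+1}$ and summed over $q\leq L$, this yields a total contribution of order $L^{-1}\langle K\rangle^{-\ell}\prod_i\|f_i\|_{X^{\ell+n+2,N}}$, which after dividing by $Z_n(L)/L^{2n-2}$ matches the asserted size. Near the exceptional points only $O(1)$ values of $c$ per $(q,s)$ participate, and these are handled by the weaker decay $\langle rs\rangle^{2n+2}/\langle s\rangle^n$ from Proposition \ref{dispersive lemma} integrated against the rapidly decaying $\widehat{h}_\chi$ from Corollary \ref{cor:Fh}, giving a contribution of the same or smaller order. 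This completes the proof of \eqref{eq: Delta bound}.

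The chief obstacle is the case $n=2$. Here $\sum_q q^{-4} S(q,0)$ only diverges logarithmically and, by Lemma \ref{l:SLC_Omega_0}, equals $(\log L + \gamma)/\zeta(2) - \zeta'(2)/\zeta(2)^2 + O(L^{-1/2+})$. After dividing by the normalization $\log L/\zeta(2)$, the subleading constant $\gamma/\zeta(2) - \zeta'(2)/\zeta(2)^2$ produces an order $1/\log L$ correction that must be explicitly subtracted. In addition, exceptional $c\neq 0$ with $\omega(c)=0$ have the same Ramanujan sum as the $c=0$ term by \eqref{e:sqc_formula} and contribute at the same order, while Lemma \ref{l:SLC_Omega_not_0} shows the remaining $\omega(c)\neq 0$ terms give polynomial decay. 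Assembling the constant correction to $M(L)$, the subleading piece of $I(r,0) - \mathcal{T}(W)$ extracted from the finer expansion of $h_\chi$ via Lemma \ref{hummingbird}$(ii)$, and the $\omega(c)=0$ off-diagonal contributions into the correction operator $\mathcal{C}(W)$ of \eqref{def of C(K)} leaves a residual of polynomial size in $L$, with the final rate $L^{-1/3+}$ arising from optimizing the Perron-type truncation in $q$ against the dispersive decay in $c$ from Proposition \ref{dispersive lemma}. The higher regularity requirement $\ell+6$ and $N>14$ on the $f_i$ is precisely what is needed to close the $X^\ell$-norm estimates after these extra derivatives have been spent.
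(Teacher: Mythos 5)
Your proposal follows the paper's strategy essentially step for step: apply Theorem \ref{th:cm} with $\mu=0$, isolate the $c=0$ term as the main contribution to $Z_n(L)\mathcal T(W)$ via the Dirac-mass asymptotics of $h_\chi$ (Corollary \ref{cor:dirac} / Lemma \ref{lem:boundI}) and the $S(q,0)$ sums (Lemmas \ref{weightedSqcSum}, \ref{l:SLC_Omega_0}), then control $c\neq 0$ with Proposition \ref{dispersive lemma}, Lemma \ref{lem:icr}, and Lemmas \ref{c:trivial_sq_est}, \ref{l:SLC_Omega_not_0}; for $n=2$, collect the subleading pieces into $\mathcal C(W)$. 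The only noteworthy inaccuracy is your attribution of the final $L^{-1/3+}$ rate for $n=2$: in the paper it comes from the $c=0$ sum $\mathcal A$, by splitting at $q=\rho L$ and balancing the Lipschitz error $O(\rho)$ from $|I(r,0)-\mathcal I(0)|\lesssim r$ against the $O((\rho L)^{-1/2+})$ contour/zeta-function error from Lemma \ref{l:SLC_Omega_0}, with the optimal choice $\rho=L^{-1/3}$; the dispersive $c$-decay already produces the (better) $O(L^{-1/2+})$ error for $\mathcal B$ and does not participate in that optimization.
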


This  theorem  is proved by finding the asymptotics of the resonant sum using the circle method: recall that Theorem \ref{th:cm} gives
\[
\sum_{\substack{z\in \mathbb{Z}_L^{2n}\\ \omega(z) =0}} W(z)= \frac{L^{2n-2}}{C_L}
\sum_{q=1}^L \sum_c S(q,c) \frac 1{q^d} I(r, c)~.
\]
Thus the proof of the  theorem amounts to finding the asymptotics to $I(r, c)$  and $S(q,c)$.

\subsection{Analysis of $I(r, 0)$}
Recall that 
\[
I(r,0) = \int_{\mathbb R^{2n}} W_\chi(z)h_\chi\left(r,\omega(z)\right) \, dz
= \int_{\mathbb R} h_\chi(r, \rho) \mathcal I(\rho)\, d\rho
\]
where 
\begin{equation}\label{def of Irho}
 \mathcal I(\rho) =\chi(\rho)\int_{\mathbb R^{2n}} \delta(\omega(z)- \rho)W(z) dz~.
\end{equation}

$\mathcal I(\rho)$ can be  written as
\begin{equation*}
 \mathcal I(\rho) =\chi(\rho)\int_{\RR^{2n}} \int_{\RR}e(s\Omega_3(z)-s\rho) ds W(z) \,dz=\chi(\rho)\int_{\RR}e(-s\rho) v(s, 0)\, ds~.
\end{equation*}
where $v$ is a solution of the PDE
\[
\partial_s \widehat{v}(s,\xi) = 2\pi i \Omega_3(\xi)  \widehat{v}(s,\xi),  \qquad  \widehat{v}(0,\xi) = W(\xi)~,
\]
and from \eqref{PrecisedvEst-0}, we have
\begin{equation}\label{aim0}
\sup_{|\rho|\le 1/2} |\mathcal{I}(\rho)| \lesssim \langle K\rangle^{-\ell}\prod_{i=1}^3 \|f_i\|_{X^{\ell}}~,
\end{equation}
\begin{equation}\label{eq:boundg}
|I(r,0)|   \lesssim \langle K\rangle^{-\ell}\prod_{i=1}^3 \|f_i\|_{X^{\ell}}~.
\end{equation}

\begin{lemma}\label{lem:boundI}
Suppose $f_i \in X^{\ell, 1}$ for $\ell >3n+2$ and $i=1, 2, 3$. For $n\ge 3$, $\mathcal{I}\in C^1(-\frac 12,\frac 12)$.  For $n=2$,  $\mathcal I(\rho)$  in Lipschitz on $(-1/2, 1/2)$, and therefore for $n\ge 2$
\begin{equation}\label{eq:limitI}
\left |I(r,0) - \mathcal{I}(0)\right | \lesssim  r \langle K\rangle^{-\ell}\prod_{i=1}^3 \|f_i\|_{X^{\ell, 1}}~.
\end{equation}
\end{lemma}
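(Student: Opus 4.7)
The identity $\int h_\chi(r,\rho)\,d\rho = 1 + O(r^N)$ (obtained from \eqref{eq:appxid} with $n=0$), combined with the bound \eqref{aim0} on $\mathcal{I}(0)$, reduces the estimate to
\[
I(r,0) - \mathcal{I}(0) = \int h_\chi(r,\rho)\bigl(\mathcal{I}(\rho) - \mathcal{I}(0)\bigr)\,d\rho + O\!\left(r^N\langle K\rangle^{-\ell}\prod_{i=1}^3\|f_i\|_{X^\ell}\right).
\]
By Corollary \ref{cor:dirac}, it then suffices to prove the Lipschitz estimate
\[
\|\mathcal{I}\|_{\operatorname{Lip}(-1/2,1/2)} \lesssim \langle K\rangle^{-\ell}\prod_{i=1}^3\|f_i\|_{X^{\ell,1}}.
\]

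To analyze the regularity of $\mathcal{I}$, I would change coordinates to $u = z_1 + z_2$, $v = z_1 - z_2$, in which $\omega(z) = (|v|^2-|u|^2)/2$, and apply the coarea formula on the spheres $|v| = \sqrt{|u|^2 + 2\rho}$ to obtain
\[
\mathcal{I}(\rho) = 2^{-n}\chi(\rho)\int_{|u|^2+2\rho\ge 0}(|u|^2+2\rho)^{\frac{n-2}{2}}\int_{S^{n-1}}\widetilde W\bigl(u,\sqrt{|u|^2+2\rho}\,\theta\bigr)\,d\theta\,du,
\]
where $\widetilde W(u,v) := W((u+v)/2,(u-v)/2)$. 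Formal differentiation in $\rho$ produces (i) a bulk integrand of the schematic form $(|u|^2+2\rho)^{(n-4)/2}\widetilde W + (|u|^2+2\rho)^{(n-3)/2}\theta\cdot\nabla_v\widetilde W$, and (ii) for $\rho<0$ a boundary contribution on the sphere $|u|=\sqrt{-2\rho}$ carrying the weight $(|u|^2+2\rho)^{(n-2)/2}\big|_{|u|=\sqrt{-2\rho}}$, which vanishes when $n\ge 3$ and equals $1$ when $n=2$.

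For $n\ge 3$ the boundary term vanishes and the bulk singularity $(|u|^2+2\rho)^{-1/2}$ on the sphere $|u|=\sqrt{-2\rho}$ is absorbed by the volume element $|u|^{n-1}\,d|u|$, giving $\mathcal{I}\in C^1(-1/2,1/2)$ with uniformly bounded derivative. For $n=2$ the boundary contribution instead produces a jump in $\partial_\rho\mathcal{I}$ at $\rho=0$ of size $\sim\widetilde W(0,0) = f_1(K)\overline{f_2(K)}f_3(K)$, so $\mathcal{I}\notin C^1$ in general; nevertheless $\partial_\rho\mathcal{I}$ stays bounded almost everywhere on $(-1/2,1/2)$ (the $(|u|^2+2\rho)^{-1/2}$ singularity remains locally integrable against $|u|\,d|u|$ in $\RR^2$), which still yields Lipschitz regularity. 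To pull out the prefactor $\langle K\rangle^{-\ell}$, I would adapt the decomposition $W = W_0 + W_1 + W_2 + W_3$ from Section~\ref{sec:res-bound}: the identity $K_1 - K_2 + K_3 = K$ forces at least one of the three arguments of the $f_i$'s to have magnitude $\gtrsim\langle K\rangle$, which supplies the $\langle K\rangle^{-\ell}$ factor; the remaining two factors — together with the first derivative required to handle the second bulk term above — are absolutely integrable under the hypothesis $\ell>3n+2$.

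The main obstacle is the $n=2$ case: one must verify that the $(|u|^2+2\rho)^{-1/2}$-weighted integral of $\nabla_v\widetilde W$ stays bounded uniformly in $\rho\in(-1/2,1/2)$ after the $W = \sum_{j} W_j$ splitting is inserted, which exploits the $X^\ell$-decay and the first-derivative bound on each $f_i$ simultaneously; the analogous estimates for $n\ge 3$ are comparatively straightforward because the relevant exponent of $(|u|^2+2\rho)$ in the bulk integrand is less singular.
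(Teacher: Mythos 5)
Your proposal is correct, and for $n=2$ it is essentially the paper's argument: the same rotation of coordinates ($z_1 = y+x$, $z_2 = y-x$, equivalently your $u,v$), the coarea slicing along $\omega = \rho$, and the observation that for $n=2$ the only contribution to $\partial_\rho\mathcal{I}$ on the bulk is the $(|u|^2+2\rho)^{-1/2}\,\theta\cdot\nabla_v\widetilde W$ term (the $(|u|^2+2\rho)^{(n-4)/2}\widetilde W$ term carries the coefficient $\tfrac{n-2}{2}$, which vanishes at $n=2$ --- worth stating explicitly, since otherwise that term would be non-integrable against $|u|\,d|u|$ near the boundary sphere), plus a boundary term whose $\rho\to 0^-$ limit is $\sim\widetilde W(0,0)$, producing the jump in $\partial_\rho\mathcal I$ and hence only Lipschitz regularity. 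This is precisely what the paper does, including the $\langle K\rangle^{-\ell}$ gain from the constraint $K_1-K_2+K_3=K$.

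Where you diverge from the paper is in the case $n\ge 3$. You push the same coarea machinery through and verify that the boundary term vanishes (exponent $(n-2)/2>0$) and the bulk singularity is absorbed by the volume element; this is correct but requires tracking several powers. The paper instead uses the Fourier representation $\mathcal I(\rho)=\chi(\rho)\int e(-s\rho)\,v(s,0)\,ds$ together with the dispersive bound $|v(s,0)|\lesssim \langle s\rangle^{-n}\langle K\rangle^{-\ell}$ from Lemma~\ref{lem:disp}: since $\int|s|\langle s\rangle^{-n}\,ds<\infty$ exactly when $n\ge 3$, differentiating under the integral is immediately justified and $\mathcal I\in C^1$ with $\sup|\partial_\rho\mathcal I|\lesssim\langle K\rangle^{-\ell}$ falls out in one line. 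This is why the paper only resorts to the coarea argument in the borderline dimension $n=2$, where the Fourier route just fails ($\int|s|\langle s\rangle^{-2}\,ds$ diverges). Your route is valid but does more work than necessary for $n\ge 3$; either way, once the Lipschitz bound on $\mathcal I$ is in hand, Corollary~\ref{cor:dirac} completes the proof exactly as you say.
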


\proof  
Without loss of generality we can assume  $\|f_j\|_{\ell, 1} = 1$ for $j=1, 2, 3$.

For $n\ge 3$, the result follows from writing  $\mathcal I(\rho) =\chi(\rho)\int_{\RR}e(-s\rho) v(s, 0) ds$  and the fact that $ |v(s,0) \lesssim \langle s \rangle^{-n}\langle K \rangle^{-\ell}$.

For $n=2$, we  introduce coordinates adapted to the surface
$\omega(z) = \rho$. This is accomplished as follows: First we
rotate our coordinates $(z_1, z_2) \overset{\Phi}{\to} (x,y) \in
\mathbb{R}^{4}$
\begin{align*}
z_1 &= y + x\\
z_2 &= y-x~.
\end{align*}
and then use $\rho = |y|^2 - |x|^2$ as a coordinate. Therefore, we
arrive at the coordinates 
\[
\begin{split}
&(\rho, \theta', |x|,  \theta)\in \mathbb{R} \times\mathbb{S}^{1}\times\{|x| \in  \mathbb{R}^+;  \; |x| \ge \sqrt{-\rho}~, \; \mbox{for } \rho \le 0 \}  \times\mathbb{S}^{1} \\[.3em]
&(x,y) = (|x|\theta, \sqrt{\rho + |x|^2}\theta')~.\\[.3em]
\end{split}
\]
Writing 
\[
\widetilde{W}(|x|, |y|) = \widetilde{W}( |x|, \sqrt{\rho + |x|^2}) = \int\limits_{\mathbb{S}^{1}\times \mathbb{S}^{1}}
W\circ \Phi(|x|\theta,\sqrt{\rho + |x|^2}\theta')d\theta d\theta^\prime~,
\]
then   $\mathcal{I}(\rho)$ is given by
\[
\mathcal{I}(\rho) = \chi(\rho) \times
\begin{cases}\displaystyle
\int\limits_0^\infty  \widetilde{W}(|x|, \sqrt{\rho + |x|^2}) |x|\, d|x| \quad \mbox{for } \rho \ge 0~,\\[2em]
\displaystyle
\int\limits_{\sqrt{-\rho}}^\infty \widetilde{W}(|x|, \sqrt{\rho + |x|^2})|x|\, d|x|  \quad \mbox{for } \rho \le 0~.
\end{cases}
\]

Since 
\begin{align*}
& \int\limits_0^\infty \left|\partial_{|y|} \widetilde{W}(|x|, \sqrt{\rho + |x|^2}) \frac{|x|}{\sqrt{\rho + |x|^2}}\right| \, d|x|  \lesssim\<K\>^{-\ell}, \quad \mbox{for } 0\le \rho \le  \frac12~,\\
& \int\limits_{\sqrt{-\rho}}^\infty \left|\partial_{|y|} \widetilde{W}(|x|, \sqrt{\rho + |x|^2})\frac{|x|}{\sqrt{\rho + |x|^2}}\right| \, d|x| \lesssim\<K\>^{-\ell}, \quad \mbox{for } - \frac12\le \rho \le  0,\\
&\left| \widetilde{W}(|x|,0)  \right| \lesssim\<K\>^{-\ell}~,
\end{align*}
then we conclude  that $\mathcal{I} \in Lip$ and
\[
\sup_{|\rho|\le 1/2} |\partial_\rho \mathcal{I}(\rho)| \lesssim \langle K\rangle^{-\ell}\prod_{i=1}^3 \|f_i\|_{X^{\ell,1}}~.
\]
Thus Corollary \ref{cor:dirac} gives the stated bound.
\endproof

\subsection{Bound on $I(r,c)$  for $c\neq 0$} 
Here we extend the analysis of Section \ref{sec:res-bound} to obtain  bounds  on $I(r,c)$ that decay in $r$ for  $c\ne 0$.
This is the content of the following lemma.

Recall from \eqref{penguin} that $I(r,c)$ can be written as 
\[
I(r,c) =  \int \widehat{h}_\chi(r,s) v\left(s, -\frac cr\right) \, ds
\]
where $u$ is a solution of the PDE
\begin{equation}\label{eq:hypsch3}
\begin{split}
&\partial_s \widehat{v}(s,\xi) = 2\pi i \omega(\xi)  \widehat{v}(s,\xi) \\
& \widehat{v}(0,\xi) = W(\xi).
\end{split}
\end{equation}

\medskip
\begin{lemma} \label{lem:icr} 
 Assume  $W$ is as in \eqref{eq:wz} with $f_j\in X^{\ell+n+2, N}(\RR^n)$ for $\ell>2n$ and $N>3n+2$.
  \begin{enumerate}
\item 
For every $0\le \alpha <1$ and  $c\neq 0$,
\begin{equation}\label{eq:sumcneq0}
\sum_{c\ne 0} |c|^\alpha  |I(r, c)| \lesssim r^{n-1} \langle K\rangle^{-\ell} \prod_{i=1}^3 \|f_i\|_{X^{\ell+n+2, 3n+2}(\RR^n)}
\end{equation}

\item The estimate above holds with $I(r, c)$ replaced by  $r\partial_r I(r,c)$ for $c\neq 0$.
\end{enumerate}
\end{lemma}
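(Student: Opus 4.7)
The plan is to start from the Fourier representation
\[
I(r,c) = \int_{\RR} \widehat{h}_\chi(r,s)\, v\!\left(s,\,-\frac{c}{r}\right)\, ds
\]
established in \eqref{penguin}, then combine the decay of $\widehat h_\chi$ from Corollary \ref{cor:Fh} with the dispersive bound on $v$ from Proposition \ref{dispersive lemma}, and finally sum over $c\neq 0$.

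For part (1), I would apply Proposition \ref{dispersive lemma} with the decay exponent replaced by $\ell+n+2$ (using $f_j\in X^{\ell+n+2,N}$), obtaining
\[
\left|v(s,-c/r)\right| \lesssim \langle K\rangle^{-\ell-n-2}\, r^{n+1}\, \langle s\rangle\, \langle rs\rangle^{n+1}\,(A_0+A_1+A_2+A_3)
\]
at almost every $s$; the exceptional set, where $c$ coincides with one of the shifted values $(0,0)$, $([2Krs],0)$, $(0,[2Krs])$, $([2Krs],[2Krs])$, is a finite set in $s$ for fixed $c\ne0$ and thus contributes nothing to the $ds$-integral. Combined with $|\widehat h_\chi(r,s)|\lesssim (1+|rs|^N)^{-1}$, this produces
\[
|I(r,c)|\lesssim \langle K\rangle^{-\ell-n-2}\, r^{n+1}\int_{\RR} \frac{\langle s\rangle\langle rs\rangle^{n+1}}{1+|rs|^N}\,(A_0+A_1+A_2+A_3)\,ds.
\]
To sum against $|c|^\alpha$, I would use $\sum_c |c|^\alpha A_0 \lesssim 1$ (convergent since $\alpha<1$), and for $i=1,2,3$ the translation estimate $|c|^\alpha \lesssim |c_i-2Krs|^\alpha + |2Krs|^\alpha$ together with absolute convergence of the shifted sum yields $\sum_c |c|^\alpha A_i \lesssim \langle Krs\rangle^\alpha \lesssim \langle K\rangle^\alpha\langle rs\rangle^\alpha$. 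Splitting the resulting $s$-integral at $|rs|=1$ bounds it by $O(r^{-2})$ provided $N$ is large enough (which holds under $N>3n+2$). Collecting factors, the total is $\lesssim \langle K\rangle^{-\ell-n-2+\alpha}\, r^{n-1}\lesssim \langle K\rangle^{-\ell}\, r^{n-1}$, since $\alpha<1\le n+2$.

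For part (2), I would differentiate under the integral,
\[
r\partial_r I(r,c) = \int_{\RR} r\partial_r\widehat h_\chi(r,s)\, v(s,-c/r)\, ds + \int_{\RR} \widehat h_\chi(r,s)\, r\partial_r v(s,-c/r)\, ds.
\]
The first integrand is handled exactly as in part (1): Lemma \ref{hummingbird}(ii) gives $r\|\partial_r h\|_{L^1_y}\lesssim 1$, from which a repetition of the argument of Corollary \ref{cor:Fh} shows that $r\partial_r\widehat h_\chi$ enjoys the same pointwise decay as $\widehat h_\chi$. For the second integrand, the chain rule produces the formally dangerous factor $(2\pi i\, c\cdot z)/r$ inside the $z$-integral defining $v(s,-c/r)$. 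I would remove this factor by a single integration by parts in $z$: writing $c_j\, e(-c\cdot z/r) = \tfrac{ir}{2\pi}\partial_{z_j} e(-c\cdot z/r)$ and integrating by parts transfers the derivative onto $z_j W(z)\, e(s\omega(z))$, producing an integrand of the same form as that defining $v(s,-c/r)$ but with $W$ replaced by $\partial_{z_j}(z_j W) + 2\pi i s\, z_j W\,\partial_{z_j}\omega$. This substitution costs one derivative and one extra power of $\langle z\rangle$ on the factors $f_i$, plus one extra power of $\langle s\rangle$ in the integrand; all are absorbed under the hypotheses $f_j\in X^{\ell+n+2,N}$ with $N>3n+2$. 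Rerunning the argument of part (1) on this modified $v$ gives the same bound.

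The main obstacle is the bookkeeping in part (2): the formal multiplier $c/r$ in $r\partial_r v(s,-c/r)$ grows with $|c|$, so a direct estimate summing $|c|^\alpha \cdot |c|/r \cdot A_i$ over $c\ne 0$ does not even converge. The integration-by-parts trick above is what converts this apparent divergence into a controlled loss of one derivative and one weight on $W$, which is exactly what the higher regularity norm $X^{\ell+n+2,N}$ is designed to accommodate. By contrast, part (1) is essentially a careful accounting of the bounds already supplied by Proposition \ref{dispersive lemma} and Corollary \ref{cor:Fh}, in which the extra weight $n+2$ precisely absorbs the $\langle K\rangle^\alpha$ that appears when summing $A_1,A_2,A_3$ against $|c|^\alpha$.
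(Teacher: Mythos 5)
Your overall route—the Fourier representation of $I(r,c)$ from \eqref{penguin}, the dispersive bound from Proposition~\ref{dispersive lemma}, the rapid decay of $\widehat{h}_\chi$ from Corollary~\ref{cor:Fh}, and then summation over $c$—is the same as the paper's. Your bookkeeping of the $c$-sums ($\sum_c |c|^\alpha A_0 \lesssim 1$, and the shift argument giving $\sum_c |c|^\alpha A_i \lesssim \langle Krs\rangle^\alpha$) and the splitting of the $s$-integral at $|rs|=1$ are also correct. The treatment of part (2) via integration by parts in $z$ is a reasonable and explicit rendering of what the paper states more tersely.

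However, there is a genuine gap in part (1): your claim that the exceptional set of $s$ (where $c$ coincides with one of $(0,0)$, $([2Krs],0)$, $(0,[2Krs])$, $([2Krs],[2Krs])$) is a finite set, hence null, is false. The relevant condition from the proof of Proposition~\ref{dispersive lemma} is $|c_i - 2Krs| < \tfrac12$ for one or more components, which for fixed $c\neq 0$ and fixed $K,r$ defines an interval of $s$ of length of order $(|K|r)^{-1}$---positive measure. On this set only the unconditional estimate \eqref{PrecisedvEst-0} is available, namely $|v|\lesssim \langle s\rangle^{-n}\langle K\rangle^{-\ell-n-2}$, which lacks the crucial $r^{n+1}$ factor. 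If you integrate that bound against $\widehat{h}_\chi$ you get $O(1)$ in $r$, not $O(r^{n-1})$, and the lemma fails. The paper's fix is to observe that whenever $c\neq 0$ is exceptional one necessarily has $|2Krs|\gtrsim 1$, so $|K|\gtrsim (rs)^{-1}$; this allows trading $\langle K\rangle^{-(n+1)}$ for $|rs|^{n+1}$ and recovering a uniform bound of the good form (paying only one $K$-weight, namely $\langle K\rangle^{-\ell-1}$, which is still enough since $\alpha<1$). Without that conversion step your argument does not close, and part (2) inherits the same gap since it reruns part (1).
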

\begin{proof} \emph{(1)} Assume  $\|f_i\|_{X^{\ell+n+2, N}(\RR^n)}=1$.
The bound will be obtained by using the estimate in Proposition \ref{dispersive lemma}. If $c$ is distinct from $(0,0)$, $c\neq (0, [2Krs])$, $([2Krs], 0)$, or $([2Krs],[2Krs])$, there holds
\[
\abs{v\left(s, -\frac{c}{r}\right)} \lesssim r^{n+1} \langle s \rangle \langle rs \rangle^{n+1} \langle K \rangle^{-\ell-n-2}\sum_{i=0}^3 A_i~.
\]
If $c=(0, [2Krs])$ or $([2Krs], 0)$ or $([2Krs],[2Krs])$, but is distinct from zero, then we necessarily have that $|K|>(rs)^{-1}$, in which case we use estimate \eqref{PrecisedvEst-0} to get
\begin{equation*}
\abs{v\left(s, -\frac{c}{r}\right)}\lesssim \langle s \rangle^{-n}\langle K \rangle^{-\ell-n-2}\lesssim \langle s \rangle^{-n} |rs|^{n+1} \langle K \rangle^{-\ell-1}~.
\end{equation*}
Thus, by losing weights, we reach the following estimate for all $c\ne 0$,
\begin{equation}\label{uniform est on u}
\abs{v\left(s, -\frac{c}{r}\right)}\lesssim r^{n+1} \langle s \rangle \langle rs \rangle^{n+1} \langle K \rangle^{-\ell-1}\sum_{i=1}^3A_i~..
\end{equation}
From Corollary \ref{cor:Fh} we have $|\widehat h_\chi(r,s)| \lesssim \langle rs\rangle^{-M}$ for any $M$, thus 
\[
\sum_{c\ne 0} |c|^\alpha |I(r,c)| \lesssim  \langle K \rangle^{-\ell-1}   \int\frac{r^{n+1} \langle s \rangle \langle rs \rangle^{n+1}}{\langle  rs\rangle^M}   \sum_{c\ne 0}  \sum_{i=1}^3 |c|^\alpha A_i \, ds~.
\]
However, it is easy to check that $ \sum_{c\ne 0}  \sum_{i=1}^3 |c|^\alpha A_i  \lesssim  \langle Krs\rangle^\alpha$, consequently
\[
\sum_{c\ne 0} |c|^\alpha |I(r,c)|  \lesssim r^{n-1}\langle K \rangle^{-\ell}~.
\]

\emph{(2)}  Notice that $r\partial_r h(r,y)$ has the same form as that in part {\it (ii)} of Lemma \ref{hummingbird}, as a result the same estimates hold for it's Fourier transform $|\mathcal F_y r\partial_r h (r, s)|\lesssim \langle rs\rangle^{-M}$ for any $M$. Similarly, applying $r\partial_r$ to $u\big(s,\frac{-c}{r}\big)$ has the effect of replacing $u$ by $x.\nabla u$ which satisfies the same type of estimate as \eqref{uniform est on u} assuming even more decay on $f_j$ as stated in the theorem.
\end{proof}

\subsection{Proof of Theorem \ref{th:asymptotics}}

Without loss of generality, we may assume that $\|f_i\|_{X^{\ell+n+2, N}(\RR^n)}=1$ for $j=1,2,3$. Recall that
\begin{equation}\label{eq:splitting of R}
\begin{split}
\sum_{\mathcal R(K)} f_1(K_1)f_2(K_2) f_3(K_3)=& \frac{L^{2n-2}}{C_L} \sum_{q=1}^L \sum_{c\in \Z^{2n}} S(q, c) \frac{1}{q^{2n}} I(r,c)\\
=&\frac{L^{2n-2}}{C_L} \left(\sum_{q=1}^L  S(q, 0) \frac{1}{q^{2n}} I(r,0)+\sum_{q=1}^L \sum_{c\neq 0 \in \Z^{2n}} S(q, c) \frac{1}{q^{2n}} I(r,c)\right)\\
=&\frac{L^{2n-2}}{C_L}(\mathcal{A}+\mathcal{B})~.
\end{split}
\end{equation}

\underline{First case:  $n>2$.}\;   Using successively Lemma~\ref{lem:boundI}, Lemma~\ref{weightedSqcSum}, and Lemma~\ref{c:trivial_sq_est},
\begin{align*}
\mathcal{A}:= &\sum_{q=1}^L S(q,0)\frac{1}{q^{2n}} I(r,0) = \mathcal{I}(0)\sum_{q=1}^L S(q,0)\frac{1}{q^{2n}}  +
O\left( \langle K \rangle^{-\ell}\frac 1L\sum_{q=1}^L S(q,0)\frac{1}{q^{2n-1}}\right)  \\
= & \frac{\zeta\left(n-1\right)}{\zeta\left(n\right)}\mathcal{T}(f_1, f_2, f_3)+
\begin{cases}
O\left(L^{-1}(\log L)  \langle K \rangle^{-\ell} \right) \quad \text{ if } n=3\\
O\left(L^{-1}  \langle K \rangle^{-\ell} \right) \quad \text{ if } n>3
\end{cases}
\end{align*}

Turning to $\mathcal{B}$, we estimate using Lemma \ref{lem:icr}  and Lemma~\ref{c:trivial_sq_est},
\begin{align*}
|\mathcal{B}|\leq \sum_{c \neq 0} \sum_{q=1}^L |S(q,c)| \frac{1}{q^{2n}}\, |I(r,c)| & \lesssim \sum_{q=1}^L q^{-n+1} \sum_{c \neq 0} |I(r,c)|\\
&\lesssim    \sum_{q=1}^L q^{-n+1}\left(\frac{q}{L}\right)^{n-1}\langle K\rangle^{-\ell}\lesssim L^{-n+2} \langle K\rangle^{-\ell}~.
\end{align*}
This finishes the proof in the case $n>2$.

\underline{Second case: $n=2$.}  Again without any loss of generality, we assume that $\|f_j\|_{X^{\ell+6, 9}(\RR^2)}=1$ for $j=1,2,3$. We start  splitting the sum, as in \eqref{eq:splitting of R},  into $\mathcal A$ (corresponding to $c=0$) and $\mathcal B$ (corresponding to $c\ne 0$).  We find the asymptotic of $\mathcal B$ 
\[
\mathcal{B}:=  \sum_{c\neq 0 \in \Z^{4}} \sum_{q=1}^L  S(q, c) \frac{1}{q^{4}} I(r,c)~,
\]
by using Abel's summation  formula \eqref{e:Abel_summation}.  For any $x\in\RR$ we define 
\[
A(x,c) = \sum_{q'\le x} S(q',c),
\]
then 
\[
\mathcal{B} = - \sum_{c\neq 0} \int_1^L  A(x,c) \partial_x\left(\frac 1{x^4} I\left(\frac xL,c\right) \right) dx~,
\]
since $A(1,c)=I(1,c) =0$. By changing variables $x\to Lx$, we get,
\[
\mathcal{B} = \sum_{c\neq 0} \left[ 4\int_{\frac 1L}^1  A(Lx,c) \frac 1{L^4x^5} I(x,c) dx  - \int_{\frac 1L}^1  A(Lx,c) \frac 1{L^4x^4} \partial_xI(x,c) \; dx \right]~.
\]
By Lemma \ref{l:SLC_Omega_0} and Lemma \ref{l:SLC_Omega_not_0} we have
\[
A(q,c) = \sum_{q'\le q} S(q',c) = \frac{\eta(c)q^4}{4\zeta(2)}+O(q^{\frac72+\epsilon}+|c|^{\epsilon}q^{3+\epsilon})~,
\]
where
\[
\eta(c)= \begin{cases}
 1 & \text{ if } \omega(c)=0~, \\
0 & \text{ if } \omega(c)\neq 0~.
\end{cases}
\]
By Lemma \ref{lem:icr} equation \eqref{eq:sumcneq0}, we conclude
\[
4 \sum_{c\neq 0} \int_{\frac 1L}^1  A(Lx,c) \frac 1{L^4x^5} I(x,c) dx = \sum_{c\neq 0}   \frac{\eta(c)}{\zeta(2)}\int_0^1  \frac 1{x} I(x,c) dx  + O(L^{-1/2+} \< K\>^\ell)~.
\]

Similarly,
\[
\int_{\frac 1L}^1  A(Lx,c) \frac 1{L^4x^4} \partial_xI(x,c) \; dx =  O(L^{-1/2+} \< K\>^\ell),
\]
where we used Lemma \ref{lem:icr} \emph{(ii)}. This implies that for $n=2$, we have
\begin{equation}\label{eq:2ndterm}
\mathcal{B}=\sum_{c \neq 0} \sum_{q=1}^\infty S(q,c) \frac{1}{q^4}\, I(r,c) = \sum_{c \neq 0,\omega(c)=0} \frac{1}{\zeta(2)} \int_0^1 \frac 1r I(r,c)\, dr  + O\left (L^{-1/2+}\langle K \rangle^{-\ell}\right)~.
\end{equation}
To bound $\mathcal{A}$, we proceed as in  Heath-Brown \cite{HB}, and split the sum into 
\[
\sum_{q=1}^L S(q,0)\frac{1}{q^4} I(r,0) = \sum_{q=1}^{\rho L} S(q,0)\frac{1}{q^4} I(r,0)  +  \sum_{q=\rho L}^L S(q,0)\frac{1}{q^4} I(r,0)= I + II~,
\]
where  $\rho = L^{-\alpha}$ is to be chosen shortly. To estimate $I$ we write
\[
\sum_{q=1}^{\rho L} S(q,0)\frac{1}{q^4} I(r,0) =  \sum_{q=1}^{\rho L} S(q,0)\frac{1}{q^4} \mathcal{I}(0) +  \sum_{q=1}^{\rho L} S(q,0)\frac{1}{q^4} \left( I(r,0)- \mathcal{I} (0)\right)~,
  \]
 and use equation  \eqref{eq:lnb}, equation \eqref{eq:sqcb}, Lemma \ref{lem:boundI} and \eqref{aim0} to obtain
\begin{equation}\label{eq:firstlog}
 \sum_{q=1}^{\rho L} S(q,0)\frac{1}{q^{4}} I(r,0)  = \left (\frac{\log(\rho L)+\gamma}{\zeta(2)}-\frac{\zeta'(2)}{\zeta(2)^2}
\right) \mathcal{I}(0)+O\left((\rho L)^{-1/2+}\langle K \rangle^{-\ell}+\rho \langle K \rangle^{-\ell}\right)~.
 \end{equation}

To estimate $II$, we use Abel's summation formula again to write
\begin{align*} 
& \sum_{q=\rho L}^L S(q,0)\frac{1}{q^4} I(r,0) \\
& \qquad = -\frac{A(\rho L,0)I(\rho,0)}{(\rho L)^4} +  \int_{\rho }^1 \left(A(Lx,0)\frac{4}{L^4x^{5}} I(x,0) -  
A(Lx,0)\frac{1}{L^4x^{4}} \partial_x I(x,0)\right) \, dx.
\end{align*}

Using the formula for $A(q,0)$ given in Lemma  \ref{l:SLC_Omega_0},  we conclude
\[
\begin{split}
\frac{A(\rho L,0)I(\rho,0)}{(\rho L)^4}=&\frac{1}{4\zeta(2)}\mathcal I(0)+\frac{1}{\zeta(2)}\left( I(\rho, 0)-\mathcal I(0)\right) + O\left((\rho L)^{-\frac12+}\langle K \rangle^{-\ell}\right)\\
=&\frac{1}{4\zeta(2)}\mathcal I(0) + O\left(\rho \langle K \rangle^{-\ell}+(\rho L)^{-\frac12+}\langle K \rangle^{-\ell}\right)~.
\end{split}
\]
Similarly, we have
\[
\begin{split}
\int_{\rho}^1 A(Lx,0)& \frac{4}{L^4 x^{5}} I(x,0) dx  = \frac{1}{\zeta (2)}\int_{\rho }^1 \frac{1}{x} I(x,0)\,dx+\ O \left((\rho L)^{-\frac 12 +} \right)\\
=&-\frac{1}{\zeta (2)}\log \rho \,\mathcal I (0) +\frac{1}{\zeta (2)}\int_{0}^1 \frac{1}{x} \left(I(x,0)-\mathcal I(0)\right)\,dx + O \left((\rho L)^{-\frac 12 +} \<K\>^\ell\right)
\end{split}
\]
and
\[
\int_{\rho }^1 A(Lx,0)\frac{1}{L^4x^{4}} \partial_x I(x,0) \, dx
 =-\frac{1}{4\zeta (2)} \mathcal I(0)+ O\left((\rho L)^{-\frac 12 +} \langle K \rangle^{-\ell}+\rho \langle K \rangle^{-\ell}\right)~.
 \]

Putting all of these terms together we conclude
\begin{equation}\label{eq:secondlog}
\begin{split}
\sum_{q=\rho L}^L S(q,0)\frac{1}{q^4} I(r,0)  = &-\frac{1}{\zeta (2)}\log \rho \,\mathcal I (0) + \frac{1}{\zeta (2)}\int_{0}^1 \frac{1}{x} \left(I(x,0)-\mathcal I(0)\right)\,dx\\
&  \qquad + O\left((\rho L)^{-\frac 12 +} \langle K \rangle^{-\ell}+\rho \langle K \rangle^{-\ell}\right)~.
\end{split}
\end{equation}

Adding equations \eqref{eq:firstlog} and \eqref{eq:secondlog}, and choosing $\rho =L^{-\frac13}$, we get
\begin{multline*}
\mathcal{A}:=\sum_{q=1}^L S(q,0)\frac{1}{q^{4}} I(r,0) =  \left(\frac{\log(L)+\gamma}{\zeta(2)}-\frac{\zeta'(2)}{\zeta(2)^2}
\right) \mathcal{I}(0)  \\
+\frac{1}{\zeta (2)}\int_{0}^\infty \frac{1}{r} \left(I(r,0)-\mathcal I(0)\right)\,dr + O\left(L^{-\frac 13 +}\right)~.
\end{multline*}
Recalling that $\mathcal I(0)=\mathcal T(f_1, f_2, f_3)$, and combining the above equation to \eqref{eq:2ndterm}, we obtain our claim with
\begin{multline}\label{def of C(K)}
\mathcal C(K)=\left(\frac{\gamma}{\zeta(2)}
-\frac{\zeta'(2)}{\zeta(2)^2} \right) \mathcal{I}(0) \\
+  \frac{1}{\zeta(2)}\int_{0 }^1 \frac{1}{r} \left( I(r,0) - \mathcal{I}(0) \right) \,dr +\sum_{c \neq 0,\omega(c)=0} \frac{1}{\zeta(2)} \int_0^1 \frac 1r I(r,c)\, dr~.
\end{multline}

Finally, the boundedness of $\mathcal C$ as stated follows from \eqref{aim0}, Lemma \ref{lem:boundI}, and Lemma \ref{lem:icr}.

\section{Normal Form Transformations and proof of Theorems~\ref{approx thm1} and  \ref{approx thm2} }\label{sec:NF-section}

The proof of both theorems has two main ingredients: {\it 1)}~asymptotics of lattice sums stated in Theorem \ref{th:asymptotics}, and {\it 2)}~normal forms transformation. 
We first explain the latter before proving the theorems.

\subsection{Normal Form Transformation} The normal form transformation can be derived either by using the method of averaging, or by calculating a coordinate change derived from a power series expansion.  Here we elect to use the latter. Let 
\begin{equation}\label{eq:NF}
v= u + \sum_{d=1}^P  \epsilon^{2d} H_{2d+1}(u)~,
\end{equation}
where $H_{2d+1}$ is a 2d+1 multilinear form in $u$ (in odd entries) and $\bar{u}$ (in even entries).

Recall that $u$ solves~\eqref{NLS}; therefore $v$ satisfies
\begin{multline*}
-i\partial_t v + \frac{1}{2\pi}\Delta v = \epsilon^2 |u|^2u +
 \sum_{d=1}^P  \frac{\epsilon^{2d}}{2\pi}\Delta H_{2d+1}(u)\\
+ \sum_{d=1}^P  \epsilon^{2d} \frac{\delta H_{2d+1}}{\delta
  u}\bigg(\frac{-1}{2\pi}\Delta u\bigg)  + \epsilon^{2d+2} \sum_{d=1}^P \frac{\delta H_{2d+1}}{\delta
  u}(|u|^2 u)~.
\end{multline*}
where we used the notation for any function $F$ depending on $u$ and $\bar{u}$
\[
\frac{\delta F}{\delta u}(w) = \frac{\partial F}{\partial u} w +  \frac{\partial F}{\partial \bar u} \bar w~.
\]

Writing $\mathcal{L}H_{2d+1}(u) = \frac{1}{2\pi}\Delta H_{2d+1}(u) - \frac{\delta H_{2d+1}}{\delta
  u} (\frac{1}{2\pi}\Delta u)$, and collecting terms of the same order in $\epsilon$, we conclude that
\begin{align*}
-iv_t + \frac{1}{2\pi}\Delta v = \epsilon^2|u|^2u  +
\epsilon^2\mathcal{L}H_3(u) &
+ \sum_{d=2}^P \epsilon^{2d} \left( \mathcal{L}H_{2d+1}(u) + \frac{\delta H_{2d-1}}{\delta
  u}(|u|^2 u)\right)  \\[.3em]
&  +\ \epsilon^{2(P+1)} \frac{\delta H_{2P+1}}{\delta u} (|u|^2 u)~.
\end{align*}
To express the above equation in terms of the Fourier coefficients in the following manner, recall that
\[
u = \frac{1}{L^n} \sum_{K \in \mathbb{Z}^n_L} u_K e(K\cdot x), \qquad v = \frac{1}{L^n} \sum_{K \in \mathbb{Z}^n_L} v_k e(K\cdot x)~,
\]
and write the multilinear form $H_{2d+1}$  as
\begin{align*}
& H_{2d+1}(u) = \frac{1}{L^n} \sum_{K \in \mathbb{Z}^n_L} H_{2d+1,K}(u) e(K\cdot x) \\
\mbox{where}\quad H_{2d+1,K}(u)= \frac{1}{L^{2nd}}
& \sum_{\mathcal{S}_{2d+1}(K) =0} h_{2d+1}(K, K_1,
  K_2,\dots,K_{2d+1})u_{K_1} \bar{u}_{K_2} \dots u_{K_{2d+1}}~.
\end{align*}
The equation for $v_K$ can then be written as
\begin{multline}\label{eq:v}
-i\partial_t v_K - 2\pi|K|^2v_K =  \frac {\epsilon^2}{L^{2n}}
\sum_{\mathcal{S}_3(K)=0} u_{K_1} \bar{u}_{K_2} u_{K_3}\\
+ \sum_{d=2}^P \epsilon^{2d} \left(\mathcal{L} H_{2d+1,K}(u) +\frac{\delta H_{2d-1,K}}{\delta u} (|u|^2 u)\right)  + \epsilon^{2(P+1)}\frac{\delta H_{2P+1,K}}{\delta u} (|u|^2 u)~,
\end{multline}
where $\mathcal{L} H_{2d+1,K}(u)$ and $\frac{\delta H_{2d-1,K}}{\delta u} (|u|^2 u)$ are given by
\begin{equation*}
\mathcal{L} H_{2d+1,K}(u)= \frac{1}{L^{2nd}}
\sum_{\mathcal{S}_{2d+1}(K)=0}  \hskip -12pt
 2\pi \Omega_{2d+1}(K,K_1,\dots,K_{2d+1})h_{2d+1}(K,K_1,\dots,K_{2d+1})
u_{K_1}\dots u_{K_{2d+1}}~,
\end{equation*}
and
\begin{equation}\label{eq:hk}
\begin{split}
\frac{\delta H_{2d-1,K}}{\delta u} (|u|^2 u)(K) = &
\frac{1}{L^{2nd}} \sum_{\mathcal{S}_{2d-1}(K)=0} h_{2d-1}(K,K_1,\dots,K_{2d-1})\times\\
& \qquad \left(\sum_{N-K_{2d} + K_{2d-1} = K_1} u_N
\bar{u}_{K_{2d}}u_{K_{2d+1}}\right) \times \bar{u}_{K_2}u_{K_3}\dots u_{K_{2d-1}}\\[.5em]
& + \frac{1}{L^{2nd}} \sum_{\mathcal{S}_{2d-1}(K)=0}  h_{2d-1}(K,K_1,\dots,K_{2d-1}) \times\\
& \qquad \left(\sum_{N-K_{2d} + K_{2d-1} = K_2} \bar{u}_N
u_{K_{2d}} \bar{u}_{K_{2d+1}}\right) \times u_{K_1}u_{K_3}\dots u_{K_{2d-1}}\\[.5em]
&+ \dots 
\end{split}
\end{equation}
The normal form transformation is determined by choosing $H_{2d+1}$ to eliminate non-resonant terms of degree $2d+1$,  i.e., those for which the resonance modulus $\Omega$ does not vanish. This leads to the choice
\begin{align*}
H_{3,K}(u)  = & \frac{-1}{L^{2n}} \sum_{\substack{\mathcal{S}_3
  (K)=0 \\ \Omega_3(K) \neq 0}}\frac{1}{2\pi \Omega_3(K,K_1,K_2,K_3)} u_{K_1}\bar{u}_{K_2} u_{K_3}  \\
 H_{2d+1,K}(u)  = &\frac{-1}{L^{2nd}} \sum_{\mathcal{S}_{2d-1}(K)=0}h_{2d-1}(K,K_1,\dots,K_{2d-1})  \times  \\
& \hskip -3pt
\left(\sum_{\substack{N-K_{2d}+K_{2d+1} = K_1 \\ \Omega_{2d+1} \neq 0}}\frac{1}{2\pi \Omega_{2d+1}(K,N,K_2,\dots,K_{2d+1})} u_N \bar{u}_{K_{2d}}u_{K_{2d+1}}\right)  \bar{u}_{K_2}u_{K_3}\dots u_{2d-1}\\
& +\dots,
\end{align*}
which defines iteratively $H_{2d+1}$. With this choice for $H_{2d+1}$, equation \eqref{eq:v} becomes
\begin{multline}\label{eq:vnf}
-i\partial_t v_K - 2\pi|K|^2v_K =  \frac{\epsilon^2}{L^{2n}}
\sum_{\substack{\mathcal{S}_3(K)=0\\ \Omega_3 = 0}} u_{K_1} \bar{u}_{K_2} u_{K_3} + \sum_{d=2}^P \epsilon^{2d}
\sum_{\substack{\mathcal{S}_{2d+1}(K)=0 \\ \Omega_{2d+1}=0}}
\frac{\delta H_{2d-1,K}}{\delta u} (|u|^2 u) \\
+ \epsilon^{2(P+1)} \frac{\delta H_{2P+1,K}}{\delta u} (|u|^2 u)~,
\end{multline}
where we abused notations slightly be denoting $\sum_{\substack{\mathcal{S}_{2d+1}(K)=0 \\ \Omega_{2d+1}=0}}$ the sum restricted to the Fourier modes satisfying the resonance condition.

Finally, the boundedness properties of the normal form transformation~\eqref{eq:NF} will be needed.

\begin{lemma} \label{boundNF} For any $d\leq P$,
$$
\| H_{2d+1} (u) \|_{X^\ell} \lesssim L^+ \| u \|_{X^\ell}^{2d+1}~.
$$
As a result, for $v$ given by~\eqref{eq:NF},
$$
\| v - u \|_{X^\ell} \lesssim L^+ \sum_{d=1}^P \epsilon^{2d} \| u \|_{X^\ell}^{2d+1}~.
$$
\end{lemma}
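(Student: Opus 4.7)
The second assertion follows from the first by the triangle inequality applied to $v-u = \sum_{d=1}^P \epsilon^{2d} H_{2d+1}(u)$, so the plan is to establish the first bound by induction on $d$.

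\emph{Base case $d=1$.} Using the pointwise bound $|u_K| \leq \langle K\rangle^{-\ell}\|u\|_{X^\ell}$, it suffices to show, uniformly in $K\in\mathbb{Z}^n_L$, that
\begin{equation*}
\frac{\langle K\rangle^\ell}{L^{2n}}\sum_{\substack{\mathcal{S}_3(K)=0\\ \Omega_3\neq 0}}\frac{\langle K_1\rangle^{-\ell}\langle K_2\rangle^{-\ell}\langle K_3\rangle^{-\ell}}{|\Omega_3|} \lesssim L^{+}.
\end{equation*}
The plan is to split according to $|\Omega_3|>1$ versus $|\Omega_3|\leq 1$. On the first region one simply bounds $|\Omega_3|^{-1}\leq 1$ and uses the standard convolution estimate $\frac{1}{L^{2n}}\sum_{\mathcal{S}_3=0}\prod_i \langle K_i\rangle^{-\ell} \lesssim \langle K\rangle^{-\ell}$, valid because $\ell>n$. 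On the second region, since $\Omega_3$ takes values in $\frac{1}{L^2}\mathbb{Z}$, one performs a dyadic decomposition $|\Omega_3|\sim 2^k L^{-2}$ with $k=0,\dots,O(\log L)$; in each shell there are $O(2^k)$ values of $\mu=\Omega_3$, and Theorem \ref{th:disbnd} gives $\sum_{\mathcal{R}_\mu(K)}\prod_i\langle K_i\rangle^{-\ell}\lesssim L^{2n-2}\langle K\rangle^{-\ell}$ (with an extra $\log L$ when $n=2$). Each dyadic shell thus contributes $L^{2n}\langle K\rangle^{-\ell}$, and summing $O(\log L)$ shells and dividing by $L^{2n}$ yields a bound of size $\log L \lesssim L^{+}$.

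\emph{Inductive step.} From the recurrence defining $h_{2d+1}$ in terms of $h_{2d-1}$ and $(2\pi\Omega_{2d+1})^{-1}$, the coefficient $h_{2d+1}$ carries one additional small-divisor factor $|\Omega_{2d+1}|^{-1}$ compared to $h_{2d-1}$, while the sum is now a $(2d+1)$-linear form in $u$ with $S_{2d+1}=0$ and $\Omega_{2d+1}\neq 0$. Plugging in $|u_{K_i}|\leq \langle K_i\rangle^{-\ell}\|u\|_{X^\ell}$ reduces the problem to estimating
\begin{equation*}
\frac{\langle K\rangle^\ell}{L^{2nd}}\sum_{\substack{\mathcal{S}_{2d+1}(K)=0\\\Omega_{2d+1}\neq 0}}\frac{|h_{2d-1}(\ldots)|\prod_i \langle K_i\rangle^{-\ell}}{|\Omega_{2d+1}|}.
\end{equation*}
I would freeze the two extra indices arising from the outermost $|u|^2 u$ substitution and apply the inductive hypothesis, viewed as a bound in $X^\ell$ on the remaining $(2d-1)$-linear block, thereby reducing to a sum of exactly the same shape as in the base case but with one of the three ``input'' weights replaced by the $X^\ell$-norm of $H_{2d-1}(u)$. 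The same splitting $|\Omega_{2d+1}|>1$ versus $|\Omega_{2d+1}|\leq 1$, together with the version of Theorem \ref{th:disbnd} that only uses $X^\ell$-decay of its three inputs (the proof in Section \ref{sec:res-bound} never uses more than this), then furnishes the bound with constant $\lesssim L^{+}\cdot L^{+}=L^{+}$.

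\emph{Main obstacle.} The hard part is to set up the induction so that the dyadic control of the new divisor $\Omega_{2d+1}$ cleanly combines with the inductive control on $H_{2d-1}$: one must ensure that the Riemann-sum/convolution bound $\sum_{\mathcal{S}=0}\prod\langle K_i\rangle^{-\ell}\lesssim L^{2n}\langle K\rangle^{-\ell}$ and the dyadic use of Theorem \ref{th:disbnd} both still yield the required $\langle K\rangle^{-\ell}$ decay for the outer variable $K$ after $d$ iterations. Since $d\leq P$ is finite, the accumulation of $L^{+}$ factors is harmless, but the bookkeeping of the $K$-dependent weights through the nested small divisors is the delicate point.
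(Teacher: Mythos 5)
Your overall strategy — reducing to a uniform bound on the weighted lattice sum, handling $d=1$ by splitting on $|\Omega_3|$ and invoking Theorem~\ref{th:disbnd} for the near-resonant piece, then inducting through the recursive definition of $h_{2d+1}$ — is exactly the paper's. Your base case is fine: the paper simply uses a cruder threshold at $|\Omega_3|\sim L^{10n-2}$ instead of your dyadic shells and your cutoff at $|\Omega_3|\sim 1$, but the mechanism (trivially bound the far piece by a convolution estimate; use the near-resonant count from Theorem~\ref{th:disbnd}, applied for each value of $\mu=\Omega_3$, to absorb the small divisor up to $L^+$) is the same.

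The place where your inductive step is out of order is the following, and it is precisely the ``delicate point'' you flag at the end. You propose to \emph{first} apply the inductive hypothesis to the $(2d-1)$-linear block in $h_{2d-1}$, and \emph{then} run a base-case argument on the remaining trilinear sum. But the new small divisor $\Omega_{2d+1}$ is not a function of the inner indices $N,K_{2d},K_{2d+1}$ alone: with $K_1=N-K_{2d}+K_{2d+1}$ one has $\Omega_{2d+1}=\Omega_3(K_1,N,K_{2d},K_{2d+1})+\Omega_{2d-1}(K,K_1,\dots,K_{2d-1})$, so the divisor couples the inner trilinear sum to the outer $(2d-1)$-linear one through the shift $\mu:=\Omega_{2d-1}$. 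Once the inductive hypothesis is applied to the outer block, that $\mu$-dependence can no longer be tracked, so there is no clean ``3-linear sum of base-case shape'' left to estimate. The paper performs the two steps in the opposite order: it first bounds the inner trilinear sum
\[
\sum_{\substack{N-K_{2d}+K_{2d+1}=K_1\\ \Omega_3+\mu\neq 0}}\frac{\langle N\rangle^{-\ell}\langle K_{2d}\rangle^{-\ell}\langle K_{2d+1}\rangle^{-\ell}}{|\Omega_3+\mu|}
\lesssim L^{2n}\,L^{+}\,\langle K_1\rangle^{-\ell},
\]
crucially \emph{uniformly in the shift $\mu$}, which is possible exactly because Theorem~\ref{th:disbnd} is stated and proved with the supremum over $\mu$ built in. Only then is the inductive hypothesis applied to the remaining $(2d-1)$-linear sum in $h_{2d-1}$ with the extra factor $\langle K_1\rangle^{-\ell}$. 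If you swap your two steps and invoke the $\mu$-uniformity of Theorem~\ref{th:disbnd} (which you already rely on implicitly when splitting $|\Omega_{2d+1}|$ dyadically), your induction closes; as written, the order of operations leaves a gap.
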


\begin{proof} It suffices to show that
$$
\frac{1}{L^{2nd}} \sum_{\mathcal{S}_{2d+1}(K)=0} |h_{2d+1}(K,K_1,\dots,K_{2d+1})| \langle K_1 \rangle^{-\ell} \dots \langle K_{2d+1} \rangle^{-\ell} \lesssim L^+ \langle K \rangle^{-\ell}~.
$$
Start with $d=1$, for which $h_3 = \frac{1}{\Omega_3}$. We bound the sum by writing  $\Omega_{3} = \frac{\mu}{L^2}$ and  splitting the sum over $|\mu| \leq L^{10n}$ and  $|\mu| > L^{10n}$. 
For $|\mu| > L^{10n}$, the sum can be bounded directly by
$$
\frac{1}{L^{2n}} \sum_{\substack{K_1-K_2+K_3=K \\ |\Omega_3| > L^{10n-2}}} \frac{1}{L^{10n-2}} \langle K_1 \rangle^{-\ell}\langle K_2 \rangle^{-\ell}\langle K_3 \rangle^{-\ell} \lesssim \frac{\langle K \rangle^{-\ell}}{L^{8n-2}}~.
$$
For $|\mu| \leq L^{10n}$, we use Theorem \ref{th:disbnd} to bound
\[
\frac{1}{L^{2n}}\sum_{1 \leq |\mu| \leq L^{10n}} \frac{L^2}{\mu} \sum_{\substack{K_1-K_2+K_3=K \\ \Omega_3 = \mu}}\langle K_1 \rangle ^{-\ell}\langle K_{2}\rangle^{-\ell}\langle K_{3}\rangle^{-\ell} \lesssim \frac{1}{L^{2n}} L^{2+} Z_n(L) \langle K \rangle^{-\ell} \lesssim L^{+} \langle K \rangle^{-\ell}~.
\]
Turning to $d \geq 2$, we use the recursive definition to write
\begin{align*}
& \frac{1}{L^{2nd}} \sum_{\mathcal{S}_{2d+1}(K)=0} |h_{2d+1}(K,K_1,\dots,K_{2d+1})| \langle K_1 \rangle^{-\ell} \dots \langle K_{2d+1} \rangle^{-\ell}  \lesssim \\
& \frac{1}{L^{2nd}}\hskip -2pt
 \sum_{\mathcal{S}_{2d-1}(K)=0} |h_{2d-1}|  
\left(\sum_{\substack{N-K_{2d}+K_{2d+1} = K_1 \\ \Omega_{2d+1} \neq 0}}
\frac{1}{\left |\Omega_{2d+1}\right |} \langle N \rangle^{-\ell}\langle K_{2d} \rangle^{-\ell}\langle K_{2d+1} \rangle^{-\ell}\right) \langle K_2 \rangle^{-\ell} \dots \langle K_{2d-1} \rangle^{-\ell}~.
\end{align*}
The inner sum can be bounded as above, leading to the desired estimate:
$$
\dots \lesssim \frac{L^+}{L^{n(2d-2)}} \sum_{\mathcal{S}_{2d-1}(K)=0} |h_{2d-1}| \langle K_2 \rangle^{-\ell} \dots \langle K_{2d+1} \rangle^{-\ell}\langle K_1 \rangle^{-\ell} \dots \langle K_{2d-1} \rangle^{-\ell} \lesssim L^+ \langle K \rangle^{-\ell}~,
$$
where the last inequality follows by the bound at the rank $d-1$.
\end{proof}

\subsection*{Proofs of Theorems  \ref{approx thm1}  and  \ref{approx thm2}}

The idea of the proof is to compare solutions of the \eqref{NLS} and the \eqref{CR} equations using
the transformed equation  \eqref{eq:v}  and Lemma~\ref{boundNF}.  
\endproof
\underline{Proof of  Theorem  \ref{approx thm1} }
Let
\[
a_K(t) =u_K(t) e(-|K|^2 t)~\mbox{and}~  \qquad b_K(t) = v_K(t) e(-|K|^2 t)~.
\]
Then, from Lemma~\ref{boundNF} we have
\[
\begin{split}
\left\| a_K(t) - g\left(\frac{t}{T_R},K\right)\right\|_{X^\ell} \le &  \left\| b_K(t) - g\left(\frac{t}{T_R},K\right)\right\|_{X^\ell}  + \| a_K(t) -  b_K(t) \|_{X^\ell} \\
 \le &  \left\| b_K(t) - g\left(\frac{t}{T_R},K\right)\right\|_{X^\ell} + C_\gamma\left(\sum_{d=1}^P
\epsilon^{2d}   \|u\|^{2d+1}_{X^{\ell}} \right)  L^{\gamma}~,
\end{split}
\]
where from the hypothesis of the theorem
\begin{equation*}
\begin{split}
&-i\partial_t g = \mathcal T(g, g, g)\\
&\sup_{0\leq t \leq M} \|g(t)\|_{X^{\ell+n+2, N}(\mathbb{R}^n)}\leq B~,
\end{split}
\end{equation*}
and   $T_R =\frac{L^{2n}}{\epsilon^2 Z_n(L)}$.  Initially $a_K(0) = g(0,K)$, and we will show by a bootstrap argument that $\|a_K(t)\|_{X^\ell} \le 2B$ on the time interval $[0,MT_R]$ stated in the theorem. Consequently, we can assume that
\begin{equation}\label{eq:bst}
\left\| a_K(t) - g\left(\frac{t}{T_R},K\right)\right\|_{X^\ell} \le   \left\| b_K(t) - g\left(\frac{t}{T_R},K\right)\right\|_{X^\ell} + C_\gamma\left(\sum_{d=1}^P
\epsilon^{2d}  B^{2d+1}\right)  L^{\gamma}.
\end{equation}
To bound $w_K := b_K(t) - g(\frac{t}{T_R},K)$, we write equation \eqref{eq:vnf} in terms of $b_K(t) $
\begin{align*}
-i\partial_t b_K  = & \frac{\epsilon^2}{L^{2n}}
\sum_{\substack{\mathcal{S}_3(K)=0\\ \Omega_3 = 0}} a_{K_1} \bar{a}_{K_2} a_{K_3} + \sum_{d=2}^P \epsilon^{2d}
\sum_{\mathcal{S}_{2d+1}(K)=0}
\frac{\delta \tilde{H}_{2d-1,K}}{\delta u} (|a|^2 a) \\
&+ \epsilon^{2(P+1)} e(-K^2t)\frac{\delta H_{2P+1,K}}{\delta u} (|u|^2 u) ,
\end{align*}
where we used the fact that $\Omega_3= \Omega_{2d+1}=0$ and we wrote
\[
\begin{split}
\frac{\delta \tilde{H}_{2d-1,K}}{\delta u} (|a|^2 a)(K) =&
\frac{1}{L^{2nd}} \sum_{\mathcal{S}_{2d-1}(K)=0} h_{2d-1}\times\\
&\left(\sum_{\substack{N-K_{2d} + K_{2d+1} = K_1\\ \Omega_{2d+1}=0} }a_N\bar{a}_{K_{2d}} a_{K_{2d+1}}\right) \times \bar{a}_{K_2}\dots b_{K_{2d-1}}\\[.5em]
&+ \dots.
\end{split}
\]
This implies that
\begin{equation}\label{eq:w123}
\begin{split}
-i\partial_t w_K  = & \frac{\epsilon^2}{L^{2n}}
\left( \sum_{\substack{\mathcal{S}_3(K)=0\\ \Omega_3 = 0}} a_{K_1} \bar{a}_{K_2} a_{K_3} - Z_n(L) \mathcal{T}(g)_K\right) \\
& + \sum_{d=2}^P \epsilon^{2d}
\sum_{\mathcal{S}_{2d+1}(K)=0}
\frac{\delta \tilde{H}_{2d-1,K}}{\delta u} (|a|^2 a) + \epsilon^{2(P+1)} e(-K^2t)\frac{\delta H_{2P+1,K}}{\delta u} (|u|^2 u) \\
&:= I + II + III~.
\end{split}
\end{equation}

\underline{Bound on $I$}  
\[
\begin{split}
I= & \frac{\epsilon^2}{L^{2n}}
\ \sum_{\substack{\mathcal{S}_3(K)=0\\ \Omega_3 = 0}} \left(a_{K_1} \bar{a}_{K_2} a_{K_3} - g(K_1)\bar{g}(K_2)g(K_3) \right)\\
& +   \frac{\epsilon^2}{L^{2n}}\left( \sum_{\substack{\mathcal{S}_3(K)=0\\ \Omega_3 = 0}}g(K_1)\bar{g}(K_2)g(K_3)
- Z_n(L) \mathcal{T}(g)\right) 
=I_1 + I_2~.
\end{split}
\]

By Theorem \ref{th:disbnd},
\[
\| I_1\|_{X^\ell} \le C_0  \frac{\epsilon^2 Z_n(L)}{L^{2n}}B^2 \left\| a_K(t) - g\left(\frac{t}{T_R},K\right)\right\|_{X^\ell} ,
\]
and by Theorem \ref{th:asymptotics},
\[
I_2 \le CB^3\frac{\epsilon^2 Z_n(L)}{L^{2n}} \delta(L) ;  \qquad \text{where} \qquad \delta(L):=
\begin{cases} L^{-1+} \qquad &\text{ if }n\geq 3\\
(\log L )^{-1} \qquad &\text{ if } n=2
\end{cases}
\]

\underline{Bound on $II$}  

By Theorem  \ref{th:disbnd}  we have
\[
\left |\frac{1}{L^{2n}}\sum_{\substack{N-K_{2d} + K_{2d+1} = K_1\\ \Omega_{2d+1}=0} }a_N\bar{a}_{K_{2d}} a_{K_{2d+1}}\right | \lesssim \frac{Z_n(L)}{L^{2n}} B^3\langle K_1\rangle^{-\ell}~,
\]
for any $d$, and consequently from Lemma~\ref{boundNF} we deduce
\[
\left\|\frac{\delta \bar{H}_{2d-1,K}}{\delta u} (|a|^2 a)(K)\right\|_{X^{\ell}} \lesssim B^{2d+1}  \frac{Z_n(L)}{L^{2n} }L^\gamma~,
\]
and 
\[
\|II\|_{X^\ell} \le C_\gamma \left(\sum_{d=2}^P \epsilon^{2d-4}B^{2d+1} \right)  \frac{\epsilon^2 Z_n(L)}{L^{2n} } \epsilon^2 L^\gamma~.
\]
\underline{Bound on $III$}  
This term can be bounded directly using Lemma \ref{boundNF} 
\[
\| III\|_{X^\ell} \le C_\gamma B^{2(P+1)} \epsilon^{2(P+1)} L^\gamma.
\]

Integrating \eqref{eq:w123} and using \eqref{eq:bst} we conclude 
\begin{multline*}
\left\| a_K(t) - g\left(\frac{t}{T_R},K\right)\right\|_{X^\ell}
 - C_{\gamma,B} \epsilon^2  L^{\gamma}
 \le\int_0^t  \left\{ C_0  \frac{\epsilon^2 Z_n(L)}{L^{2n}}B^2 \| a_K(s) - g(\frac{s}{T_R},K)\|_{X^\ell} \right. \\
 \left. +  C_{\gamma,B}  \frac{\epsilon^2 Z_n(L)}{L^{2n}} \delta(L) +  C_{\gamma,B}     \frac{\epsilon^2 Z_n(L)}{L^{2n} } \epsilon^2 L^\gamma+ C_{\gamma,B}  \epsilon^{2P+1} L^{\gamma} \right\} ds.  
\end{multline*}
From Gronwall's inequality, and $0\le t \le T_R M$,  we obtain,
\[
\left\| a_K(t) - g\left(\frac{t}{T_R},K\right)\right\|_{X^\ell} \le C_{\gamma,B} \left(  \epsilon^2  L^{\gamma} + \delta(L) M +  \epsilon^2 L^\gamma M
+  \epsilon^{2P+1} L^{2 + \gamma}M \right) e^{C_0B^2 M}
\]

and thus by choosing $L$ large, $\epsilon^2L^\gamma$ small, and $P$ large, we conclude
\[
\sup_{0\le t \le T_R\delta_0} \left\| a_K(t) - g\left(\frac{t}{T_R},K\right)\right\|_{X^\ell}  \le C_{\gamma,B, M} \left(  \epsilon^2  L^{\gamma} + \delta(L) \right) \le   \frac{B}{2},
\]
ensuring that $\|a_K(t)\|_{X^\ell} \le 2B$,  which completes the bootstrap argument and the proof of the theorem.
\endproof

\underline{Proof of Theorem \ref{approx thm2}.}
The proof follows from the same argument in Theorem   \ref{approx thm1}.  One only needs to replace the term $I$ in  \eqref{eq:w123}  by $\widetilde \Delta$ defined in \eqref{eq: def of tilde Delta} and using \eqref{eq: Delta bound2}. 

\section{The general case $p \in \mathbb{N}$}
\label{section_general_p}
The proof for the general problem 
\[
\tag{NLS}
- i \partial_t v + \frac{1}{2\pi} \Delta v =\epsilon^{2p}|v|^{2p} v \qquad   (t,x) \in \mathbb{R} \times \mathbb{T}^n_L ,  
\]
proceeds in exactly the same manner as the case $p=1$.  We start by writing the equation for the Fourier coefficients $a_K(t) = e(-|K|^2 t) \widehat{v}_K(t)$,

\begin{align*}
- i \partial_t a_K =& \frac{\epsilon^{2p}}{L^{2pn}}\sum_{\mathcal{S}_{2p+1}(K) =0}  
a_{K_1} \overline{a_{K_2}} a_{K_3} \dots a_{K_{2p+1}}e(2\pi\Omega_{2p+1}(K)t)\\
 =& \underbrace{\frac{\epsilon^{2p}}{L^{2pn}}\sum_{\substack{\mathcal{S}_{2p+1}(K) =0\\ \Omega_{2p+1}(K)=0}} 
a_{K_1} \overline{a_{K_2}} a_{K_3} \dots a_{K_{2p+1}}}_{\mbox{resonant interactions}} \\[.5em]
& + \underbrace{\frac{\epsilon^{2p}}{L^{2pn}}\sum_{\substack{\mathcal{S}_{2p+1}(K) =0 \\ \Omega_{2p+1}(K)\neq 0}} 
a_{K_1} \overline{a_{K_2}} a_{K_3} \dots a_{K_{2p+1}} e(\Omega_{2p+1}(K)t)}_{\mbox{non-resonant interactions}},
\end{align*}
where,
\begin{align*}
& \mathcal{S}_{2p+1}(K) = K_1 - K_2 + K_3 \dots +K_{2p+1} - K~,\\
& \Omega_{2p+1}(K) = |K_1|^2 - |K_2|^2 + |K_3|^2  \dots + |K_{2p+1}|^2 - |K|^2~. 
\end{align*}
The normal form transformation proceeds in an identical manner, leading us to consider the resonant system
\[
- i \partial_t a_K = \frac{\epsilon^{2p}}{L^{2pn}}\sum_{\substack{\mathcal{S}_{2p+1}(K) =0\\ \Omega_{2p+1}(K)=0}} 
a_{K_1} \overline{a_{K_2}} a_{K_3} \dots a_{K_{2p+1}}~.
\]
Thus  we only need to compute the asymptotics of the lattice sum
\[
\sum_{\substack{\mathcal{S}_{2p+1}(K) =0\\ \Omega_{2p+1}(K)=0}} W_{2p+1}(K)  \overset{def}{=} \sum_{\substack{\mathcal{S}_{2p+1}(K) =0\\ \Omega_{2p+1}(K)=0}} 
a_{K_1} \overline{a_{K_2}} a_{K_3} \dots a_{K_{2p+1}}~.
\]
The resonant set  ($\mathcal{S}_{2p+1}(K) =0$,   $\Omega_{2p+1}(K)=0$  ), when $K_i\in \mathbb{Z}_L^n$, is identical to the case when $p=1$ and $K_i\in \mathbb{Z}_L^{pn}$.  This can be seen by  {\it 1)}~translating $K_i\to K_i + K$; {\it 2)}~writing the resonant set as
\begin{multline*}
 |K_1|^2 - |K_2|^2 + |K_3|^2  \dots -|K_{2p}|^2+ | K_1 - K_2 + K_3 \dots -K_{2p}|^2 =\\
 2(K_1-K_2)\cdot \left(K_1 +  K_3 \dots -K_{2p}\right) + |K_3|^2  \dots -|K_{2p}|^2+ | K_3 \dots -K_{2p}|^2  =\\
 -2J_1\cdot J_2 + |K_3|^2  \dots -|K_{2p}|^2+ | K_3 \dots -K_{2p}|^2  =0~,
 \end{multline*}
where $J_1= K_1 +  K_3 \dots -K_{2p}$ and $J_2 = K_2-K_1$;  {\it 3)}~ repeating the expansion and setting 
\begin{equation}\label{eq:coordp}
\begin{split}
&J_{2i-1}=  K_{2i-1} + K_{2i+1} -K_{2i+2}\dots -K_{2p}\\[.3em]
&J_{2i}= K_{2i} - K_{2i-1}~,
\end{split}
\end{equation}
with $J_{2p-1}= K_{2p-1}$; consequently  the resonant set is given by
\[
\boldsymbol{J_e\cdot J_o} = J_1\cdot J_2 + \dots J_{2p-1}\cdot  J_{2p} = 0~,
\]
where $(\boldsymbol{J_e}, \boldsymbol{J_o}) \in \mathbb{Z}_L^{2np}$.

Now we turn to the asymptotics of the lattice sum.  We need to verify that the answers derived from the circle method are uniform in $K$, and in fact decay like $\<K\>^{-\ell}$.  This can be accomplished by noting two things.  First, as in the case when $p=1$, we have, 
\[
 \mathcal{S}_{2p+1}(K) = K_1 - K_2 + K_3 \dots +K_{2p+1} - K=0~,
 \]
 which implies $|K_{i_0}|\ge \frac{|K|}{4p}$,  for some $i_0$, and therefore,
 \[
 |W_{2p+1}(K)| \lesssim \<K\>^{-\ell} \prod_{\substack{j=1\\ j\ne i_0}}^{2p+1}\<K_j\>^{-\ell}~.
 \]
Second, to verify that the proofs of the lemmas, propositions, and theorems in Sections \ref{sec:res-bound} and \ref{sec:asymp} work for any $p\in \mathbb{N}$, we have to show that the integration by parts argument does not lead to growth in powers of $K$.  To this end we note that to prove the version of Proposition \ref{dispersive lemma} in the general case, we proceed as follows. When
$|K_{i_0}|\ge \frac{|K|}{4p}$, we substitute  
\[
(-1)^{i_0} K_{i_0} = \sum_{\substack{i=1\\ i\ne i_0}}^{2p+1} (-1)^{i-1} K_i - K~.
\]
in the expression of $\Omega_{2p+1}(K)$ and write
\[
\Omega_{2p+1}(K)= \omega_{i_0} + 2 K\cdot \left( \sum_{\substack{i=1\\ i\ne i_0}}^{2p+1} (-1)^{i} K_i\right) - \left(1 + (-1)^{i_0}  \right)|K|^2~,
\]
where \underline{$\omega_{i_0}$ does not depend on either $K_{i_0}$ or $K$}.  Consequently as in the case when $p=1$, we can integrate by parts on the middle term $2 K\cdot \left( \sum_{\substack{i=1\\ i\ne i_0}}^{2p+1} (-1)^{i} K_i\right)$,  without introducing powers of $K$, provided we exclude the cases when one of $c_i = 2Krs$.  These observations are all that is needed to prove the following theorem:
\begin{theorem}
Let $f_i\in X^{\ell,N}$, and  denote by  $z= (\boldsymbol{J_e}, \boldsymbol{J_o})$, where $J_i$ are given by \eqref{eq:coordp}.  Denote by
\begin{equation*}
W(z) = f_1( K + K_1) \bar{f}_2(K +K_2)\dots f_{2p+1}( K + K_{2p+1})~,
\end{equation*}
 where the $K_i$ are considered as functions of $J_i$ by inverting \eqref{eq:coordp}, and  
\begin{equation*}
\mathcal P(W)(K)=\int_{\mathbb{R}^{2pn+1}} \delta(\Omega_{2p+1}(K))  \delta(\mathcal S_{2p+1}(K)) W(z) dK_1dK_2\dots dK_{2p+1}~,
\end{equation*} 
and set
\[
Z_{pn}(L) = \left\{
\begin{array}{ll}
\frac{1}{\zeta(2)} L^2 \log L & \mbox{if $pn=2$} \\
\frac{\zeta(pn-1)}{\zeta(pn)} L^{2pn-2} & \mbox{if $pn \ne 2$}
\end{array} \right.
\]
\begin{enumerate}
\item[1)] For $pn\ne 2$, define 
\begin{align*}
&\Delta(W) =\frac{1}{Z_n(L)}\sum_{\substack{z\in \mathbb{Z}_L^{2n}\\ \omega(z) =0}} W(z) - \mathcal T(W)~.
\end{align*}
If $f_j \in X^{\ell+n+2, 4n+2}(\RR^n)$ for $j=1,2,3$, then
\begin{equation*}
\|\Delta(W)\|_{X^\ell} \lesssim L^{-1+}\prod_{i=1}^{2p+1} \|f_i\|_{X^{\ell+n+2, 4n+2}(\RR^n)}~.
\end{equation*}

\item[2)] For $pn=2$, define
\begin{align*}
\widetilde \Delta(W) =\frac{1}{Z_2(L) }  \sum_{\boldsymbol{J_o} \cdot \boldsymbol{J_e}= 0}W(z) - \left(\mathcal P(W)  + \frac{\zeta(2)}{\log L} \mathcal C(W)\right)~,
\end{align*}
where $\mathcal C(W)$ is a correction operator that is independent of $L$ and is defined explicitly in \eqref{def of C(K)}.

If $f_j \in X^{\ell+n+4, 4n+3}(\RR^n)$, then

\begin{equation*}
\begin{split}
&\| \mathcal C(W)\|_{X^\ell} \lesssim \prod_{i=1}^{2p+1} \|f_i\|_{X^{\ell+n+4, 4n+3}(\RR^n)}~,\\
&\|\widetilde \Delta(W)\|_{X^\ell} \lesssim L^{-1/3+}\prod_{i=1}^{2p+1} \|f_i\|_{X^{\ell+n+4, 4n+3}(\RR^n)} ~.
\end{split}
\end{equation*}
\end{enumerate}
\end{theorem}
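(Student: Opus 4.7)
The plan is to reduce the general $p$ case to the $p=1$ framework of Sections \ref{sec:res-bound} and \ref{sec:asymp}, but with the ambient dimension $d=2n$ replaced by $d=2pn$. The coordinate change \eqref{eq:coordp} already identifies the resonant set
\[
\{(K_1,\dots,K_{2p+1}) : \mathcal{S}_{2p+1}(K)=0,\ \Omega_{2p+1}(K)=0\}
\]
with the hyperbolic quadric $\{z=(\boldsymbol{J_e},\boldsymbol{J_o})\in \mathbb{Z}_L^{2pn}:\omega(z)\overset{def}{=}-2\boldsymbol{J_e}\cdot\boldsymbol{J_o}=0\}$, whose quadratic form has exactly the same structure as the $p=1$ case. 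Consequently Theorem \ref{th:cm} applies verbatim in dimension $d=2pn$, yielding
\[
\sum_{\substack{z\in \mathbb{Z}_L^{2pn}\\ \omega(z)=0}} W(z) = \frac{L^{2pn-2}}{C_L}\sum_{q=1}^{L}\sum_{c\in\mathbb{Z}^{2pn}} S(q,c)\,\frac{1}{q^{2pn}}\,I(r,c),
\]
with $S$ and $I$ defined as in \eqref{SLC}, \eqref{ICR}. The arithmetic analysis from Section 3 carries over without modification: Lemmas \ref{c:trivial_sq_est}, \ref{l:mult_sqc}, \ref{l:sp_t}, \ref{weightedSqcSum}, and \ref{l:SLC_Omega_0}--\ref{l:SLC_Omega_not_0} all depend only on the algebraic form of $\omega$ and hold with $d$ replaced by $2pn$; this is the source of the factor $\zeta(pn-1)/\zeta(pn)$ when $pn\geq 3$ and of the logarithmic correction when $pn=2$.

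The main obstacle will be proving the analogues of Proposition \ref{dispersive lemma} and Lemma \ref{lem:icr} while preserving the pointwise decay $\langle K\rangle^{-\ell}$ in $K$. The difficulty, as flagged by the authors in the remark preceding the theorem, is that the weight $W(z)$ depends nontrivially on $K$ through $(2p+1)$ frequencies, and a naive integration by parts in the phase $e(s\Omega_{2p+1}(z))$ would produce powers of $K$ from $\nabla_z \Omega_{2p+1}$. To avoid this, I would introduce a partition of unity $\chi_0+\chi_1+\cdots+\chi_{2p+1}=1$ where $\chi_{i_0}$ localizes to $|K_{i_0}|\gtrsim |K|/(4p)$ (such an $i_0$ exists thanks to $\mathcal{S}_{2p+1}(K)=0$), and on each piece substitute
\[
(-1)^{i_0}K_{i_0}=\sum_{i\neq i_0}(-1)^{i-1}K_i-K
\]
into $\Omega_{2p+1}$ to obtain the decomposition $\Omega_{2p+1}(K)=\omega_{i_0}(z)+2K\cdot\Lambda_{i_0}(z)-(1+(-1)^{i_0})|K|^2$, in which $\omega_{i_0}$ is independent of both $K_{i_0}$ and $K$, while the $K$-linear piece $2K\cdot\Lambda_{i_0}(z)$ can be absorbed, as in Proposition \ref{dispersive lemma}, into the translated Fourier variables $c_j\mapsto c_j-2Krs$ appearing in the oscillatory integral representation of $v_{i_0}(s,-c/r)$. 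Integration by parts in the non-$K_{i_0}$ directions then yields $(p+1)$-fold decay in each shifted $c_j$ at no cost in powers of $K$, and combined with the standard $L^\infty$ dispersive estimate $\|v(s)\|_{L^\infty(\mathbb{R}^{2pn})}\lesssim \langle s\rangle^{-pn}\|(1-\Delta)^m \widehat{v}(0)\|_{L^2}$ for $m>pn/2$, this gives the required bound
\[
\sum_{c\neq 0}|I(r,c)|\lesssim r^{pn-1}\langle K\rangle^{-\ell}\prod_{i=1}^{2p+1}\|f_i\|_{X^{\ell+n+2,N}},
\]
at the cost of demanding the slightly stronger regularity/decay $f_i\in X^{\ell+n+2,4n+2}$ (or $X^{\ell+n+4,4n+3}$ in the critical case) stated in the theorem.

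With these two ingredients in hand, the asymptotic analysis of Section \ref{sec:asymp} is then applied mutatis mutandis. For $pn\geq 3$, splitting the $q$-sum into $c=0$ and $c\neq 0$ contributions, the former produces $\mathcal{I}(0)\sum_q q^{-2pn}S(q,0)=(\zeta(pn-1)/\zeta(pn))\mathcal{P}(W)$ up to $O(L^{-1})$ via the Lipschitz-in-$\rho$ analogue of Lemma \ref{lem:boundI} in dimension $2pn$, while the latter is $O(L^{-pn+2}\langle K\rangle^{-\ell})$, giving the stated bound on $\Delta(W)$. For the critical case $pn=2$ (which covers both $(n,p)=(1,2)$ and $(n,p)=(2,1)$), the same Abel summation argument using Lemma \ref{l:SLC_Omega_0} and the $\rho=L^{-1/3}$ cutoff produces the logarithmic normalization and defines the correction operator
\[
\mathcal{C}(W)=\Bigl(\tfrac{\gamma}{\zeta(2)}-\tfrac{\zeta'(2)}{\zeta(2)^2}\Bigr)\mathcal{I}(0)+\frac{1}{\zeta(2)}\int_0^1\frac{I(r,0)-\mathcal{I}(0)}{r}\,dr+\frac{1}{\zeta(2)}\sum_{\substack{c\neq 0\\\omega(c)=0}}\int_0^1\frac{I(r,c)}{r}\,dr,
\]
whose boundedness in $X^\ell$ follows directly from the pointwise $\langle K\rangle^{-\ell}$ estimates established in the previous step, and whose error term is $O(L^{-1/3+})$ exactly as in the proof of the second part of Theorem \ref{th:asymptotics}.
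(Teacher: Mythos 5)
Your proposal follows essentially the same route as the paper: reduce to the $p=1$ circle-method framework in dimension $d=2pn$ via the coordinate change \eqref{eq:coordp}, note that the arithmetic analysis (Lemmas \ref{c:trivial_sq_est}--\ref{l:SLC_Omega_not_0}) only depends on the algebraic form of $\omega$, and then handle the $K$-dependence of $W$ by localizing to $|K_{i_0}|\gtrsim|K|/(4p)$, splitting $\Omega_{2p+1}$ into a $K$-free piece $\omega_{i_0}$ plus a $K$-linear piece absorbed into the shifted frequencies $c_j-2Krs$, and integrating by parts only in non-$K_{i_0}$ directions. Your treatment of the dispersive estimate, the $r^{pn-1}$ gain, and the $pn=2$ correction term all match the paper's argument.
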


\end{document}